\newcommand{\kom}[1]{}
\renewcommand{\kom}[1]{{\bf [#1]}}
\theoremstyle{plain}
\newtheorem{theorem}{Theorem} [section]
\newtheorem{corollary}[theorem]{Corollary}
\newtheorem{lemma}[theorem]{Lemma}
\newtheorem{proposition}[theorem]{Proposition}
\newtheorem{example}[theorem]{Example}
\theoremstyle{definition}
\newtheorem{definition}[theorem]{Definition}
\newtheorem{remark}[theorem]{Remark}
\numberwithin{theorem}{section}
\numberwithin{equation}{section}
\newcommand{\R}{{\mathbb R}}
 \newcommand{\eps}{{\varepsilon}}
 \def\1{\raisebox{2pt}{\rm{$\chi$}}}
\newcommand{\Om}{\Omega}
\newcommand{\abs}[1]{\left|#1\right|}
\newcommand{\Rn}{\mathbb{R}^N}
\newcommand{\dist}{\operatorname{dist}}
\title[A bridge between convexity and quasiconvexity]{
A bridge between convexity and quasiconvexity
}
\author[Blanc]{Pablo Blanc}
\address{Departamento  de Matem{\'a}tica, FCEyN, Universidad de Buenos Aires, Pabellon I,
         Ciudad Universitaria (C1428BCW), Buenos Aires, Argentina.}
\email{pblanc@dm.uba.ar}
\author[Parviainen]{Mikko Parviainen}
\address{Department of Mathematics and Statistics, University of
Jyv\"askyl\"a, PO~Box~35, FI-40014 Jyv\"askyl\"a, Finland}
\email{mikko.j.parviainen@jyu.fi}
   \author[Rossi]{Julio Rossi}
\address{ Departamento  de Matem{\'a}tica, FCEyN, Universidad de Buenos Aires, Pabellon I,
         Ciudad Universitaria (C1428BCW), Buenos Aires, Argentina.}
\email{jrossi@dm.uba.ar}
\begin{document}

\keywords{Convex envelope, convexity,  quasiconvex envelope,  regularity of convex envelope, supporting hyperplane, viscosity solutions} \subjclass[2020]{26B25, 35J60, 49L25}

\begin{abstract} We introduce a notion of convexity with respect to a one-dimensional operator 
and with this notion find
a one-parameter family of different convexities that interpolates between
classical convexity and quasiconvexity.  
We show
that, for this interpolation family, the convex envelope of a continuous boundary datum in a strictly convex domain 
is continuous up to the boundary and is characterized as being the unique viscosity solution to the
Dirichlet problem in the domain for
a certain fully nonlinear partial differential equation that involves the associated operator. 
In addition we prove that the convex envelopes of a boundary datum constitute a one-parameter curve of functions that goes from the quasiconvex envelope to the convex envelope being continuous with respect to uniform convergence. 
Finally, we also
show some regularity results for the convex envelopes proving that there is 
an analogous to a supporting hyperplane at every point and that convex envelopes are
$C^1$ if the boundary data satisfies in particular $NV$-condition we introduce. 
\end{abstract}

\maketitle

   
\section{Introduction}
	
	The main goal of this paper is to build a bridge connecting the notions of convexity
and quasiconvexity for functions defined in the Euclidean space.
We begin by recalling the basic definitions of convex and quasiconvex functions.
Then, we introduce the notion of convexity with respect to an operator.
This notion allows us to build the bridge by presenting a one-parameter family of operators so that the notion of convexity with respect to them interpolates between convexity and quasiconvexity.

	\subsection{Classical convexity}  
	First, let us recall the usual notion of convexity.  Let $\Omega \subset \Rn$ be a convex domain. 
	A function $u : \overline{\Omega} \to  {\mathbb{R}}$ is said to be convex in $\overline{\Omega}$  if
	for any two points $x,y \in \overline{\Omega}$ it holds that
	\begin{equation} \label{convexo-usual}
	u(tx+(1-t)y) \leq tu(x) + (1-t) u(y), \qquad \text{ for all } t \in (0,1).
	\end{equation}
	We refer to \cite{Vel} for a general reference on convex structures.

	Notice that $v(t) = tu(x) + (1-t) u(y)$ is just the solution to the equation $v'' (t)=0$ 
	in the interval $(0,1)$ that verifies $v(1) = u(x) $ and $v(0) = u(y)$ at the endpoints.
	Therefore, one can rewrite \eqref{convexo-usual} as 
	\begin{equation} \label{convexo-usual.33}
	u(tx+(1-t)y) \leq v(t), \qquad \text{ for all } t \in (0,1),
	\end{equation}
	with $v(t)$ the solution to 
	$$
	\left\{
	\begin{array}{l}
\displaystyle v'' (t) =0, \qquad t \in (0,1), \\
\displaystyle v(0) = u(y), \\ 
\displaystyle v(1) = u(x).
\end{array}
\right.
	$$

	Given a boundary datum $g : \partial \Omega \to  {\mathbb{R}}$ one can define the convex envelope of
	$g$ inside $\Omega$ as the largest convex function that is below $g$ on $\partial \Omega$, that is, we take
	\begin{equation} \label{convex-envelope-usual}
	u^* (z) = \sup \Big\{v(z) : v \mbox{ is convex in $\overline{\Omega}$ and verifies } v|_{\partial \Omega} \leq g \Big\},
	\qquad z \in \Omega.
	\end{equation}
	When $\Omega$ is bounded and strictly convex and the boundary datum $g$ is continuous, the convex envelope $u^*$ is continuous in $\overline{\Omega}$, see \cite{BlancRossi}.
	Recall that $\Omega$ is strictly convex if $xt+y(1-t)\in \Omega$ for every $x,y\in\overline{\Omega}$ and $t\in (0,1)$.
	The convex envelope can be characterized as the
	unique solution (the equation has to be
	interpreted in viscosity sense) to
	\begin{equation} \label{convex-envelope-usual-eq}
	\left\{\begin{array}{ll}
	\displaystyle \lambda_1 (D^2 u) (z) := \inf_{|v|=1} \langle D^2 u (z) v, v \rangle = 0, \qquad & z\in  \Omega, \\
	u(z) = g (z),  \qquad & z\in  \partial \Omega,
	\end{array} \right.
		\end{equation}
		see \cite{OS,Ober}.
	Here $\lambda_1 (D^2 u)$ is the smallest eigenvalue of
	the Hessian matrix, $D^2u$. We also refer to \cite{BlancRossi,GR,HL1,KK,OS,Ober,Ober2} for extra information
	and applications of these results. 	
	For a fractional version of these ideas, see \cite{DpQR}.

	\subsection{Quasiconvexity}  
	A notion weaker than convexity is quasiconvexity. A function 
	$u\colon \overline{\Omega}\to \mathbb{R} $ is called quasiconvex 
	if for all $x,y\in \overline{\Omega}$ and any $t \in (0,1),$ 
	we have
	\begin{equation} \label{quasiconvexo-usual}
		u(t x+(1- t )y)\leq \max \Big\{ u(x),u(y)\Big\}.
	\end{equation}
	An alternative and more geometric way of defining a quasiconvex function 
	$u$ is to require that each sublevel set $S_{\lambda}(u)=\{z\in \overline{\Omega} \, : \, u(z)\leq \lambda\}$
	is a convex set in $\mathbb{R}^N$. 
	Quasiconvex functions have applications in mathematical analysis, 
	optimization, game theory, and economics. See, for example, \cite{CoSa,DiGuglielmo,Pearce,Ko,Sion}
	and \cite{10} and references therein for an overview.

	Now, going back to \eqref{quasiconvexo-usual}, we notice that $v(t) = \max \left\{ u(x),u(y)\right\}$ is the viscosity solution to the equation $|v' (t)|=0$ 
	in the interval $(0,1)$ with boundary conditions $v(1) = u(x) $ and $v(0) = u(y)$, see Remark~\ref{rem:max} below.
	Therefore, one can rewrite \eqref{quasiconvexo-usual} as 
	\begin{equation} \label{quasiconvexo-usual.33}
	u(tx+(1-t)y) \leq v(t), \qquad \text{ for all } t \in (0,1),
	\end{equation}
	with $v(t)$ the solution to 
	$$
	\left\{
	\begin{array}{l}
\displaystyle |v'(t)| =0, \qquad t \in (0,1), \\
\displaystyle v(0) = u(y), \\ 
\displaystyle v(1) = u(x).
\end{array}
\right.
	$$
	
	Associated with
	the quasiconvex envelope of a boundary datum $g:\partial \Omega \to \mathbb{R}$,
	that is defined as
		\begin{equation} \label{quasiconvex-envelope-usual}
	u^* (z) = \sup \Big\{v(z) : v \mbox{ is quasiconvex in $\overline{\Omega}$ and verifies } v|_{\partial \Omega} \leq g \Big\}, \quad  z \in \Omega,
	\end{equation}
	there is a partial differential equation. 
	 In fact, the quasiconvex envelope is characterized as the unique
	 quasiconvex viscosity solution to  
	\begin{equation}\label{ec-RN}
		 \min_{ \substack{|v|=1, \\
		\langle v,  D  u(x) \rangle =0}} 
		\langle D^2 u(x) v, v \rangle =0.
	\end{equation}
 This equation is studied in \cite{BGJ12a,BGJ12b,BGJ13}. 
	In particular, in
	\cite{BGJ13} it is proved that there is no uniqueness for general viscosity solutions, but nonetheless, there is 
	uniqueness among quasiconvex solutions.

\subsection{Convexity with respect to an operator}	
	Now, let us introduce a general definition of convexity that will be the key to build 
	a family of notions interpolating between convexity and quasiconvexity. 
	
		We fix a 1-dimensional operator $L=L(v'',v')$ and
	we propose the following definition for convexity with respect to the operator 
	$L$.

\begin{definition} \label{L-convex-defi}	
	A function $u :\overline{\Omega} \to  {\mathbb{R}}$ is said to be $L$-convex if
	for any two points $x,y \in \overline{\Omega}$ it holds that
	\begin{equation} \label{convexo-X}
	u(tx+(1-t)y) \leq v(t), \qquad \text{ for all } t \in (0,1),
	\end{equation}
	where $v$ is just the solution to 
	$$
	\left\{
	\begin{array}{l}
\displaystyle L (v) (t) =0, \qquad t \in (0,1), \\
\displaystyle v(0) = u(y), \\ 
v(1) = u(x).
\end{array}
\right.
	$$
	\end{definition}
	
Notice that this definition works well when we have solvability of the one-dimensional 
Dirichlet problem for the operator $L$ in the interval $(0,1)$.
Remark that  the usual notions of convexity and quasiconvexity 
fit into this definition. In fact, as we have mentioned, 
with the choice $L(v) = v''$ we recover the usual notion of convexity and with $L(v) = |v'|$ we obtain
quasiconvexity.

	\subsection{A bridge between convexity and quasiconvexity.} 
		We consider the one-parameter family of operators
	\begin{equation} \label{familia}
	L_\alpha (v) (t) = \alpha v'' (t) + (1-\alpha) |v' (t)|^2,
	\end{equation}
	with $0\leq \alpha \leq 1$ in Definition \ref{L-convex-defi}.
	Notice that for $\alpha=0$ we have quasiconvexity and for $\alpha=1$ we obtain convexity. 
	We call a function $u : \overline{\Omega} \to \mathbb{R}$ {\emph{$\alpha$-convex}} if it is $L_\alpha$-convex in the sense of 
	Definition \ref{L-convex-defi}.

	Notice that this operator $L_\alpha$ behaves well
	under the scaling $w(t) = v(Rt)$, in fact we have
	$$
	\begin{array}{l}
	\displaystyle 
	L_\alpha (w) (t) = \alpha w'' (t)+ (1-\alpha) |w' (t)|^2 \\[6pt]
	\qquad \qquad  = R^2  \alpha v'' (Rt) + (1-\alpha) |v' (Rt)|^2R^2 = R^2 
	L_\alpha (v) (R t).
	\end{array}
	$$
	This fact is the key to obtain the second part of
	our first theorem that gives a characterization of being $\alpha$-convex in $\overline{\Omega}$
	in terms of a PDE.
	
	\begin{theorem} \label{teo.1.intro} Let $\alpha \neq 0$. 
	Then, an $\alpha$-convex function $u:\overline{\Omega} \to \mathbb{R}$  is
	Lipschitz continuous in $\Omega$.
	Moreover, a function $u:\overline{\Omega} \to \mathbb{R}$ 
	is $\alpha$-convex if and only if it satisfies
	\begin{equation} \label{convex-envelope-X}
	\mathcal{L}_\alpha  u (z) := \inf_{
	\substack{x,y \in \overline{\Omega}  \\
		z=t_0 x+ (1-t_0)y} } L_\alpha (u) (t_0) \geq 0, \qquad z \in \Omega,
		\end{equation}
		in the viscosity sense.
		Here $L_\alpha (u) (t_0)$ stands for $L_\alpha$ applied to $u(tx+(1-t)y)$ as a function of 
		$t\in (0,1)$ at the point $t_0$.
		\end{theorem}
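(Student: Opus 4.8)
The plan is to reduce $\alpha$-convexity to ordinary convexity by a one-dimensional Cole--Hopf change of variables. For $\alpha\in(0,1)$ put $\gamma=(1-\alpha)/\alpha>0$ and solve the defining equation $L_\alpha v=\alpha v''+(1-\alpha)(v')^2=0$ on $(0,1)$: writing $v=\frac1\gamma\ln w_0$ one computes $L_\alpha(v)=\frac{\alpha^2}{1-\alpha}\,w_0''/w_0$, so $L_\alpha v=0$ exactly when $e^{\gamma v}$ is affine. Hence the comparison function of Definition~\ref{L-convex-defi} with $v(0)=u(y)$, $v(1)=u(x)$ is given by $e^{\gamma v(t)}=(1-t)e^{\gamma u(y)}+t\,e^{\gamma u(x)}$, and applying the increasing map $s\mapsto e^{\gamma s}$ to the inequality $u(tx+(1-t)y)\le v(t)$ turns it into the usual convexity inequality for $w:=e^{\gamma u}$. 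Thus $u$ is $\alpha$-convex if and only if $w=e^{\gamma u}$ is convex (the case $\alpha=1$ being ordinary convexity). The Lipschitz assertion is then immediate, since a finite convex function on the convex open set $\Omega$ is locally Lipschitz there; as $w$ is moreover strictly positive, $u=\frac1\gamma\ln w$ is a locally Lipschitz composition, which in particular gives the continuity needed for the viscosity formulation below.

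For the characterization I would first record the scaling reduction suggested by the identity $L_\alpha(v(R\,\cdot))(t)=R^2L_\alpha(v)(Rt)$. For a $C^2$ test function $\varphi$, the operator $L_\alpha$ applied to $t\mapsto\varphi(tx+(1-t)y)$ equals $\alpha\langle D^2\varphi(z)\xi,\xi\rangle+(1-\alpha)\langle D\varphi(z),\xi\rangle^2$ with $\xi=x-y$, which is homogeneous of degree two in $\xi$; letting $x,y$ shrink toward $z$ shows that the infimum over admissible pairs is nonnegative precisely when it is nonnegative in every direction, namely
\[
\mathcal L_\alpha\varphi(z)\ge 0\iff \inf_{|e|=1}\Big(\alpha\langle D^2\varphi(z)e,e\rangle+(1-\alpha)\langle D\varphi(z),e\rangle^2\Big)\ge 0 .
\]
A chain-rule computation gives $\langle D^2e^{\gamma\varphi}\,e,e\rangle=\gamma e^{\gamma\varphi}\big(\langle D^2\varphi\,e,e\rangle+\gamma\langle D\varphi,e\rangle^2\big)$, and since $\alpha\gamma=1-\alpha$ the right-hand condition is, after multiplication by $\alpha>0$, exactly $\lambda_1(D^2e^{\gamma\varphi})\ge 0$.

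It remains to transfer the viscosity statements across the change of variables. As $s\mapsto e^{\gamma s}$ is a smooth increasing diffeomorphism, a smooth $\varphi$ touches $u$ from above at $z_0$ if and only if $e^{\gamma\varphi}$ touches $w=e^{\gamma u}$ from above at $z_0$; together with the previous display this shows that ``$\mathcal L_\alpha u\ge 0$ in the viscosity sense'' is equivalent to ``$\lambda_1(D^2w)\ge 0$ in the viscosity sense''. By the classical viscosity characterization of convexity (a continuous function is convex iff every $C^2$ function touching it from above has nonnegative Hessian), the latter holds exactly when $w$ is convex, i.e. when $u$ is $\alpha$-convex, which closes the equivalence.

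The step I expect to require the most care is the viscosity transfer at points where $u$ is only Lipschitz: one must verify that the correspondence of touching test functions is exact and that the directional reduction of the infimum survives in the non-smooth setting. A self-contained alternative that stays within the $L_\alpha$ formalism avoids the transform: for the forward implication one applies $\alpha$-convexity on the short segments $z_0\pm se_0$ and lets $s\to0$ to obtain the directional inequality on a touching $\varphi$, while for the converse one restricts the subsolution property to the segment and compares $u$ with the explicit solution $v$. In the converse the genuinely delicate point is the standard subtlety that touching from above along a line need not arise from touching from above in $\Rn$, which one handles by adding a large transverse penalization $M\,|\cdot|^2$ to the test function.
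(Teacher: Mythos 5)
Your Cole--Hopf reduction is correct and is genuinely different from the paper's argument. The identity $L_\alpha v=\tfrac{\alpha^2}{1-\alpha}\,(e^{\gamma v})''e^{-\gamma v}$ with $\gamma=(1-\alpha)/\alpha$ does match the explicit solution of Lemma~\ref{lemma-L-1d}, so $\alpha$-convexity of $u$ is exactly classical convexity of $w=e^{\gamma u}$; this makes the interior Lipschitz bound immediate (indeed without the boundedness hypothesis the paper carries in Theorem~\ref{teo.cont}) and turns the PDE characterization into the known viscosity characterization of convexity via $\lambda_1(D^2w)\ge 0$, after the chain-rule computation you give. The paper instead works intrinsically with $L_\alpha$: Lipschitz continuity comes from a cone barrier built from the explicit one-dimensional solution extended past the endpoints, the forward implication from the strong comparison principle of Lemma~\ref{lem:1dcomp}, and the converse from the strict supersolutions $L_\alpha w=-\eta^2$ of Lemma~\ref{lem:1d-eta}; that machinery is reused later and would survive for operators $L$ that admit a comparison principle but no closed-form linearization, which is what the intrinsic route buys. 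The one point where your write-up needs tightening is the viscosity transfer: the theorem's ``viscosity sense'' is Definition~\ref{defi-visc}, whose subsolution property is tested with \emph{one-dimensional} functions along segments, while your main chain invokes the $N$-dimensional characterization of convexity; the two notions are only shown equivalent in the paper a posteriori (Remark~\ref{rem:conincide}, via uniqueness). You flag exactly this and propose the transverse quadratic penalization, but the cleaner repair inside your framework is to run the correspondence $\phi\leftrightarrow e^{\gamma\phi}$ segment by segment, where it reduces to the elementary fact that a one-variable upper semicontinuous function is convex iff $h''\ge 0$ against one-dimensional tests; with that adjustment (and noting, as the paper also implicitly does, that the converse implication only recovers the defining inequality for interior endpoints $x,y$), your proof closes.
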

		
		In the previous theorem the inequality has to be
	interpreted in the viscosity sense  touching with $1-$dimensional test functions as explained in Definition \ref{defi-visc}.

	Observe that the result does not hold for $\alpha =0$ (the quasiconvex case) since
	there are quasiconvex functions that are discontinuous. Moreover,
	as was pointed out in \cite{BJ}, it holds that
		$$
		u(z_1,z_2) = -(z_1)^4
		$$ 
		is a solution to the associated PDE, \eqref{ec-RN}, that is not quasiconvex.

In Section~\ref{sect-L} we prove Theorem~\ref{teo.1.intro} and obtain other results concerning $\alpha$-convex functions.
Among them we prove an analogous to the well known fact that the graph of a convex function can be touched from below with a supporting hyperplane in the context of 
$\alpha$-convex functions, see Theorem~\ref{chess}.  

We also establish in Proposition \ref{prop.compos} that $\alpha$-convex functions can be decomposed by suitable monotone functions. This is related to the well known  fact that convexity and quasiconvexity differ in how they behave under monotone transformations.
 
In Section \ref{sec:basic}, we derive some basic properties of $\alpha$-convex functions. We observe that notions of convexity associated with 
	$L_\alpha$ are stronger as $\alpha$ increases. This is naturally consistent with the fact that convexity (the concept for $\alpha=1$) implies quasiconvexity
	($\alpha=0$).
		We also observe that the supremum of $\alpha$-convex functions is $\alpha$-convex, and that $\alpha$-convex function attains its infimum under suitable conditions.

	\subsection{The $\alpha$-convex envelope} 
	Using the definition of $\alpha$-convexity one can define the $\alpha$-convex envelope of a boundary datum $g$.

\begin{definition}
\label{def:convex-env}
Let $g : \partial \Omega \to  {\mathbb{R}}$ be a continuous function.
Then, the $\alpha$-convex envelope of $g$ in $\Om$ is given by
	\begin{equation} \label{convex-envelope-X.44}
	u_\alpha^* (z) = \sup \Big\{v(z) : v \mbox{ is $\alpha$-convex and verifies } v|_{\partial \Omega} \leq g 			\Big\}, \quad z\in \Omega.
		\end{equation} 
\end{definition}
	Notice that the definition of the convex envelope 
	makes sense even when $g$ is not continuous, for example
	we can ask for $g$ bounded below, or when the domain is not strictly convex.  However, we aim at proving regularity results that require a continuity assumption and the strict convexity of the domain in order to have a convex envelope that attains the boundary condition with continuity. Notice that for a non-strictly convex domain, the convex envelope may be discontinuous even when $g$ is continuous (see Example 12 in \cite{BlancRossi}).

	In an interval $(a,b)$, the $\alpha$-convex envelope is explicit, it
is just the solution to the equation $L_\alpha (u)=0$ with boundary data $g(a)$, $g(b)$. Thus we assume $N\geq 2$ in what follows.
	
	\begin{theorem} \label{thm:cont} 
	Assume that $\Omega\subset\R^N$ is a strictly convex bounded domain, $N\geq 2$, and that $g:\partial \Omega \to \mathbb{R}$ is continuous. Then, the family of $\alpha$-convex envelopes $\{u^*_\alpha\}_{\alpha\in [0,1]}$ 
	is equicontinuous in $\overline{\Omega}$.
		\end{theorem}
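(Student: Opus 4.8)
The plan is to produce a modulus of continuity for $u_\alpha^*$ that is independent of $\alpha$, assembled from the modulus $\omega_g$ of $g$ and the geometry of $\Omega$. Two $\alpha$-uniform facts form the backbone. First, affine functions are $\alpha$-convex for every $\alpha\in[0,1]$: along a segment the $L_\alpha$-interpolant between endpoint values $a,b$ is $v(t)=c\log\!\big((1-t)e^{a/c}+te^{b/c}\big)$ with $c=\alpha/(1-\alpha)$, and concavity of the logarithm (Jensen) gives $v(t)\ge (1-t)a+tb$, so the linear interpolation associated with an affine function satisfies \eqref{convexo-X}. Second, the same formula yields $v(t)\le\max\{a,b\}$ for every $\alpha$. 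Since constants are $\alpha$-convex and $u_\alpha^*$ is itself $\alpha$-convex (the supremum of $\alpha$-convex functions is $\alpha$-convex), applying the second fact to a chord through an interior point gives the $\alpha$-uniform bound $\min_{\partial\Omega}g\le u_\alpha^*\le\max_{\partial\Omega}g$.

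Next I would establish an $\alpha$-uniform modulus at the boundary. For the lower bound, fix $x_0\in\partial\Omega$; strict convexity provides a supporting hyperplane meeting $\overline\Omega$ only at $x_0$, and using uniform continuity of $g$ together with a compactness-based uniform modulus of strict convexity one builds an affine $\ell$ with $\ell(x_0)\ge g(x_0)-\eps$, slope bounded uniformly in $x_0$, and $\ell\le g$ on $\partial\Omega$. By the first backbone fact $\ell$ is $\alpha$-convex, hence $\ell\le u_\alpha^*$ and $u_\alpha^*(z)\ge g(x_0)-\eps-K\,|z-x_0|$ uniformly in $\alpha$. For the upper bound, given $z$ near $\partial\Omega$ with nearest boundary point $x_0$ at distance $d$, take the chord through $z$ parallel to the supporting hyperplane at $x_0$; strict convexity and compactness force its endpoints $P,Q$ to lie within $\rho(d)$ of $x_0$, with $\rho(d)\to0$. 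The second backbone fact gives $u_\alpha^*(z)\le\max\{g(P),g(Q)\}\le g(x_0)+\omega_g(\rho(d))$, again uniformly in $\alpha$. Combining the two, and using that the nearest-point projection onto $\partial\Omega$ is well behaved, $\{u_\alpha^*\}$ is equicontinuous on a fixed neighborhood of $\partial\Omega$, with a modulus $\omega_\partial$ independent of $\alpha$.

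It remains to propagate this to the interior uniformly in $\alpha$, and this is the step I expect to be the main obstacle. The key structural observation is the scaling identity $L_\alpha(w)(t)=R^2L_\alpha(v)(Rt)$ noted before Theorem~\ref{teo.1.intro}: the sign of $L_\alpha$ along a segment does not depend on its length, so condition \eqref{convex-envelope-X} is local and translation invariant. I would exploit this through a translation comparison: for a small vector $h$, compare $u_\alpha^*(\cdot)$ with $u_\alpha^*(\cdot+h)-\sigma$ on the overlap $\Omega\cap(\Omega-h)$, where $\sigma=\sigma(|h|)$ is prescribed by the boundary modulus $\omega_\partial$ so that the ordering already holds on the two boundary pieces of $\partial\big(\Omega\cap(\Omega-h)\big)$ (one contained in $\partial\Omega$, the other in $\partial\Omega-h$). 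Interchanging $h$ and $-h$ would then give $|u_\alpha^*(z+h)-u_\alpha^*(z)|\le\sigma(|h|)$ with $\sigma$ independent of $\alpha$, finishing the equicontinuity.

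The difficulty is to make this comparison rigorous uniformly as $\alpha\to0$. Two $\alpha$-convex functions are both subsolutions of \eqref{convex-envelope-X}, so one cannot directly invoke a sub/supersolution comparison; moreover the interior Lipschitz bound of Theorem~\ref{teo.1.intro} degenerates as $\alpha\to0$, and in the quasiconvex regime individual competitors need not even be continuous. I would therefore route the argument through the maximality of the envelope rather than through $\alpha$-dependent gradient estimates: any $\alpha$-convex function lying below $g$ on $\partial\Omega$ lies below $u_\alpha^*$, so it suffices to manufacture from the translate $u_\alpha^*(\cdot+h)-\sigma$ an $\alpha$-convex competitor on all of $\overline\Omega$ that stays below $g$ on $\partial\Omega$ and dominates the translate on the overlap. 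Filling the thin boundary layer $\overline\Omega\setminus(\Omega-h)$ while preserving $\alpha$-convexity and the boundary inequality, with the slack controlled precisely by $\omega_\partial$ at scale $|h|$, is the technical heart of the proof.
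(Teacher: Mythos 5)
Your overall architecture matches the paper's: an $\alpha$-uniform boundary modulus obtained from affine barriers (affine functions are convex, hence $\alpha$-convex for every $\alpha$, and the $L_\alpha$-interpolant is monotone between its endpoint values, so $\alpha$-convexity implies quasiconvexity; this sandwiches $u_\alpha^*$ between $u_1^*$ and $u_0^*$, which is essentially what the paper does), followed by a translation comparison to propagate the boundary modulus inward. The boundary part of your argument is sound.

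The genuine gap is the interior step, which you yourself flag as unresolved. You propose to justify the comparison of $u_\alpha^*(\cdot)$ with $u_\alpha^*(\cdot+h)-\sigma$ by extending the translate to an $\alpha$-convex competitor on all of $\overline\Omega$ lying below $g$ on $\partial\Omega$, and you correctly observe that ``filling the thin boundary layer'' while preserving $\alpha$-convexity is where this breaks down; there is no obvious way to perform such an extension, and the proof is not complete without it. The paper's resolution avoids the extension entirely via a localization lemma (Lemma~\ref{starstar}): if $V\subset\Omega$ is strictly convex, then $u_\alpha^*|_V$ coincides with the $\alpha$-convex envelope \emph{in $V$} of its own trace on $\partial V$. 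One then takes $V=\tilde\Omega=\{z:\dist(z,\partial\Omega)>\delta/2\}$; the translate $u_\alpha^*(\cdot-x+y)-\eps$ is $\alpha$-convex on $\tilde\Omega$ and, by the boundary-layer estimate, lies below $u_\alpha^*$ on $\partial\tilde\Omega$, hence below the envelope of $u_\alpha^*|_{\partial\tilde\Omega}$ in $\tilde\Omega$, which is $u_\alpha^*$ itself. This is exactly the ``maximality of the envelope'' route you wanted, but applied on the subdomain rather than on $\Omega$, so no competitor need be manufactured on the missing layer. The localization lemma itself is proved by gluing: the function equal to the inner envelope on $V$ and to $u_\alpha^*$ outside is shown to be $\alpha$-convex, hence dominated by $u_\alpha^*$. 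Supplying this lemma (or an equivalent localization statement) is what is needed to close your proof.
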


	  	Observe that the operator $\mathcal{L}_\alpha$ that appears in Theorem \ref{teo.1.intro} can be written as 
	 \begin{equation} \label{form-alter}
	 \begin{array}{l}
\displaystyle \mathcal{L}_\alpha  u (z) := \inf_{
	\substack{x,y \in \overline{\Omega}  \\
		z=t_0 x+ (1-t_0)y} } L_\alpha (u) (t_0)  \\[6pt]
\qquad \qquad \displaystyle =  \inf_{|v|=1}\Big\{ 
		\alpha \langle D^2 u(z) v, v \rangle + (1-\alpha)  |\langle D u(z), v \rangle |^2
		\Big\} , \qquad z \in \Omega.
		\end{array}
		\end{equation}
	We use this formulation here since it streamlines the exposition.	
	\begin{theorem} \label{teo.2.intro} 
	Let $\alpha\neq 0$. Assume that $\Omega$ is a strictly convex bounded domain and
	that $g:\partial \Omega \to \mathbb{R}$ is continuous. Then, the $\alpha$-convex envelope
	is continuous in $\overline{\Omega}$ and 
	is characterized as the unique viscosity solution to 
	\begin{equation} \label{eq.main.intro}
	\left\{
	\begin{array}{ll} \! \! 
	\displaystyle \mathcal{L}_\alpha u (z) \! = \!\!
	 \inf_{|v|=1} \! \! 
		\Big\{\alpha \langle D^2 u(z) v, v \rangle \! + \! (1-\alpha)  |\langle D u(z) ,v \rangle |^2 \Big\} \! = \!  0,  & z \!  \in \!  \Omega, \\[8pt]
	\! \!  u (z) = g (z),  & z \!  \in \!  \partial \Omega.
	\end{array}
	\right.
		\end{equation}
		\end{theorem}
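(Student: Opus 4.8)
The plan is to reduce the whole statement to the already established convex-envelope case $\alpha=1$ (for which continuity, the characterization through \eqref{convex-envelope-usual-eq}, and uniqueness are recorded in \cite{Ober,OS,BlancRossi}) by means of the exponential change of variables underlying Proposition~\ref{prop.compos}. Fix $\alpha\in(0,1)$, set $c=(1-\alpha)/\alpha>0$ and $\Phi(s)=e^{cs}$. The starting point is the one-dimensional identity, for $C^2$ functions $\phi$ of a single variable,
\[
(e^{c\phi})''=\tfrac{c}{\alpha}\,L_\alpha(\phi)\,e^{c\phi},
\]
which, since $\tfrac{c}{\alpha}e^{c\phi}>0$, gives $L_\alpha(\phi)\ge 0\iff e^{c\phi}$ convex. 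Reading this along every segment and using that $\Phi$ is a continuous increasing bijection of $\R$ onto $(0,\infty)$, I would conclude that $u$ is $\alpha$-convex if and only if $w:=e^{cu}$ is convex in the classical sense.

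Next I would transfer the envelope itself. As $\Phi$ is increasing and continuous it commutes with suprema, and the admissibility constraint transforms as $v|_{\partial\Omega}\le g\iff e^{cv}\le e^{cg}$; hence $e^{c\,u^*_\alpha}=w^*$, where $w^*$ is the classical convex envelope of the continuous, strictly positive datum $\tilde g:=e^{cg}$. Because $\tilde g\ge m:=\min_{\partial\Omega}\tilde g>0$ and the constant $m$ is an admissible convex competitor, one has $w^*\ge m>0$ on $\overline\Omega$, so $u^*_\alpha=\tfrac1c\log w^*$ is well defined. Continuity of $u^*_\alpha$ up to $\overline\Omega$ and the boundary attainment $u^*_\alpha|_{\partial\Omega}=g$ then follow immediately from the corresponding classical facts for $w^*$ on the strictly convex bounded domain $\Omega$ (this is also the content that Theorem~\ref{thm:cont} supplies).

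Finally I would transfer the equation and the uniqueness. For smooth $\phi$ and every unit $v$ a direct computation yields
\[
\langle D^2 e^{c\phi}\,v,v\rangle=\tfrac{c\,e^{c\phi}}{\alpha}\Big(\alpha\langle D^2\phi\,v,v\rangle+(1-\alpha)|\langle D\phi,v\rangle|^2\Big),
\]
whence $\lambda_1(D^2 e^{c\phi})=\tfrac{c\,e^{c\phi}}{\alpha}\,\mathcal{L}_\alpha\phi$ in the directional form \eqref{form-alter}. Since the prefactor is strictly positive and $\Phi$ is increasing, a test function touches $u$ from above (resp.\ below) exactly when $e^{c\phi}$ touches $w=e^{cu}$ from above (resp.\ below), and the two viscosity inequalities carry the same sign. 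Thus $u$ solves \eqref{eq.main.intro} with $u|_{\partial\Omega}=g$ if and only if the positive function $w=e^{cu}$ is a viscosity solution of $\lambda_1(D^2w)=0$ with $w|_{\partial\Omega}=\tilde g$. Applying this to $u^*_\alpha$ (whose transform $w^*$ solves the convex-envelope equation) gives existence, and the classical uniqueness for $\lambda_1(D^2w)=0$ on strictly convex domains forces any solution $u$ to satisfy $e^{cu}=w^*$, i.e.\ $u=u^*_\alpha$.

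The step I expect to be the main obstacle is the rigorous passage of the change of variables from smooth functions to viscosity solutions: one must verify that $\phi\leftrightarrow e^{c\phi}$ is a bijection between admissible test functions compatible with the one-dimensional touching of Definition~\ref{defi-visc} and its directional reformulation \eqref{form-alter}, and that the strict lower bound $w^*\ge m>0$ is exactly what legitimizes inverting $\log$ globally. Once this is secured, the degeneracy of $\mathcal{L}_\alpha$ — the very feature that destroys uniqueness when $\alpha=0$ — becomes harmless, because the strictly positive conversion factor $c\,e^{cu}/\alpha$ (finite precisely because $\alpha\neq0$) transports the classical comparison and uniqueness without change.
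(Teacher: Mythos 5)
Your reduction is correct, and it is genuinely different from the route the paper takes. The identity $(e^{K_\alpha\phi})''=\tfrac{K_\alpha}{\alpha}e^{K_\alpha\phi}L_\alpha(\phi)$, together with the fact that $e^{K_\alpha v}$ is the affine interpolant when $L_\alpha v=0$ (visible from \eqref{eq:v}), does show that $u$ is $\alpha$-convex iff $e^{K_\alpha u}$ is convex, that $e^{K_\alpha u^*_\alpha}$ is the classical convex envelope of $e^{K_\alpha g}$ (after replacing a convex competitor $w$ by $\max\{w,\min_{\partial\Omega}e^{K_\alpha g}\}$ to stay in the positive class), and that the viscosity formulations transfer with the strictly positive factor $K_\alpha e^{K_\alpha\phi}/\alpha$; continuity up to $\partial\Omega$, the equation, and uniqueness then all come from the known $\alpha=1$ theory in \cite{BlancRossi,OS,Ober}, and the test-function issue you flag is handled by the standard local inversion $\psi\mapsto\tfrac1{K_\alpha}\log\psi$ near a touching point (where $\psi>0$), combined with Remark~\ref{rem:conincide}. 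The paper instead proves everything intrinsically: equicontinuity of the whole family $\{u^*_\alpha\}$ via boundary barriers and a translation argument (Theorem~\ref{thm:cont} and Lemma~\ref{starstar}), the subsolution property from Lemma~\ref{lema.sup} and Theorem~\ref{teo.1.intro}, the supersolution property by a direct bump construction using Lemma~\ref{lem:1d-eta}, and uniqueness via a perturbation to strict subsolutions (Lemma~\ref{lema.perturbacion.estrcto}) and the theorem on sums (Lemma~\ref{teo.compar}). Your approach is shorter and conceptually transparent — it makes plain why uniqueness survives exactly for $\alpha\neq 0$, since $K_\alpha=(1-\alpha)/\alpha$ blows up as $\alpha\to 0$ and the substitution degenerates — but it is tied to the explicit integrability of this particular family $L_\alpha$ and imports the $\alpha=1$ uniqueness as a black box, whereas the paper's machinery is what also delivers the uniform-in-$\alpha$ equicontinuity needed for Theorem~\ref{th.continu} (your transform gives moduli that degenerate as $\alpha\to 0$) and the comparison principle reused in later sections. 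For Theorem~\ref{teo.2.intro} taken in isolation, your argument is a valid alternative proof.
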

	It is also worth noting that the definition of viscosity solutions in the theorem above is nonstandard, and uses one dimensional test functions for subsolutions. However, we later observe in Remark~\ref{rem:conincide} that such a definition coincides with the usual definition with $N$-dimensional test functions.
				
Recall that, as we have already mentioned, for the quasiconvex envelope
(that corresponds to $\alpha=0$), the PDE does not have a unique solution. 
In this case it is known that, for
	$\Omega$ a strictly convex bounded domain and
	$g:\partial \Omega \to \mathbb{R}$ a continuous function,
	then, the quasiconvex envelope
	is characterized as the unique quasiconvex viscosity solution to 
		\begin{align*}
\begin{cases}
	\displaystyle \min_{ \substack{|v|=1, \\
		\langle v,  D  u(x) \rangle =0}} 
		\langle D^2 u(x) v, v \rangle =0,  & z \in \Omega, \\
		 u (z) = g (z),  & z \in \partial \Omega.
\end{cases}
\end{align*}
See Theorem~5.5 in \cite{BGJ13}.
Moreover, a comparison principle holds (Corollary 5.6 in \cite{BGJ13}) if subsolutions are required to be quasiconvex.

	Our next result says that the $\alpha$-convex envelopes of a fixed boundary continuous datum $g:\partial \Omega \to \mathbb{R}$
	inside $\Omega$ is a continuous one-parameter curve that goes from the quasiconvex envelope 
	at $\alpha=0$ to the usual convex envelope at $\alpha =1$. 
	Therefore, this notion of $\alpha$-convexity allows us to construct a 
	continuous bridge that has quasiconvexity and convexity at its endpoints.

\begin{theorem} \label{th.continu} 
	Assume that $\Omega\subset\R^N$ is a strictly convex bounded domain, $N\geq 2$ and that $g:\partial \Omega \to \mathbb{R}$ is continuous.
Then, the map $$\alpha \mapsto u^*_\alpha$$ is non-increasing, and
continuous with respect to the $\sup$-norm, that is, as $\alpha \to \hat \alpha\in [0,1]$, it holds that 
$$
u^*_\alpha\to u^*_{\hat \alpha}
$$
 uniformly in $\overline{\Omega}$. 
\end{theorem}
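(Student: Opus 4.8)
The plan is to deduce the monotonicity directly from the ordering of the notions of $\alpha$-convexity, and to obtain the continuity by upgrading the equicontinuity of Theorem~\ref{thm:cont} to uniform convergence through a stability (closedness) argument for viscosity solutions, combined with the uniqueness from Theorem~\ref{teo.2.intro}. For the monotonicity I would use the comparison of the operators from Section~\ref{sec:basic}: if $\alpha_1<\alpha_2$, then every $\alpha_2$-convex function is $\alpha_1$-convex, so the admissible class in the supremum \eqref{convex-envelope-X.44} defining $u^*_\alpha$ shrinks as $\alpha$ increases; taking suprema over nested families gives $u^*_{\alpha_1}\geq u^*_{\alpha_2}$, that is, $\alpha\mapsto u^*_\alpha$ is non-increasing.

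For the continuity, the envelopes are uniformly bounded (between $\min_{\partial\Omega}g$ and $\max_{\partial\Omega}g$), so Theorem~\ref{thm:cont} together with the Arzel\`a--Ascoli theorem makes $\{u^*_\alpha\}_{\alpha\in[0,1]}$ relatively compact in $C(\overline{\Omega})$. Thus it suffices to show that any uniform limit $u$ of a sequence $u^*_{\alpha_n}$ with $\alpha_n\to\hat\alpha$ equals $u^*_{\hat\alpha}$: once all subsequential limits coincide, relative compactness forces the whole family to converge. The uniform equicontinuity up to $\partial\Omega$ guarantees that such a limit inherits the boundary condition $u=g$ on $\partial\Omega$.

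When $\hat\alpha\neq0$ I would identify the limit by stability of viscosity solutions. Written in the form \eqref{form-alter}, the operators $\mathcal{L}_\alpha$ depend continuously on $\alpha$ and, for $\hat\alpha\neq0$, converge locally uniformly in $(Du,D^2u)$ to $\mathcal{L}_{\hat\alpha}$. Since each $u^*_{\alpha_n}$ solves $\mathcal{L}_{\alpha_n}u^*_{\alpha_n}=0$ (Theorem~\ref{teo.2.intro}, with the one-dimensional and $N$-dimensional notions of solution coinciding by Remark~\ref{rem:conincide}), the uniform limit $u$ solves $\mathcal{L}_{\hat\alpha}u=0$ with datum $g$, and the uniqueness in Theorem~\ref{teo.2.intro} forces $u=u^*_{\hat\alpha}$.

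The main obstacle is the degenerate endpoint $\hat\alpha=0$, where the limit equation $\mathcal{L}_0u=\inf_{|v|=1}|\langle Du,v\rangle|^2=0$ holds trivially for $N\geq2$ and does not characterize the quasiconvex envelope, so naive stability fails. I would handle it in two steps. First, $u$ is quasiconvex as a uniform limit of the quasiconvex functions $u^*_{\alpha_n}$ (the inequality \eqref{quasiconvexo-usual} passes to the limit), and by monotonicity $u\leq u^*_0$. Second, I would show that $u$ is in fact a viscosity supersolution of the genuine quasiconvex equation \eqref{ec-RN}: when $u^*_{\alpha_n}$ is touched from below by a test function $\phi$, the term $(1-\alpha_n)|\langle D\phi,v\rangle|^2$ forces the minimizing directions to concentrate on $\{v:\langle v,D\phi\rangle=0\}$ as $\alpha_n\downarrow0$, so that the supersolution inequality $\mathcal{L}_{\alpha_n}\phi\leq0$ degenerates into $\min_{|v|=1,\,\langle v,D\phi\rangle=0}\langle D^2\phi\,v,v\rangle\leq0$. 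Then $u$ is a quasiconvex supersolution and $u^*_0$ a quasiconvex solution of \eqref{ec-RN} with the same datum $g$, so the comparison principle among quasiconvex solutions (Corollary~5.6 in \cite{BGJ13}) gives $u^*_0\leq u$; together with $u\leq u^*_0$ this yields $u=u^*_0$. I expect this degenerate-limit supersolution analysis at $\alpha=0$ to be the hardest technical point.
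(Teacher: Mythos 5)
Your proposal is correct and follows essentially the same route as the paper: monotonicity from Proposition~\ref{prop:bigger-alpha-stronger} and the nested admissible classes, compactness via Theorem~\ref{thm:cont} and Arzel\`a--Ascoli, identification of all subsequential limits, and at $\hat\alpha=0$ the same degenerate-limit supersolution argument combined with the quasiconvex comparison principle of \cite{BGJ13}. The only (harmless) organizational difference is that for $\hat\alpha\neq 0$ you invoke full stability of viscosity solutions plus uniqueness in one stroke, whereas the paper splits into increasing and decreasing sequences, handling the former elementarily from the supremum definition of the envelope and the latter by proving only the supersolution property of the limit and applying the comparison Lemma~\ref{teo.compar}.
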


 Observe that, in particular, we have the convex envelope
at one end
$$
u^*_\alpha \to u_1^*, \qquad \mbox{ as } \alpha \to 1,
$$
and the quasiconvex envelope at other end of the curve,
$$
u^*_\alpha \to u_0^*, \qquad \mbox{ as }\alpha \to 0.
$$

	We study $\alpha$-convex envelopes in Section \ref{sect-envelope}. We prove that the envelope is a solution
to the PDE; in Section \ref{sect-bridge} we prove Theorem \ref{th.continu}.

Finally, in the last section, Section \ref{sect-regul},
we prove a $C^1$ regularity result for the 
$\alpha-$convex envelope for $\alpha\neq 0$.  
This result needs a technical assumption on the boundary datum that
we call NV (see Definition \ref{NV-property} in Section \ref{sect-regul}). 
The idea behind this condition is to prevent a wedge-like behavior propagating from the boundary inside the domain. We observe that when $\partial \Omega$
is $C^1$ and $g$ is $C^1$ condition NV holds, but more general situations are possible.

Our regularity result reads as follows.

\begin{theorem} \label{teo.C1.intro} Let $\alpha\neq 0$.
If $\Omega$ is strictly convex and the boundary data $g$ is continuous and 
satisfies NV, then the $\alpha$-convex envelope $u^*_\alpha$ is $C^1(\Omega)$.
\end{theorem}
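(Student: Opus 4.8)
The plan is to reduce the statement to the $C^1$ regularity of a classical convex envelope by means of the monotone decomposition of Proposition~\ref{prop.compos}, and then to establish differentiability at every interior point, from which $C^1$ regularity follows automatically once convexity has been recovered.

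First I would set $\beta=\frac{1-\alpha}{\alpha}>0$ and $w=e^{\beta u^*_\alpha}$. By Proposition~\ref{prop.compos} the function $w$ is convex, and since $s\mapsto e^{\beta s}$ is an increasing smooth bijection onto $(0,\infty)$, the constraint $v|_{\partial\Omega}\le g$ is equivalent to $e^{\beta v}|_{\partial\Omega}\le e^{\beta g}$, so the supremum defining $u^*_\alpha$ transforms into the supremum defining the classical convex envelope of $e^{\beta g}$. Because $e^{\beta g}$ is continuous and strictly positive on the compact set $\partial\Omega$, its convex envelope stays bounded below by a positive constant; hence $w$ equals the convex envelope $W^*:=(e^{\beta g})^*$, with $W^*\ge c>0$ on $\overline\Omega$, and $u^*_\alpha=\frac1\beta\log W^*$. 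Since $\log$ is smooth and $W^*$ is bounded away from $0$, we have $u^*_\alpha\in C^1(\Omega)$ if and only if $W^*\in C^1(\Omega)$, so it suffices to prove $W^*\in C^1(\Omega)$. I would also verify that the $NV$-condition for $g$ transfers to $e^{\beta g}$ under this smooth increasing change of variables, which is immediate since one-sided tangencies and the geometry of sublevel sets are preserved.

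Next I would reduce $C^1$ regularity to pointwise differentiability, using the standard fact that a convex function differentiable at every point of an open set is automatically $C^1$ there, because pointwise differentiability forces continuity of the gradient for convex functions. Thus everything comes down to showing that $W^*$ is differentiable at each $z_0\in\Omega$, equivalently that it admits a \emph{unique} supporting hyperplane there. Existence of at least one supporting hyperplane is the translation of the supporting object furnished by Theorem~\ref{chess}. For uniqueness I would use the structure of the convex envelope: since $W^*$ solves $\lambda_1(D^2W^*)=0$ in the viscosity sense and $\Omega$ is strictly convex, through $z_0$ there passes a nondegenerate segment (or, more generally, a flat face) on which $W^*$ is affine and whose extreme points lie on $\partial\Omega$, where $W^*=e^{\beta g}$. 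Any supporting hyperplane at $z_0$ must contain this face and must support the boundary datum at each of its extreme contact points.

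The main obstacle, and the heart of the argument, is this uniqueness claim. The strategy is by contradiction: two distinct supporting hyperplanes at an interior $z_0$ would both have to support $e^{\beta g}$ at the boundary contact points of the face through $z_0$, producing two distinct one-sided tangent directions there; tracing these back along the contact segment yields precisely a wedge emanating from $\partial\Omega$ and propagating into $\Omega$, which is exactly the configuration ruled out by the $NV$-condition. Making this rigorous requires a careful analysis of the contact set and of how the affine faces of the envelope terminate on $\partial\Omega$, and this is where the geometric content of $NV$ is used in full. Once uniqueness of the supporting hyperplane is established at every interior point, $W^*$ is differentiable throughout $\Omega$, hence $C^1(\Omega)$, and consequently $u^*_\alpha=\frac1\beta\log W^*\in C^1(\Omega)$, completing the proof.
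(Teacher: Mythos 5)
Your opening reduction is genuinely different from the paper and, once repaired, correct: with $\beta=K_\alpha=\tfrac{1-\alpha}{\alpha}$ the explicit formula of Lemma~\ref{lemma-L-1d} shows that $L_\alpha v=0$ exactly when $e^{\beta v}$ is affine, so $u$ is $\alpha$-convex if and only if $e^{\beta u}$ is convex, and $e^{\beta u^*_\alpha}$ is the classical convex envelope of $e^{\beta g}$ (which stays above the positive constant $\min_{\partial\Omega}e^{\beta g}$, so taking $\tfrac1\beta\log$ back is harmless). Note, however, that Proposition~\ref{prop.compos} does not give you this: its conclusion is that $f\circ u$ is again \emph{$\alpha$-convex}, not convex; you need the direct ODE computation just described. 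The transfer of the NV-property under $s\mapsto e^{\beta s}$ and the standard fact that a convex function differentiable at every point of an open set is $C^1$ there are both fine. This collapses the problem to the case $\alpha=1$, whereas the paper works with $\alpha$-hyperplanes for general $\alpha$; your route would buy a cleaner presentation if the $\alpha=1$ case were actually established.

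The genuine gap is in the uniqueness of the supporting hyperplane, which you correctly identify as the heart of the matter but do not prove. Your sketch assumes that two distinct supporting hyperplanes $P_1,P_2$ at $z_0$ must both touch the boundary datum at a common boundary point lying on (or arbitrarily near) their coincidence hyperplane $L=\{P_1=P_2\}$, so that a V-shaped touching in the sense of Definition~\ref{eq:touch-v} appears. This need not happen: each $P_i$ separately must contact the datum somewhere on $\partial\Omega$ (else one could lift it), but those contact points have no reason to coincide or to lie on $L$, and at the points of $L\cap\partial\Omega$ one may well have $g$ strictly above $P_1=P_2$. This is precisely the dichotomy the paper confronts in Lemma~\ref{lema-supporting-unique}: if $g$ equals the two hyperplanes at some point of $L\cap\partial\Omega$ one gets the NV contradiction, and otherwise one must \emph{construct} a third supporting function $\pi$ with $\pi=P_1=P_2$ on $L$ and $\pi<\max\{P_1,P_2\}$ off $L$, lift it by a small $\eps$ using compactness of $\partial\Omega$, and exhibit $\max\{\min g,\,\eps+\pi,\,P_1,\,P_2\}$ as an admissible competitor strictly exceeding the envelope at $z_0$. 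Without this second branch (or an equivalent argument), the proof is incomplete; the phrase ``making this rigorous requires a careful analysis of the contact set'' is exactly the part that cannot be deferred, because the configuration you describe as ``exactly the configuration ruled out by NV'' is not forced by the hypotheses.
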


Concerning $C^1$ regularity for the usual convex envelope, condition NV is more
general than the smoothness assumptions in  \cite{OS} by Oberman and Silvestre but in that reference the authors prove 
$C^{1,\alpha}$-regularity.

\section{$\alpha$-convex functions} \label{sect-L}

In this section we begin by studying the $1-$dimensional problem associated with 
the operator $L_\alpha (v) (t) = \alpha v'' (t) + (1-\alpha) |v' (t)|^2$ for $\alpha\in (0,1)$.
Note that for $\alpha=0$ and for $\alpha=1$ we already know the explicit solution.

\subsection{1-dimensional equation}

We begin by computing the explicit solution of the 1-dimensional equation.
Below we assume $b>a$, since $a>b$ is analogous and $a=b$ gives a constant solution.

\begin{lemma} \label{lemma-L-1d}
Let $\alpha\in (0,1)$ and $b>a$. The solution to 
\begin{equation} \label{Mikko}
\left\{
\begin{array}{l}
\displaystyle L_\alpha (v) (t) = \alpha v'' (t) + (1-\alpha) |v' (t)|^2 =0, \qquad t \in (0,1), \\
\displaystyle v(0) = a, \\ 
v(1) = b,
\end{array}
\right.
\end{equation}
is given by
\begin{equation}
\label{eq:v}
	v(t)=\frac{1}{K_\alpha}\ln(1+(e^{(b-a)K_\alpha}-1)t)+a,
\end{equation}
	with $K_\alpha = \frac{(1-\alpha)}{\alpha}$.
	\end{lemma}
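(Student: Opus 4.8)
The plan is to linearize the equation by an exponential change of the unknown, after which it reduces to the trivial equation $h''\equiv 0$. First I would note that since $v$ is real valued one has $|v'(t)|^2=(v'(t))^2$, so that dividing \eqref{Mikko} by $\alpha>0$ turns the problem into
$$v''(t)+K_\alpha\,(v'(t))^2=0,\qquad t\in(0,1),$$
with $K_\alpha=(1-\alpha)/\alpha$ and boundary values $v(0)=a$, $v(1)=b$.

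The key step is to introduce $h(t)=e^{K_\alpha v(t)}$. A direct differentiation gives $h''=K_\alpha e^{K_\alpha v}\big(v''+K_\alpha (v')^2\big)$, so the equation is equivalent to $h''\equiv 0$ on $(0,1)$; that is, $h$ must be affine, say $h(t)=At+B$. Inverting, $v(t)=\tfrac{1}{K_\alpha}\ln(At+B)$, and the two constants are uniquely pinned down by the endpoint conditions: evaluating $h$ at $t=0$ and $t=1$ gives $B=e^{K_\alpha a}$ and $A+B=e^{K_\alpha b}$, hence $A=e^{K_\alpha a}\big(e^{(b-a)K_\alpha}-1\big)$. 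Factoring $e^{K_\alpha a}$ out of the logarithm then yields exactly the claimed formula \eqref{eq:v}.

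To close the argument I would verify admissibility and uniqueness. Since $b>a$ and $K_\alpha>0$ we have $e^{(b-a)K_\alpha}-1>0$, so the argument $1+(e^{(b-a)K_\alpha}-1)t$ of the logarithm is at least $1$ for every $t\in[0,1]$; consequently $v$ is smooth on $[0,1]$, is increasing, and a final differentiation confirms $L_\alpha(v)=0$ together with the boundary values. Uniqueness is automatic, as $h$ is forced to be affine and an affine function is determined by its two endpoint values.

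I do not expect a genuine obstacle here: the whole computation is routine once the linearizing substitution $h=e^{K_\alpha v}$ is spotted, and the only point requiring a little care is keeping track of the constants $A,B$ when inverting the exponential and checking the positivity of the logarithm's argument so that the solution is well defined on all of $[0,1]$.
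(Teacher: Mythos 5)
Your proof is correct, but it takes a genuinely different route from the paper's. The paper reduces the order: it sets $\gamma=v'$, observes that $\gamma'+K_\alpha|\gamma|^2=0$, integrates this first-order equation to get $\gamma(t)=1/(C_1+K_\alpha t)$, integrates once more to obtain $v(t)=\frac{1}{K_\alpha}\ln(C_1+K_\alpha t)+C_2$, and then fits the constants to the boundary data. You instead use the exponential (Cole--Hopf type) substitution $h=e^{K_\alpha v}$, which turns the equation exactly into $h''\equiv 0$, so $h$ is affine and everything follows by inverting. Your computation checks out: $h''=K_\alpha e^{K_\alpha v}\bigl(v''+K_\alpha (v')^2\bigr)$, the endpoint values give $B=e^{K_\alpha a}$ and $A=e^{K_\alpha a}\bigl(e^{(b-a)K_\alpha}-1\bigr)$, and factoring out $e^{K_\alpha a}$ recovers \eqref{eq:v}. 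Two small advantages of your route: the substitution is a bijection onto positive functions and never requires dividing by $v'$ (the paper's integration of the Riccati-type equation for $\gamma$ tacitly assumes $\gamma\neq 0$, which is harmless here since $b>a$ rules out the constant solution, but is left implicit), and uniqueness of the classical solution comes for free since an affine $h$ is determined by its two endpoint values, whereas the paper only addresses uniqueness later via the initial value problem. The paper's approach, on the other hand, generalizes more readily to the perturbed equation $L_\alpha w=-\eta^2$ treated in its Lemma 2.6, where no linearizing substitution is available.
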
 
\begin{proof}
	Call $\gamma (t) = v'(t)$, then $\gamma$ is a solution to 
	$$
	\gamma' (t) + \frac{(1-\alpha)}{\alpha} |\gamma (t)|^2 =0, \qquad t \in (0,1).
	$$
	A simple integration of this equation gives that
	$$
	\gamma (t) = \frac{1}{ C_1 + K_\alpha t} 
	$$
	with $K_\alpha = \frac{(1-\alpha)}{\alpha}$ and $C_1$ a constant. 
	
	Then, the solutions to $\alpha v'' (t) + (1-\alpha) |v' (t)|^2 =0$ are given by
	$$
	v(t) = \frac{1}{K_\alpha } \ln ( C_1 + K_\alpha t ) + C_2.
	$$
	Now, we only have to choose $C_1$ and $C_2$ in order to fulfil $v(0) = a$ and $v(1) = b$. We obtain, after some computations,
$$
	C_1=\frac{K_\alpha}{e^{(b-a)K_\alpha}-1} \quad\text{and}\quad C_2=a-\frac{\ln(C_1)}{K_\alpha}.
	$$
Finally, we can use these constants to rearrange the expression for $v$ to get \eqref{eq:v}.
\end{proof}

Now, we state some consequences that follow from the previous explicit formula.

\begin{remark}
\label{rem:scaling}
	The problem behaves well under scaling, that is,
	$w(t) = v(Rt)$ is the solution to 
	$$
	\left\{
	\begin{array}{l}
\displaystyle L_\alpha (w) (t) = \alpha w'' (t) + (1-\alpha) |w' (t)|^2 =0, \qquad t \in (0,1/R), \\
\displaystyle w(0) = a, \\ 
w(1/R) = b.
\end{array}
\right.
	$$
	Therefore, given $x,y\in\Omega$ and $t_0<t_1$ we have  
		\begin{equation}
	u\left(\frac{t-t_0}{t_1-t_0}x+\frac{t_1-t}{t_1-t_0}y\right) \leq v(t), \qquad \text{ for all } t \in (t_0,t_1),
	\end{equation}
	where $v$ is the solution to 
	$$
	\left\{
	\begin{array}{l}
\displaystyle L v (t) =0, \qquad t \in (t_0,t_1), \\
\displaystyle v(t_0) = u(y), \\ 
v(t_1) = u(x).
\end{array}
\right.
	$$
\end{remark}

Concerning the comparison principle for $L_{\alpha}$ we have the following lemma.

\begin{lemma}
\label{lem:1dcomp}
For $\alpha\neq 0$, the operator $L_{\alpha}$ has a comparison principle: if the boundary values  are ordered, so are the solutions.	Even more, $L_\alpha$ has a strong comparison principle, if $v_1$ and $v_2$
	are two solutions to \eqref{Mikko} such that $v_1\geq v_2$ in $(0,1)$ and 
	they touch at one point, $v_1 (t_0) = v_2(t_0)$ then $v_1$ and $v_2$ coincide
	in the whole $(0,1)$, 
	$$
	v_1(t) = v_2(t), \qquad t \in (0,1).
	$$
	\end{lemma}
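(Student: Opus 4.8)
The plan is to linearize the equation through an exponential change of variables, reducing everything to the trivial comparison theory of affine functions. For $\alpha\in(0,1)$ I would set $K_\alpha=(1-\alpha)/\alpha>0$ and, given a solution $v$ of \eqref{Mikko}, introduce $w=e^{K_\alpha v}$. A direct computation gives $w''=K_\alpha e^{K_\alpha v}\big(v''+K_\alpha (v')^2\big)=\frac{K_\alpha}{\alpha}e^{K_\alpha v}\,L_\alpha(v)$, so that $L_\alpha(v)=0$ holds exactly when $w''=0$, that is, when $w$ is affine in $t$. This is consistent with the explicit formula \eqref{eq:v}, for which $e^{K_\alpha v}$ is precisely the affine function $e^{K_\alpha a}\big(1+(e^{(b-a)K_\alpha}-1)t\big)$. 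Since $s\mapsto e^{K_\alpha s}$ is strictly increasing, the ordering of two solutions $v_1,v_2$ is equivalent to the ordering of the corresponding $w_1,w_2$.

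For the weak comparison principle I would take two solutions with $v_1\geq v_2$ at $t=0,1$ and pass to $w_i=e^{K_\alpha v_i}$, which are affine with $w_1\geq w_2$ at both endpoints. An affine function that is nonnegative at the two endpoints of $[0,1]$ is nonnegative throughout, so $w_1-w_2\geq0$ on $[0,1]$, and applying the increasing inverse $s\mapsto\frac{1}{K_\alpha}\ln s$ returns $v_1\geq v_2$ on $(0,1)$.

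For the strong comparison principle I would additionally assume $v_1\geq v_2$ on $(0,1)$ with $v_1(t_0)=v_2(t_0)$ at an interior point $t_0$. Then $w_1-w_2$ is affine, nonnegative on $[0,1]$, and vanishes at the interior point $t_0$; such a function must vanish identically, since otherwise it would change sign across $t_0$. Hence $w_1\equiv w_2$, and therefore $v_1\equiv v_2$ on $(0,1)$.

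Finally I would dispose of the remaining case $\alpha=1$, where \eqref{Mikko} reduces to $v''=0$ so that $v$ is itself affine and the two arguments above apply verbatim with $v_i$ in place of $w_i$. I do not expect a genuinely hard step here; the only points requiring care are to record that $K_\alpha>0$ when $\alpha\in(0,1)$, which is exactly what makes the substitution a strictly monotone and invertible, hence order-preserving, change of variables, and to treat $\alpha=1$ separately since there $K_\alpha=0$ makes the transformation degenerate.
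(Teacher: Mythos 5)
Your proof is correct. The weak comparison principle is obtained in essentially the same spirit as in the paper (both arguments ultimately rest on the explicit structure of solutions from Lemma \ref{lemma-L-1d}; your substitution $w=e^{K_\alpha v}$ is exactly the inverse of the formula \eqref{eq:v}, which writes $v$ as $\frac{1}{K_\alpha}\ln$ of an affine function). Where you genuinely diverge is in the strong comparison principle: the paper observes that solutions are smooth, so at an interior touching point $t_0$ of $v_1\ge v_2$ one has $v_1'(t_0)=v_2'(t_0)$, and then invokes uniqueness for the second-order initial value problem (citing Coddington). You instead push the whole statement through the Cole--Hopf-type linearization and reduce it to the elementary fact that an affine function which is nonnegative on $[0,1]$ and vanishes at an interior point is identically zero. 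Your route is more self-contained (no appeal to an external ODE uniqueness theorem) and makes the mechanism transparent, at the price of being tied to the specific form of $L_\alpha$; the paper's derivative-matching argument is the one that would survive for a general one-dimensional operator with smooth solutions and IVP uniqueness. One small point worth recording explicitly: the touching point $t_0$ must be interior to $(0,1)$ for the strong statement (both your argument and the paper's need this), and your treatment of $\alpha=1$ separately is exactly right since $K_1=0$ degenerates the substitution.
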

	\begin{proof}
	The comparison principle follows from the explicit formula for the solution to the equation. We have to prove that we have a strong comparison principle.
	Observe that solutions 
	are smooth, therefore, if we have two solutions
	$v_1$ and $v_2$
	to \eqref{Mikko} such that $v_1\geq v_2$ in $(0,1)$ and 
	they touch at one point, $v_1 (t_0) = v_2(t_0)$ 
	then we also have  $v_1'(t_0) = v_2'(t_0)$. Then, we conclude that
	$$
	v_1(t) = v_2(t), \qquad t \in (0,1),
	$$
	from the uniqueness of solutions to the second order ODE
	$$
	\alpha v'' (t) + (1-\alpha) |v' (t)|^2 =0,
	$$
	with initial conditions at $t_0$, $v(t_0)=c_1$, $v'(t_0)=c_2$, 
	(see Theorem 7 in Section 8 of \cite{Codin}). 
	\end{proof}
	
	\begin{remark} \label{rem:continuity1-d} 
	From the explicit expression we obtain that $v_\alpha (t)$ is continuous with respect to 
	the boundary values $a$, $b$ and also with respect to $\alpha \in (0,1]$. Therefore, if we have sequences 
	$a_n\to a$, $b_n \to b$ and $\alpha_n \to \alpha \neq 0$ the corresponding solutions
	verify
	$$
	v_n (t) \to v(t)
	$$
	uniformly for $t \in [0,1]$.
	For $\alpha =0$ we have the pointwise convergence but it is not necessarily uniform. 
	 \end{remark}

	\begin{remark}
	\label{rem:delta}
	Also notice that since the explicit expression involves a 
	logarithm,  
	$$
	v(x)=\frac{1}{K_\alpha}\ln(1+(e^{(b-a)K_\alpha}-1)t)+a,
	$$
	we can extend $v$ to a maximal interval of the form $(-\delta , +\infty) $ when $b>a$, or of the form $(-\infty, \delta)$ when $a>b$ and when $a=b$ the solution is constant and the maximal interval where it is defined is $(-\infty,+\infty)$.
	Here
	$\delta$ depends on $|b-a|$ and $\alpha$, and it is given by 
	$$
	\delta (|b-a|, \alpha) = \frac{1}{e^{|b-a|K_\alpha}-1}.
	$$
	Notice that $\delta \to 0$ when $\alpha \to 0$ and $\delta \to \infty$ when $\alpha \to 1$ or when
	$|b-a|\to 0$.  
	\end{remark}

\begin{remark}
\label{rem:outside}
	We have stated as part of our definition of being $\alpha$-convex that if $u$ is $\alpha$-convex, then 
		\begin{equation}
	u\left(\frac{t-t_0}{t_1-t_0}x+\frac{t_1-t}{t_1-t_0}y\right) \leq v(t), \qquad \text{ for all } t \in (t_0,t_1),
	\end{equation}
	where $v$ is the solution to 
	$$
	\left\{
	\begin{array}{l}
\displaystyle L v (t) =0, \qquad t \in (t_0,t_1), \\
\displaystyle v(t_0) = u(y), \\ 
v(t_1) = u(x).
\end{array}
\right.
	$$
	
We also have that $$u\left(\frac{t-t_0}{t_1-t_0}x+\frac{t_1-t}{t_1-t_0}y\right)  \geq v(t)$$ for every  $t \in I\setminus(t_0,t_1)$ where $I$ is the maximal interval where both functions are defined. 
In fact,
suppose, for the sake of contradiction, that it does not hold.
Then, there exist $t_2$ such that $$u\left(\frac{t_2-t_0}{t_1-t_0}x+\frac{t_1-t_2}{t_1-t_0}y\right)  < v(t_2).$$
Assume without loss of generality that $t_1<t_2$ and consider $w$ such that
	$$
	\left\{
	\begin{array}{l}
\displaystyle L w (t) =0, \qquad t \in (t_0,t_2), \\
\displaystyle w(t_0) = u(y), \\ 
w(t_2) = u\left(\frac{t_2-t_0}{t_1-t_0}x+\frac{t_1-t_2}{t_1-t_0}y\right).
\end{array}
\right.
	$$
	Then, by the comparison principle, Lemma~\ref{lem:1dcomp}, we have $v>w$ in $(t_0,t_2)$.
	And we get $u(x)=v(t_1)>w(t_1)$ which is a contradiction since $u$ is $\alpha$-convex.
\end{remark}

Finally we need to consider an approximate solution to the problem.
We compute the explicit solution to the equation $\alpha w'' (t) + (1-\alpha) |w' (t)|^2 =-\eta^2$
with $\eta$ small.
We include the details in the case $\alpha \in (0,1)$ and we leave the cases $\alpha=0$ 
and $\alpha =1$ to the reader.

\begin{lemma} \label{lem:1d-eta}
Let $\alpha\in (0,1)$, $b>a$ and $\eta>0$ small enough.
The solution to 
\begin{equation} 
\label{eqwitheta}
\left\{
\begin{array}{l}
\displaystyle L_\alpha w (t) = \alpha w'' (t) + (1-\alpha) |w' (t)|^2 =-\eta^2, \qquad t \in (0,1), \\
\displaystyle w(0) = a, \\ 
w(1) = b,
\end{array}
\right.
\end{equation}
is given by
	$$
	w(t) = \frac{1}{K_\alpha } \ln \left( \cos\left(C_1 + \frac{\eta\sqrt{(1-\alpha)}}{\alpha} t\right) \right) + C_2.
	$$
	with $K_\alpha = \frac{(1-\alpha)}{\alpha}$ for an appropriate $C_1=C_1(\eta,a,b)$ and $C_2=C_2(\eta,a,b)$.
	\end{lemma}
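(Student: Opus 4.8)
The plan is to follow the same reduction as in Lemma~\ref{lemma-L-1d}, turning the second order problem \eqref{eqwitheta} into a first order one. First I would set $\gamma(t)=w'(t)$, so that the equation becomes
\begin{equation*}
\gamma'(t) = -K_\alpha\,\gamma(t)^2 - \frac{\eta^2}{\alpha}, \qquad K_\alpha=\frac{1-\alpha}{\alpha}.
\end{equation*}
Since $\alpha K_\alpha = 1-\alpha$, I can rewrite this as $\gamma'=-K_\alpha(\gamma^2+\beta^2)$ with $\beta^2=\eta^2/(1-\alpha)$. This is the crucial structural difference with Lemma~\ref{lemma-L-1d}: there the right-hand side was $-K_\alpha\gamma^2$, whose primitive is rational, whereas here the strictly positive constant $\beta^2$ is exactly what produces a trigonometric primitive and hence the cosine in the statement.

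Next I would separate variables and integrate twice. From $\int \frac{d\gamma}{\gamma^2+\beta^2}=\frac1\beta\arctan(\gamma/\beta)$ I get $\arctan(\gamma/\beta)=-\beta K_\alpha t + C$, so that
\begin{equation*}
\gamma(t)=w'(t)=\beta\tan\!\left(C-\beta K_\alpha t\right).
\end{equation*}
Integrating once more with $\int\tan=-\ln|\cos|$, and using $\beta K_\alpha=\eta\sqrt{1-\alpha}/\alpha$, yields
\begin{equation*}
w(t)=\frac{1}{K_\alpha}\ln\left|\cos\left(C-\frac{\eta\sqrt{1-\alpha}}{\alpha}\,t\right)\right|+C_2,
\end{equation*}
which, because the cosine is even, is precisely the claimed expression after setting $C_1=-C$.

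It then remains to pin down $C_1$ and $C_2$ from $w(0)=a$, $w(1)=b$. Writing $\omega=\eta\sqrt{1-\alpha}/\alpha$ and subtracting the two boundary relations kills $C_2$ and leaves $\cos(C_1+\omega)/\cos(C_1)=e^{(b-a)K_\alpha}$; expanding the cosine turns this into $\cos\omega-\tan(C_1)\sin\omega=e^{(b-a)K_\alpha}$, which determines $C_1$ through its tangent, and then $C_2=a-K_\alpha^{-1}\ln|\cos C_1|$. As a sanity check I would note that letting $\eta\to0$ forces $\omega\to0$ and $\tan(C_1)\to-\infty$, i.e.\ $C_1\to-\pi/2$, which recovers the unperturbed solution \eqref{eq:v} in the limit.

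The only real obstacle, and the reason for the hypothesis that $\eta$ be small, is to ensure that the argument $C_1+\omega t$ never leaves the interval $(-\pi/2,\pi/2)$—the branch on which $\cos C_1>0$—as $t$ ranges over $[0,1]$, so that the cosine stays positive, the absolute values are superfluous, and $w$ is a genuine $C^\infty$ solution. Since $b-a$ is fixed and $\omega=O(\eta)$, the computation above shows $C_1$ stays close to $-\pi/2$ while the total increment $\omega$ of the argument over $[0,1]$ is $O(\eta)$; hence both $C_1>-\pi/2$ and $C_1+\omega<\pi/2$ hold once $\eta$ is below a threshold depending only on $\alpha$ and $b-a$. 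Everything else is the same elementary integration as in Lemma~\ref{lemma-L-1d}, so I expect no further difficulty.
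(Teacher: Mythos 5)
Your proposal is correct and follows essentially the same route as the paper: both reduce to the quotient condition $\cos(C_1+\omega)/\cos(C_1)=e^{K_\alpha(b-a)}$ with $\omega=\eta\sqrt{1-\alpha}/\alpha$, and both use the smallness of $\eta$ to keep the argument of the cosine inside $(-\pi/2,\pi/2)$. The only cosmetic differences are that you derive the ansatz by separating variables for $\gamma=w'$ (the paper just verifies it by direct computation) and you solve for $C_1$ explicitly via $\tan C_1$ where the paper uses a monotonicity and intermediate-value argument on $C_1\mapsto\cos(C_1+\omega)/\cos(C_1)$ over $(-\pi/2,0]$.
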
 
\begin{proof}
That the function is a solution follows from direct computation.
We have to show that we can choose $C_1$ and $C_2$ in order to fulfil $w(0) = a$, $w(1) = b$ and for the function to be defined for $t\in(0,1)$.
We have
\[
\frac{1}{K_\alpha}\ln(\cos(C_1))+C_2=a
\]
and
\[
\frac{1}{K_\alpha}\ln\left(\cos\left(C_1+\frac{\eta\sqrt{1-\alpha}}{\alpha}\right)\right)+C_2=b.
\]
By subtracting the equations, after some computations, we get that
\begin{equation}
\label{eqC}
\frac{\cos\left(C_1+\frac{\eta\sqrt{1-\alpha}}{\alpha}\right)}{\cos(C_1)}
=e^{K_\alpha(b-a)}.
\end{equation}
Assume that $\eta$ is small enough such that $\frac{\eta\sqrt{1-\alpha}}{\alpha}<\pi/2$, then the LHS, that we call $F(C_1)$, is defined for $C_1\in(-\pi/2,0]$.
Observe that it is decreasing in $C_1$, $F(0)\leq 1$ and $\lim_{C_1\to -\pi/2^+}F(C_1)=+\infty$.
Then, since the RHS is greater than 1, we get that there exists a unique $C_1$ such that the equality holds.
Moreover, we also have $-\pi/2<C_1<C_1+\frac{\eta\sqrt{1-\alpha}}{\alpha}<\pi/2$ so the function is defined for every $t\in(0,1)$.
Finally, we conclude by taking
$$
C_2=a-\frac{\ln(\cos(C_1))}{K_\alpha}.
$$
\end{proof}

\begin{lemma}
\label{lem:convw}
Let $\alpha\in (0,1)$ and $b>a$.
The solution $w$ to \eqref{eqwitheta} converges uniformly to $v$ the solution to \eqref{Mikko} as $\eta\to 0$.
\end{lemma}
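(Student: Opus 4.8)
The plan is to work directly from the explicit formulas. By Lemma~\ref{lem:1d-eta} the solution with right-hand side $-\eta^2$ is
$$w(t) = \frac{1}{K_\alpha}\ln\left(\cos\left(C_1 + \beta t\right)\right) + C_2, \qquad \beta := \frac{\eta\sqrt{1-\alpha}}{\alpha},$$
with $C_2 = a - \tfrac{1}{K_\alpha}\ln(\cos C_1)$, so after substituting $C_2$ the solution takes the self-normalizing form
$$w(t) = \frac{1}{K_\alpha}\ln\left(\frac{\cos(C_1 + \beta t)}{\cos C_1}\right) + a.$$
The only remaining dependence on $\eta$ sits in $C_1 = C_1(\eta)$ through the constraint \eqref{eqC}, namely $\cos(C_1+\beta) = R\cos C_1$ with $R := e^{K_\alpha(b-a)} > 1$. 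The difficulty is that as $\eta \to 0$ (so $\beta \to 0$) the constant $C_1$ does \emph{not} stay bounded away from $-\pi/2$; in fact $\cos C_1 \to 0$ and each individual term in the formula for $w$ blows up, so one cannot pass to the limit factor by factor. The remedy is to eliminate $C_1$ altogether.

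To do so I would write $C_1 = -\pi/2 + s$ with $s = s(\eta) \in (0,\pi/2)$, so that $\cos C_1 = \sin s$ and the constraint becomes $\sin(s+\beta) = R\sin s$, equivalently $\tan s = \frac{\sin\beta}{R - \cos\beta}$, hence $\cot s = \frac{R - \cos\beta}{\sin\beta}$. Expanding $\cos(C_1 + \beta t) = \sin(s + \beta t) = \sin s\cos(\beta t) + \cos s\sin(\beta t)$ and dividing by $\sin s$ then yields the fully explicit expression
$$w(t) = \frac{1}{K_\alpha}\ln\left(\cos(\beta t) + \frac{(R - \cos\beta)\,\sin(\beta t)}{\sin\beta}\right) + a,$$
in which $s$ (and therefore $C_1$) has disappeared: the argument of the logarithm is now a single explicit function of $\beta$ and $t$.

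Finally I would let $\beta \to 0$. Using $\cos(\beta t) \to 1$, $R - \cos\beta \to R - 1$, and $\frac{\sin(\beta t)}{\sin\beta} \to t$ uniformly for $t \in [0,1]$ (write $\frac{\sin(\beta t)}{\sin\beta} = t\,\frac{\sin(\beta t)/(\beta t)}{\sin\beta/\beta}$ and recall $\frac{\sin x}{x}\to 1$ uniformly on bounded sets), the argument of the logarithm converges uniformly on $[0,1]$ to $1 + (R-1)t = 1 + (e^{K_\alpha(b-a)} - 1)t$. This quantity is bounded below by $1$ on $[0,1]$, so it stays in a compact subinterval of $(0,\infty)$ on which $\ln$ is uniformly continuous; composing, $w(t)$ converges uniformly on $[0,1]$ to $\frac{1}{K_\alpha}\ln(1 + (e^{K_\alpha(b-a)} - 1)t) + a$, which is exactly $v(t)$ by \eqref{eq:v}. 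The main obstacle is the indeterminacy noted above ($C_1 \to -\pi/2$, $\cos C_1 \to 0$); once $C_1$ is eliminated via the constraint, the convergence is routine.
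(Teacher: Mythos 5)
Your proof is correct, and it takes a genuinely different and in fact cleaner route than the paper's. The paper keeps the implicit constant $C_\eta$ in play throughout: it first shows $C_\eta\to-\pi/2$ via concavity of the cosine, then invokes the implicit function theorem to compute $\partial C_\eta/\partial\eta$ and its limit, applies L'H\^{o}pital's rule to get pointwise convergence of the ratio $F_\eta(t)=\cos(C_\eta+\beta t)/\cos(C_\eta)$, and finally upgrades to uniform convergence by showing the $t$-derivatives $\partial F_\eta/\partial t$ converge uniformly. You instead eliminate $C_\eta$ algebraically: writing $C_1=-\pi/2+s$, the constraint \eqref{eqC} gives $\cot s=(R-\cos\beta)/\sin\beta$ with $R=e^{K_\alpha(b-a)}$, and the angle-addition formula turns the argument of the logarithm into the closed-form expression $\cos(\beta t)+(R-\cos\beta)\sin(\beta t)/\sin\beta$, whose uniform limit $1+(R-1)t$ on $[0,1]$ follows from elementary estimates (and agrees with $w(0)=a$, $w(1)=b$ as a sanity check). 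This buys you a shorter argument that avoids the implicit function theorem and L'H\^{o}pital entirely, handles the indeterminacy $\cos C_\eta\to 0$ at the source, and yields uniform convergence directly rather than through convergence of derivatives; the paper's route, while more roundabout, produces as a by-product the asymptotics of $C_\eta$ itself, which your computation bypasses. Both are valid; yours is the more economical.
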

\begin{proof}
We have
\[
w(t) = \frac{1}{K_\alpha } \ln \left(
\frac{ \cos\left(C_\eta + \frac{\eta\sqrt{1-\alpha}}{\alpha} t\right)}
{\cos(C_\eta)}
 \right) + a.
\]
where $C_\eta:=C_1$ is given by \eqref{eqC}, that is
\begin{equation}
\label{eqCeta}
\frac{\cos\left(C_\eta+\frac{\eta\sqrt{1-\alpha}}{\alpha}\right)}{\cos(C_\eta)}
=e^{K_\alpha(b-a)}.
\end{equation}
We consider
\[
F_\eta(t)=\frac{ \cos\left(C_\eta + \frac{\eta\sqrt{1-\alpha}}{\alpha} t\right)}
{\cos(C_\eta)}
\]
and we want to show that it converges uniformly to 
\[
F(t)=1+(e^{(b-a)K_\alpha}-1)t
\]
as $\eta\to 0$.

Observe that the cosine function, $\cos (\cdot)$, is concave in $(-\pi/2,0)$. Hence, we have 
\[
\cos\left(C_\eta+\frac{\eta\sqrt{1-\alpha}}{\alpha}\right)
\leq
\cos\left(C_\eta\right)-\sin(C_\eta)\frac{\eta\sqrt{1-\alpha}}{\alpha}.
\]
Therefore, bounding $\sin (\cdot)$ by 1 we get
\[
e^{K_\alpha(b-a)}=\frac{\cos\left(C_\eta+\frac{\eta\sqrt{1-\alpha}}{\alpha}\right)}{\cos(C_\eta)}
\leq 
1+\frac{\eta\sqrt{1-\alpha}}{\alpha \cos\left(C_\eta\right)},
\]
and then we obtain
\[
 \cos\left(C_\eta\right)
\leq 
\frac{\eta\sqrt{1-\alpha}}{\alpha (e^{K_\alpha(b-a)}-1)}.
\]
From this, we conclude that $$\lim_{\eta\to 0}C_\eta=- \frac{\pi}{2}.$$

By the implicit function theorem we get
\[
\frac{\partial C_\eta}{\partial \eta}
=\frac{\sin(C_\eta +\eta \frac{\sqrt{1-\alpha}}{\alpha})\frac{\sqrt{1-\alpha}}{\alpha}}{-\sin(C_\eta +\eta \frac{\sqrt{1-\alpha}}{\alpha})+\sin(C_\eta) e^{K_\alpha(b-a)}}.
\]
Then
\[
\lim_{\eta\to 0}\frac{\partial C_\eta}{\partial \eta}
=\frac{\frac{\sqrt{1-\alpha}}{\alpha}}{e^{K_\alpha(b-a)}-1}.
\]
With this result, applying L'H\^{o}pital's rule to the quotient in the definition of $F_\eta$ we get $$\lim_{\eta\to 0} F_\eta(t)=F(t).$$ 
Finally, to conclude that the convergence is uniform we consider
\[
\frac{\partial F_\eta(t)}{\partial t}=
\frac{-\sin(C_\eta+\eta \frac{\sqrt{1-\alpha}}{\alpha}t) \eta \frac{\sqrt{1-\alpha}}{\alpha}}{\cos(C_\eta)}
\]
Since 
\[
\sin\left(C_\eta+\eta \frac{\sqrt{1-\alpha}}{\alpha}t\right)\to -1
\]
uniformly and by using again L'H\^{o}pital's rule we obtain that
\[
\lim_{\eta\to 0}\frac{\eta\frac{\sqrt{1-\alpha}}{\alpha}}{\cos(C_\eta)}=
e^{K_\alpha(b-a)}-1,
\]
so the derivatives converge uniformly to $e^{(b-a)K_\alpha}-1$ which is the derivative of $F$ and then we conclude the same convergence for the functions, $F_\eta \to F$
uniformly as $\eta \to 0$.
\end{proof}

	\subsection{Basic properties}
	
\label{sec:basic}
	
We begin observing that the notions of convexity associated with 
	$L_\alpha$ are stronger as $\alpha$ increases. As we mentioned in the introduction, this is naturally consistent with the fact that convexity (the concept for $\alpha=1$) implies quasiconvexity
	($\alpha=0$).
		We also observe that the supremum of $\alpha$-convex functions is $\alpha$-convex.
		
	\begin{proposition}
	\label{prop:bigger-alpha-stronger}
	Let $\alpha > \hat{\alpha}$. If 
	$u$ is $\alpha$-convex, then $u$ is $\hat \alpha$-convex.
\end{proposition}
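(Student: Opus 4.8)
The plan is to reduce the statement to a one-dimensional comparison between the solutions associated with $L_\alpha$ and $L_{\hat\alpha}$ that share the same boundary data. Fix $x,y\in\overline{\Omega}$ and set $a=u(y)$, $b=u(x)$; we may assume $b>a$, since $a=b$ gives constant solutions and $a>b$ is symmetric. Let $v_\alpha$ and $v_{\hat\alpha}$ be the solutions on $(0,1)$ to $L_\alpha(v)=0$ and $L_{\hat\alpha}(v)=0$ with $v(0)=a$, $v(1)=b$. Because $u$ is $\alpha$-convex we already know $u(tx+(1-t)y)\le v_\alpha(t)$, so it is enough to prove the one-dimensional inequality $v_\alpha\le v_{\hat\alpha}$ on $(0,1)$: this immediately yields $u(tx+(1-t)y)\le v_{\hat\alpha}(t)$ for all $t$, which is precisely $\hat\alpha$-convexity.

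The crux is to evaluate the operator $L_{\hat\alpha}$ on the function $v_\alpha$. Using $L_\alpha(v_\alpha)=0$, i.e. $v_\alpha''=-\tfrac{1-\alpha}{\alpha}|v_\alpha'|^2$, and substituting into $L_{\hat\alpha}$ gives
$$
L_{\hat\alpha}(v_\alpha)=\hat\alpha v_\alpha''+(1-\hat\alpha)|v_\alpha'|^2=\frac{\alpha-\hat\alpha}{\alpha}\,|v_\alpha'|^2\ge 0,
$$
since $\alpha>\hat\alpha$. In addition, the explicit formula of Lemma~\ref{lemma-L-1d} shows that $v_\alpha$ is strictly increasing when $b>a$, so $v_\alpha'>0$ throughout $(0,1)$.

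For $\hat\alpha\neq 0$ I would then run a maximum principle argument on $w:=v_\alpha-v_{\hat\alpha}$, which vanishes at both endpoints. If $w$ had a positive interior maximum at some $t_0$, then $w'(t_0)=0$ and $w''(t_0)\le 0$, that is $v_\alpha'(t_0)=v_{\hat\alpha}'(t_0)$ and $v_\alpha''(t_0)\le v_{\hat\alpha}''(t_0)$. The first-order terms of $L_{\hat\alpha}(v_\alpha)$ and $L_{\hat\alpha}(v_{\hat\alpha})$ then coincide, so subtracting $L_{\hat\alpha}(v_{\hat\alpha})(t_0)=0$ leaves $L_{\hat\alpha}(v_\alpha)(t_0)=\hat\alpha\big(v_\alpha''(t_0)-v_{\hat\alpha}''(t_0)\big)\le 0$. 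Combined with the displayed identity this forces $v_\alpha'(t_0)=0$, contradicting $v_\alpha'>0$. Hence $w\le 0$ and $v_\alpha\le v_{\hat\alpha}$, as required.

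The case $\hat\alpha=0$ has to be handled apart, because there the one-dimensional solution is $v_0\equiv\max\{a,b\}=b$, which is not a classical solution, so the smooth argument above does not apply. Fortunately the conclusion is direct: since $v_\alpha$ increases from $a$ to $b$, we have $v_\alpha(t)\le b=\max\{u(x),u(y)\}$, whence $u(tx+(1-t)y)\le\max\{u(x),u(y)\}$, i.e. quasiconvexity. I expect the main obstacle to be precisely the comparison $v_\alpha\le v_{\hat\alpha}$: the sign computation $L_{\hat\alpha}(v_\alpha)\ge 0$ is the decisive step, and the argument closes only because of the strict monotonicity $v_\alpha'>0$, so the sign bookkeeping and the degenerate endpoint $\hat\alpha=0$ are the points requiring care.
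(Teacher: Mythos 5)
Your proof is correct, but it takes a different route from the paper's official argument. The paper proves the proposition in one line by observing from the explicit formula of Lemma~\ref{lemma-L-1d} that, for fixed boundary values $a,b$, the solution $v_\alpha$ is monotone decreasing in $\alpha$; the comparison $v_\alpha\le v_{\hat\alpha}$ is then read off directly. What you do instead is exactly the strategy the paper only sketches in the remark following its proof: you show $v_\alpha$ is a subsolution of $L_{\hat\alpha}$ via the clean identity $L_{\hat\alpha}(v_\alpha)=\tfrac{\alpha-\hat\alpha}{\alpha}\,|v_\alpha'|^2\ge 0$, and then close the comparison with a maximum-principle argument on $w=v_\alpha-v_{\hat\alpha}$. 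Your version is arguably more complete than the paper's sketch on two counts: Lemma~\ref{lem:1dcomp} as stated compares two \emph{solutions} with ordered boundary data, not a subsolution against a solution, so your explicit handling of the degenerate first-order term at a critical point of $w$ (where you need $v_\alpha'>0$ to derive the contradiction) genuinely fills in the step the remark glosses over; and you treat the endpoint case $\hat\alpha=0$ separately, where the one-dimensional solution is only a viscosity solution and the smooth comparison does not apply --- there your direct observation $v_\alpha\le\max\{a,b\}$ is the right fix. The trade-off is that the paper's formula-based proof is shorter and uniform in $\alpha$, while yours is structural and would survive for operators without an explicit solution formula.
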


	\begin{proof}
Using the explicit expression found in Lemma~\ref{lemma-L-1d}  
we obtain that 
	when we fix the boundary values $a$, $b$, then $v_\alpha (t)$, the solution to 
	$$
	\left\{
	\begin{array}{l}
\displaystyle L_\alpha (v) (t) = \alpha v'' (t) + (1-\alpha) |v' (t)|^2 =0, \qquad t \in (0,1), \\
\displaystyle v(0) = a, \\ 
v(1) = b,
\end{array}
\right.
	$$
	is decreasing with respect to $\alpha$. 
	This implies the claim.
\end{proof}

Alternatively the above proof could be obtained from the observation that $v_\alpha''\leq 0$ 
(this follows from the equation), and thus $v_\alpha$ is a subsolution to  $L_{\hat \alpha}v\ge 0$ for $\alpha > \hat{\alpha}$. Then, the result follows using the comparison principle from Lemma \ref{lem:1dcomp}.

	\begin{lemma} \label{lema.sup}
	Let $\{ u_n \}$ be a family of $\alpha$-convex functions. Then
	$$
	\overline{u} (z) = \sup \Big\{u_n(z) \Big\},
	$$
	is also $\alpha$-convex. 
\end{lemma}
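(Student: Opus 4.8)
The plan is to reduce everything to the one-dimensional comparison principle already established in Lemma~\ref{lem:1dcomp}. Fix two points $x,y\in\overline{\Omega}$ and a parameter $t\in(0,1)$, and let $\overline{v}$ denote the solution on $(0,1)$ to $L_\alpha(\overline{v})=0$ with boundary data $\overline{v}(0)=\overline{u}(y)$ and $\overline{v}(1)=\overline{u}(x)$; this solution exists and is given by the explicit formula of Lemma~\ref{lemma-L-1d} once we know that the supremum defining $\overline{u}$ is finite at $x$ and $y$, which I assume throughout. By Definition~\ref{L-convex-defi}, establishing the $\alpha$-convexity of $\overline{u}$ amounts to proving
$$\overline{u}(tx+(1-t)y)\le \overline{v}(t).$$

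First I would exploit the $\alpha$-convexity of each member of the family. For every $n$, Definition~\ref{L-convex-defi} gives
$$u_n(tx+(1-t)y)\le v_n(t),$$
where $v_n$ solves $L_\alpha(v_n)=0$ on $(0,1)$ with $v_n(0)=u_n(y)$ and $v_n(1)=u_n(x)$. The crucial observation is that the endpoint data of $v_n$ are dominated by those of $\overline{v}$, since by the definition of the supremum one has $u_n(y)\le\overline{u}(y)$ and $u_n(x)\le\overline{u}(x)$.

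Next I would invoke the comparison principle. Because the boundary values are ordered, Lemma~\ref{lem:1dcomp} yields $v_n(t)\le\overline{v}(t)$ for all $t\in(0,1)$ (for $\alpha=0$ the same monotonicity is immediate from the explicit formula $v(t)=\max\{u(x),u(y)\}$). Chaining the two inequalities gives $u_n(tx+(1-t)y)\le\overline{v}(t)$ for every $n$, and taking the supremum over $n$ on the left-hand side produces exactly $\overline{u}(tx+(1-t)y)\le\overline{v}(t)$. Since $x,y$ and $t$ were arbitrary, $\overline{u}$ is $\alpha$-convex.

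There is no serious obstacle here: the entire content of the lemma is carried by the monotone dependence of the one-dimensional solution on its boundary data. The only point requiring a word of care is the well-posedness of $\overline{v}$, that is, the finiteness of the supremum at $x$ and $y$ so that the comparison lemma applies to a genuine solution; this is implicit in the hypothesis that $\overline{u}$ is real-valued, and no uniformity in $n$ is needed, because the supremum over $n$ is taken only after the pointwise bound by the single function $\overline{v}(t)$ is already in place.
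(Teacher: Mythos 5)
Your proposal is correct and follows essentially the same route as the paper: apply the definition of $\alpha$-convexity to each $u_n$, use the one-dimensional comparison principle of Lemma~\ref{lem:1dcomp} to replace the boundary data $u_n(x), u_n(y)$ by the larger values $\overline{u}(x), \overline{u}(y)$, and then take the supremum over $n$ against the single comparison function $\overline{v}$. Your added remarks on the finiteness of the supremum and on the $\alpha=0$ case are reasonable precautions but do not change the argument.
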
	

\begin{proof}
	For each $u_n$ in the family, since $u_n$ is $\alpha$-convex, we have that for any pair of points
	$x$, $y$ in $\overline{\Omega}$ it holds that 
	$$
	u_n (tx+(1-t)y) \leq v(t) 
	$$
	with $v$ the solution to 
	$$
	\left\{
	\begin{array}{l}
\displaystyle L_\alpha v (t) =0, \qquad t \in (0,1), \\
\displaystyle v(0) = u_n(y), \\ v(1) = u_n(x).
\end{array}
\right.
	$$
	As $\overline{u} (x) \geq u_n(x)$ and $\overline{u} (y) \geq u_n(y)$ by the comparison 
	principle for $L_\alpha$ in Lemma~\ref{lem:1dcomp} in the interval $(0,1)$
	we have that 
	$$
	v(t) \leq \tilde{v} (t), \qquad t \in (0,1),
	$$
	with $ \tilde{v}$ the solution to
	$$
	\left\{
	\begin{array}{l}
\displaystyle L_\alpha \tilde{v} (t) =0, \qquad t \in (0,1), \\
\displaystyle \tilde{v}(0) = \overline{u} (y), \\ \tilde{v}(1) = \overline{u} (x).
\end{array}
\right.
	$$
	Therefore, we arrive to
	\begin{equation} \label{tttt}
	u_n(tx+(1-t)y) \leq \tilde{v}(t) .
	\end{equation}
	Taking supremum in the left hand side of \eqref{tttt} we obtain
	$$
	\overline{u} (tx+(1-t)y) \leq \tilde{v}(t) .
	$$
	This proves that $\overline{u}$ is $\alpha$-convex. \end{proof}
	
	\begin{remark}
	Also observe that the same proof shows that, for a general $1$-dimensional operator $L$,  the supremum of $L$-convex functions is $L$-convex
	if the operator $L$ has a comparison principle. 
	\end{remark}

	\subsection{First part of the proof of Theorem \ref{teo.1.intro}: Lipschitz continuity} 
	
	We show that for $\alpha \neq 0$, $\alpha$-convex functions are Lipschitz continuous inside the domain ${\Omega}$. This is the first part of Theorem \ref{teo.1.intro}, but as it is of independent interest,  we formulate the result separately.
	
	
	\begin{theorem} \label{teo.cont}
	Let $\alpha \neq 0$. A bounded $\alpha$-convex function $u$ is Lipschitz continuous in $\Omega$.
	\end{theorem}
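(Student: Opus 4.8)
The plan is to localize and reduce the whole estimate to the one-dimensional interpolants of Lemma~\ref{lemma-L-1d}. Let $|u|\le M$ on $\overline{\Omega}$, fix $z_0\in\Omega$ and set $R=\dist(z_0,\partial\Omega)$. I will bound $u(z_2)-u(z_1)$ for $z_1,z_2\in B(z_0,R/4)$. Let $x$ be the point on the ray emanating from $z_1$ through $z_2$ with $|x-z_1|=R/2$; since $|x-z_0|\le|x-z_1|+|z_1-z_0|<R$ we have $x\in\Omega$, and $z_2$ lies on the segment $[z_1,x]$ because $|z_2-z_1|<R/2$. Writing $z_2=t_2 x+(1-t_2)z_1$ with $t_2=|z_2-z_1|/|x-z_1|$, the definition of $\alpha$-convexity applied to the pair $x$, $z_1$ gives $u(z_2)\le v(t_2)$, where $v$ is the solution of \eqref{Mikko} with $v(0)=u(z_1)$ and $v(1)=u(x)$.

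The key point is that $v$ is concave: from the equation $\alpha v''=-(1-\alpha)|v'|^2\le0$, so $v$ lies below its tangent line at $0$, namely $v(t)\le v(0)+v'(0)t$. Hence
\[
u(z_2)-u(z_1)\le v(t_2)-v(0)\le v'(0)\,t_2.
\]
Differentiating the explicit formula \eqref{eq:v} yields $v'(0)=\tfrac{1}{K_\alpha}\bigl(e^{(u(x)-u(z_1))K_\alpha}-1\bigr)$, and since $|u(x)-u(z_1)|\le 2M$ this is bounded above by the constant $C_{M,\alpha}:=\tfrac{1}{K_\alpha}(e^{2MK_\alpha}-1)$, irrespective of the sign of $u(x)-u(z_1)$. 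Combining this with $t_2=|z_2-z_1|/(R/2)$ gives
\[
u(z_2)-u(z_1)\le \frac{2C_{M,\alpha}}{R}\,|z_2-z_1|.
\]

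Exchanging the roles of $z_1$ and $z_2$, that is, extending the ray in the opposite direction to a point $x'\in\Omega$ with $|x'-z_2|=R/2$, produces the matching bound for $u(z_1)-u(z_2)$, so that $|u(z_1)-u(z_2)|\le \tfrac{2C_{M,\alpha}}{R}|z_1-z_2|$ on $B(z_0,R/4)$. Since any compact $K\subset\Omega$ has $\dist(K,\partial\Omega)=:d>0$, every $z_0\in K$ satisfies $R\ge d$, and the estimate holds on $K$ with the uniform constant $2C_{M,\alpha}/d$, which yields local Lipschitz continuity in $\Omega$. The only genuine work is geometric bookkeeping: one must keep the extended far point inside $\Omega$ (which is what forces the constant to degenerate near $\partial\Omega$) and track the two orientations to recover a two-sided bound; the sign of $u(x)-u(z_1)$ poses no obstacle, since both the tangent inequality and the bound $v'(0)\le C_{M,\alpha}$ hold regardless. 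Finally, observe that $C_{M,\alpha}\to\infty$ as $\alpha\to0$, which is precisely consistent with the failure of the statement for quasiconvex functions.
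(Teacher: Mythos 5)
Your proof is correct. It shares the paper's basic strategy --- control $u$ along segments by the explicit one-dimensional solution of Lemma~\ref{lemma-L-1d} and exploit that its slope near $t=0$ is bounded in terms of $\sup|u|$ and the distance to $\partial\Omega$ --- but the way you obtain a two-sided estimate is genuinely different. The paper anchors at a single point $z_0$, bounds $u$ from above by $v(|z-z_0|)$ and from below by $v(-|z-z_0|)$, and for the lower bound it must extend $v$ to the maximal interval $(-\delta,+\infty)$ of Remark~\ref{rem:delta} and invoke the reverse inequality outside $(0,1)$ from Remark~\ref{rem:outside} (itself a comparison argument); the conclusion is a cone condition at $z_0$. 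You instead compare two arbitrary nearby points $z_1,z_2$, use only the forward inequality $u(tx+(1-t)z_1)\le v(t)$ with the far endpoint $x$ placed beyond $z_2$, and convert it into a linear bound via the concavity of $v$ (from $\alpha v''=-(1-\alpha)|v'|^2\le 0$) together with the explicit value of $v'(0)$; the opposite inequality comes for free by swapping the roles of $z_1$ and $z_2$. This bypasses Remarks~\ref{rem:delta} and~\ref{rem:outside} entirely and yields an explicit local Lipschitz constant $2C_{M,\alpha}/R$, which makes transparent both the degeneration as $\alpha\to 0$ and as the base point approaches $\partial\Omega$. The only cosmetic point is $\alpha=1$: there $K_\alpha=0$, the solution is affine, and your formula for $v'(0)$ must be read as its limit $u(x)-u(z_1)$, so $C_{M,1}=2M$ and the argument goes through verbatim.
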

	
	\begin{proof}
	First we estimate $u$ from above.
Let $z_0\in\Omega$ and $r>0$ be such that $B_r(z_0)\subset\Omega$.
Since $u$ is $\alpha$-convex and bounded it turns out that for every point $\hat{z} \in \partial B_{r/2}(z_0)$ and every $t \in [0,1]$ we have
	$$
	u(t \hat{z} + (1-t) z_0 ) \leq v(t)
	$$ 
	with $v$ the solution to 
	$$
		\left\{
	\begin{array}{l}
\displaystyle L_\alpha v (t) =0, \qquad t \in (0,1), \\
\displaystyle v (0) = u(z_0), \\ 
v(1) =  \max u \geq u(\hat{z}).
\end{array}
\right.
	$$
	
Next we estimate $u$ from below. Recall that $v$ as above is defined for $t>-\delta$ where $\delta=\delta(|\max u-u(z_0)|,\alpha)>0$, see Remark~\ref{rem:delta}.
We have that
	$$
	u(t \hat{z} + (1-t) z_0 ) \geq v(t)
	$$ 
	for $t\in(-\delta,0)$, see Remark~\ref{rem:outside}.

We conclude that 
\[
v(-|z-z_0|)\leq u(z)\leq v(|z-z_0|)
\]
for every $z\in B_{R} (z_0)$ with $R=\min\{r/2,r\delta/4\}$. 
Since the derivative of $v$ in $[-\delta/2,1]$ is bounded we obtain that
the graph of $u$ is outside a cone with center at $(z_0,u(z_0))$, see Figure~\ref{fig:lip}.
We have proved that
$$
|u(z)- u(z_0)| \leq L |z-z_0|.
$$
\end{proof}	
	\begin{center}
\begin{figure}[h]
\centering
\begin{tikzpicture}[scale=1.5]
	\draw[fill=blue!30,draw=blue!30] plot[smooth, samples=100, domain=0.5:1] (\x,ln \x) -| (-1.5,ln 0.5) -- cycle;
	\draw[fill=blue!30,draw=blue!30] plot[smooth, samples=100, domain=-1:1] (\x,{ln(2-\x)}) -| (-1.5,ln 3) -- cycle;
	\draw[fill=blue!30,draw=blue!30] plot[smooth, samples=100, domain=1:3] (\x,ln \x) -| (3.5,0) -- cycle;
	\draw[fill=blue!30,draw=blue!30] plot[smooth, samples=100, domain=1:1.5] (\x,{ln(2-\x)}) -| (3.5,0) -- cycle;
		\draw plot [domain=0.5:3,samples=100] (\x,{ln(\x)});
		\draw plot [domain=-1:1.5,samples=100] (\x,{ln(2-\x)});
\draw [draw=red](0.6,ln 0.5) -- (1.4,-ln 0.5);
\draw [draw=red](1.4,ln 0.5) -- (0.6,-ln 0.5);
\fill[white,path fading=east] (-1.51,ln 0.5-0.01) rectangle (-1,ln 3+0.01);
\fill[white,path fading=west] (3,ln 0.5-0.01) rectangle (3.5+0.01,ln 3+0.01);
\end{tikzpicture}
	\caption{We have $v$ in black, the limit of the cone in red and the graph of $u$ must be inside the light blue region.}
	\label{fig:lip}
	\end{figure}
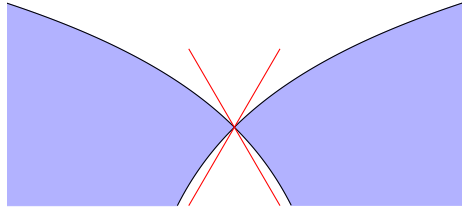
	\end{center}
	
	\begin{remark}
	For $\alpha=0$ the previous result does not hold. There are quasiconvex functions that are not continuous.
	For example, take
	$$
	u(x) = 
	\left\{
	\begin{array}{ll}
	0, \qquad & t \in [0,1/2), \\[4pt]
	1, \qquad & t \in [1/2,1], 
	\end{array}
	\right.
	$$
	that is quasiconvex in $[0,1]$.  
	\end{remark}
	
		\begin{remark}
	The obtained Lipschitz regularity is optimal since a cone
	$$
	u(z) = |z|
	$$
	is convex, and hence $\alpha$-convex for every $\alpha$, but it is not better than
	Lipschitz.
	
	Also notice that the optimal $\delta$ in the previous proofs goes to zero as
	$\alpha \to 0$. Therefore, we do not have a local Lipschitz 
	constant that is uniform in $\alpha$. This is natural (and expected) since for 
	$\alpha=0$ we have quasiconvex functions that can be discontinuous
	inside $\Omega$.
	\end{remark}

	\begin{remark} 
	A bounded $\alpha$-convex function $u$ may not be Lipschitz continuous in $\overline\Omega$.
	In fact, it may not be continuous up to the boundary, for example $u$ given by $u(x)=0$ in $[0,1)$ and $u(1)=1$ is not continuous and, since it is convex, it is $\alpha$-convex for every $\alpha\in[0,1]$.
	
	Also, it may happen that the Lipschitz constant deteriorates when approaching the boundary.
	For example, take $u(x)=-\sqrt{x}$ in $[0,1]$.
	\end{remark}
	
	\begin{remark} For a general operator $L$ the previous proof works provided $L$
	satisfies that  
 solutions to 
	$$
	\left\{
	\begin{array}{l}
\displaystyle L v (t) =0, \qquad t \in (0,1), \\
\displaystyle v(0) = a, \\ v(1) = b,
\end{array}
\right.
	$$ 
	are Lipschitz continuous and depend continuously on the boundary data $a,b$.
	\end{remark}
	
	\subsection{Second part of the proof of Theorem \ref{teo.1.intro}: Viscosity subsolution}

Now we will prove the second part of Theorem \ref{teo.1.intro}.
	We have already proven in Theorem \ref{teo.cont} that $\alpha$-convex functions are Lipschitz continuous.
	It remains for us to show that a function is $\alpha$-convex if and only if it is a subsolution to 
	\begin{align}
\label{eq:Lu}
			\mathcal{L}_\alpha  u (z) :&= 
	 \inf_{
	\substack{x,y \in \overline{\Omega}  \\
		z=t_0 x+ (1-t_0)y} }
		L_\alpha (u) (t_0)  \nonumber
		\\
		& =\inf_{|v|=1} \! \! 
		\Big\{\alpha \langle D^2 u(z) v, v \rangle \! + \! (1-\alpha)  |\langle D u(z) ,v \rangle |^2 \Big\} \! = 0, \qquad  z \in \Omega,
\end{align}
in terms of one dimensional test functions as in Definition \ref{defi-visc}.

	Actually we notice that we have two natural notions of viscosity solution to 
this equation.
		In Definition \ref{defi-visc.N} (that corresponds to what is usual in the viscosity theory), we test with $N-$dimensional functions 
		$\phi:\Omega \to \mathbb{R}$ that touches
		$u$ from above at $z\in \Omega$ and
		we ask for 
		$$
\alpha \langle D^2 \phi (z) v, v \rangle \! + \! (1-\alpha)  |\langle D \phi (z) ,v \rangle |^2  \geq 0
		$$
		for any direction $v \in\mathbb{S}^{N-1}$. 	We also assume the reverse inequality when the test function touches $u$ from below at $z$ in the 
		$N-$dimensional set $\Omega$. 
		
		The alternative definition (Definition \ref{defi-visc} below) changes the definition of a subsolution: we first take a direction 
		$v \in\mathbb{S}^{N-1}$ and then
		a 
		$1-$dimensional test function 
		$\phi$ that touches $u$ from above at $z$ 
		in the $1-$dimensional set  
		$\{ z+ t v \} \cap \Omega$. 
		Notice that now $\phi$ needs only to be defined in the $1-$dimensional set and not in the whole 
		$\Omega$. Here
		we ask for 
		$$
	\alpha \phi '' (z)  \! + \! (1-\alpha)  | \phi ' (z) |^2  \geq 0.
		$$

		Observe that if $u$ is a viscosity solution according to this second definition then it is a solution according to the first one.
		This is due to the fact that when an $N-$dimensional test function $\phi$ touches 
		$u$ from above/below at $z$ in $\Omega$, then the restriction
		of $\phi$ to any line $\{z+t v\}$, touches $u$ from above/below at $z$ 
		inside the line.
		The converse is more delicate since given a $1-$dimensional test function that touches $u$ 
		in a segment there is no immediate way of obtaining an $N-$dimensional test function $\psi$
		that touches $u$ in $\Omega$ and such that the restriction of $\psi$ to the segment is $\phi$. However, both notions of solution are equivalent as later recorded in Remark~\ref{rem:conincide} since 
		we can prove uniqueness in Corollary~\ref{teo.uni} in the sense of Definition \ref{defi-visc.N}.
		
	\begin{definition}[Viscosity solutions using $1$-d tests for
	subsolutions] \label{defi-visc} \
		An upper semicontinuous function $u\colon\mathbb{R}^N\to\mathbb{R}$  is a 
		\textit{viscosity subsolution}  of \eqref{eq:Lu}
				if  for any $z\in \Omega,$ any pair 
		$x,y\in\Omega,$ such that $z=t_0 x+ (1-t_0)y$ for some $t_0 \in (0,1)$, 
		 any test function $\phi\in C^{2}(\mathbb{R})$ 
		such that 
		$$
		\phi(t_0)= u(z)\text{ and }\phi(t)\ge u(tx + (1-t)y) \quad t\in (0,1),
		$$
		we have
		\[
			L_\alpha \phi (t_0 ) \ge 0 .
		\]
		
		A lower semicontinuous function $u:\Omega \to\mathbb{R}$  is a 
		\textit{viscosity supersolution}  of 
		\eqref{eq:Lu} 
		if for any 
		$z\in \Omega,$ 
		any test function $\phi\in C^{2}(\Omega)$ 
		such that $$\phi(z)= u(z) \text{ and }\phi(y)\le u(y) \text{ in } \Omega,$$ 
we have		
		\[
			\mathcal{L}_{\alpha} \phi (z)\leq 0 .
		\]

		Finally, we say that $u$ is a viscosity solution of \eqref{eq:Lu} when it is both a 
		viscosity subsolution and a viscosity supersolution of	\eqref{eq:Lu}.
	\end{definition}

	A function $u:\Omega \to \mathbb{R}$ is a viscosity subsolution 
		of \eqref{eq:Lu} if only if for any $z\in\Omega$ and any $x,y\in \overline{\Omega}$
		with  $z=t_0 x+ (1-t_0)y$ for some $t_0 \in (0,1)$
		we have that the function $w(t)=u(tx+(1-t)y)$, $t \in (0,1)$, is a viscosity subsolution of the 
		one-dimensional problem
		\[
			L_\alpha w(t)=0 \quad \text{ for } t \in (0,1).
		\]
		
\begin{remark}
\label{rem:max}
In the introduction we claimed that $v(t) = \max \left\{ u(x),u(y)\right\}$ is the viscosity solution to the equation $|v' (t)|=0$ 
	in the interval $(0,1)$ with boundary conditions $v(1) = u(x) $ and $v(0) = u(y)$.
	To be more precise, if $u(x)>u(y)$ the solution is given by $v(t) = \max \left\{ u(x),u(y)\right\}$ for $t\in(0,1]$ and $v(0)=u(y)$.

This function verifies the boundary condition in the following sense.
For any test function $\phi$ that touches the upper envelope of $u$ from above we have $\max\{|\phi'|, g-\phi\}\geq 0$. 
And for any test function $\phi$ that touches the lower envelope of $u$ from below we have $\min\{|\phi'|, g-\phi\}\leq 0$. 
\end{remark}

	\begin{proof}[Proof of Theorem \ref{teo.1.intro}]
	We already showed in Theorem \ref{teo.cont} that $\alpha$-convex functions are Lipschitz continuous.
	To show that a function is $\alpha$-convex if and only if it is a subsolution to (\ref{eq:Lu}) assume first that $u$ is $\alpha$-convex. To show that 
	\begin{equation} \label{convex-envelope-X.99.88}
	\mathcal{L}_\alpha  u (z) := 
	 \inf_{
	\substack{x,y \in \overline{\Omega}  \\
		z=t_0 x+ (1-t_0)y} } L_\alpha (u) (t_0) \geq 0, \qquad z \in \Omega,
		\end{equation}
	in the viscosity sense we argue by contradiction.
	Hence, assume that there exists a point
	$z_0 \in \Omega$, two other points $x_0, y_0 \in \overline{\Omega}$
	with $z_0=t_0 x_0+ (1-t_0)y_0$
	 and a smooth $1$-dimensional
	test function $\varphi$ that touches $t \mapsto u (t x_0 + (1-t)y_0)$ from above in the interval $(0,1)$ at $t_0$ with
	$$
	L_\alpha \varphi (t_0) < 0. 
	$$
	As $\varphi$ is smooth we have that there exists $\delta >0$ such that
	$$
	L_\alpha \varphi (t) < 0
	$$
	for every $t \in (t_0 -\delta, t_0 + \delta)$.

	Now, consider the segment $t x_0 + (1-t) y_0$ with $t \in (t_0-\delta, t_0+\delta)$ and $v$ the solution to
	$$
	\left\{
	\begin{array}{l}
\displaystyle L_\alpha v (t) =0, \qquad t \in (0,1), \\
\displaystyle v(0) = u((t_0-\delta)x_0 + (1-(t_0-\delta))y_0), \\ 
v(1) = u((t_0+\delta)x_0 + (1-(t_0+\delta))y_0).
\end{array}
\right.
	$$
	
	Since $L_\alpha$ behaves well under scaling (see Remark~\ref{rem:scaling})
we know that 
	$$
	w(t) = v \Big(  \frac{1}{ 2 \delta} ( t- (t_0-\delta ) )  \Big)
	$$ 
	solves the equation 
	$L_\alpha w =0$ for $t \in (t_0-\delta, t_0+\delta)$ and $\varphi$ is a strict supersolution
	to the same equation in that interval. 
	Since $L_\alpha$ has a strict comparison principle (see Lemma \ref{lem:1dcomp}),  and 
	we have $$u((t_0-\delta)x_0 - (1-(t_0-\delta) y_0) \leq \varphi (t_0- \delta)$$ and $$u((t_0+\delta)x_0 - (1-(t_0+\delta) y_0) \leq \varphi (t_0+\delta)$$
	 (recall that $\varphi$ touches $u$ from above at $t_0x_0 + (1-t_0)y_0$
	on the line $tx_0 +(1- t) y_0$) we obtain
	$$
	w(t) < \varphi (t), \qquad \text{ for all } t \in  (t_0-\delta, t_0+\delta).
	$$
	In particular, it holds that 
	$$
	 v(1/2) = w(t_0) < \varphi (t_0) = u (x_0)
	$$
	a contradiction with the fact that $u$ is $\alpha$-convex.

	Next we show that the property of being a subsolution implies the $\alpha$-convexity.  
	We assume that $u$ solves 
	\begin{equation} \label{convex-envelope-X.556}
	\mathcal{L}_\alpha  u (z) := 
	 \inf_{
	\substack{x,y \in \overline{\Omega}  \\
		z=t_0 x+ (1-t_0)y} } L_\alpha (u) (t_0) \geq 0, \qquad z \in \Omega.
		\end{equation}
		To show that $u$ is $\alpha$-convex we argue again by contradiction. 
		Assume that there are two points $x_0, y_0 \in \Omega$ and $z_0 = t_0 x_0 + (1-t_0) y_0$
		such that
		$$
		u(z_0) > v(t_0)
		$$
		with $v$ the solution to 
		$$
		\left\{
	\begin{array}{l}
\displaystyle L_\alpha v (t) =0, \qquad t \in (0,1), \\
\displaystyle v(0) = u(y_0), \\ 
v(1) = u(x_0).
\end{array}
\right.
	$$
	Consider $w$ the solution to
	$$
	\left\{
	\begin{array}{l}
\displaystyle L_\alpha w (t) = - \eta^2, \qquad t \in (0,1), \\
\displaystyle w(0) = u(y_0), \\ w(1) = u(x_0),
\end{array}
\right.
	$$
	with a small $\eta >0$, see Lemma~\ref{lem:1d-eta}.
	By Lemma~\ref{lem:convw}, the solution $w$ converges uniformly to $v$.
	Hence, if $\eta$ is small we still have that  
	$$
		u(z_0) > w(t_0).
		$$
	Now take $k>0$ such that
	$w+k$ touches $u$ from above at some point $t_1$ in the segment
	 $t x_0 + (1-t) y_0$ (notice that at the extreme points $t=0$ and $t=1$ we have 
	 $w+k > u$ for any $k>0$, hence $t_1 \in (0,1)$. At this point $t_1$ we have
	 $L_\alpha w (t_1) = - \eta^2$ that contradicts the fact that $u$ is a solution to \eqref{convex-envelope-X.556}.
	\end{proof}
	
		
		\subsection{Supporting $\alpha$-hyperplane}
		
 Next we define a concept that will be analogous to a supporting hyperplane for convex functions.
	We look at a hyperplane as a function that is affine in one direction and does not depend on the orthogonal coordinates. 
	Hence, in the context of $\alpha$-convex functions, the following is a natural generalization.

\begin{definition}[$\alpha$-hyperplane]
\label{def:alpha-hyperplane}
	An {\it $\alpha$-hyperplane} passing through $z_0$  with direction $\nu$ is defined as a function of the form
	$$
	\pi_\alpha (z) = v(\langle z-z_0, \nu \rangle)
	$$
	with $v$ a solution to
	$$
	L_\alpha (v)(t) = \alpha v''(t) + (1-\alpha) |v' (t)|^2 =0
	$$
	for $t$ in some interval of the form $I = (-\delta, + \infty)$. 
\end{definition}

\begin{remark} \label{1-d--2-d}
	 If we have a function that depends only on the first coordinate, then it is 
	$\alpha$-convex in a multidimensional domain if and only if
	it is $\alpha$-convex in the $1$-dimensional projection of the domain into the first coordinate.
	Indeed, if we have $$u(z_1,z_2, \dots,z_n) = f(z_1)$$ then for every $\alpha \neq 0$:
	$
	u$  is $\alpha$-convex in $\Omega \subset \mathbb{R}^n$  can be equivalently stated by saying that  $f$ is $\alpha$-convex 
	in $I\subset \mathbb{R}$. 
	In fact, we have that
	$$
	\begin{array}{l}
	\displaystyle 
	 \mathcal{L}_\alpha  u (z) =  \inf_{v=(v_1,v_2,\dots,v_n) \colon |v|=1} 
		\alpha \langle D^2 u(z) v, v \rangle + \alpha  |\langle D u(x), v \rangle |^2  \\[7pt]
		\displaystyle \qquad \qquad
		=  \inf_{v=(v_1,v_2,\dots,v_n) \colon |v|=1} \alpha f'' (z_1) v_1^2 + (1-\alpha) (f' (z_1))^2  v_1^2,
		\end{array}
		$$
		and hence we have
		$$
		 \mathcal{L}_\alpha  u (z)  \geq 0 \text{ is equivalent to } L_\alpha f (z_1) \geq 0.
		$$
		In particular, an $\alpha$-hyperplane is $\alpha$-convex.
	\end{remark}

Observe that an $\alpha$-hyperplane can be determined by choosing a base point $z_0$, 
the value of $\pi_\alpha(z_0)$, the direction $\nu$ and $v'(0)$, or equivalently by  $z_0$, $\pi_\alpha(z_0)$ and $ D  \pi_\alpha(z_0)$.
Now, notice that, recalling Remark~\ref{rem:delta}, that this function is defined in a half-space that contains $z_0$ in its interior.

Next we show that an $\alpha$-convex function has supporting $\alpha$-hyperplanes
at every point inside $\Omega$
(that is, given a point $z\in \Omega$, there exists an $\alpha$-hyperplane touching the $\alpha$-convex
function from below at $z$). The idea is to take the comparison function $v$ and the two suitable points in the definition of the $\alpha$-convexity, let the second point approach the first one in Lemma \ref{Lem:touchingFromBelow}, and then show that $v$ tends to the desired function. To show that $v$ indeed is the function that defines the $\alpha$-hyperplane that twe look for, we first assume differentiability of $u$ in Lemma~\ref{lemma.touching.below.diff} and then complete the proof by an approximation argument.

	\begin{theorem}[Supporting $\alpha$-hyperplane] \label{chess}
	For $\alpha \neq 0$ let $u$ be a bounded $\alpha$-convex function in $\Om$. 
	Then, given $z_0 \in \Omega$, there exists an {\it $\alpha$-hyperplane} 
	$\pi_\alpha$ that touches $u$ from below at $z_0$.
	\end{theorem}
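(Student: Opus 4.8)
The plan is to reduce the statement to the classical supporting-hyperplane theorem for convex functions by means of the monotone substitution $w:=e^{K_\alpha u}$, where $K_\alpha=(1-\alpha)/\alpha$. The point is that this change of variables turns $\alpha$-convexity into ordinary convexity. Indeed, the explicit formula of Lemma~\ref{lemma-L-1d} shows that if $v$ solves $L_\alpha v=0$, then $e^{K_\alpha v}$ is affine in $t$: writing $v(t)=\frac{1}{K_\alpha}\ln(C_1+K_\alpha t)+C_2$ gives $e^{K_\alpha v(t)}=e^{K_\alpha C_2}(C_1+K_\alpha t)$. Hence, applying the increasing map $s\mapsto e^{K_\alpha s}$ to the defining chord inequality $u(tx+(1-t)y)\le v(t)$ of $\alpha$-convexity, and using that $e^{K_\alpha v(t)}$ is the affine interpolation between $e^{K_\alpha u(y)}$ and $e^{K_\alpha u(x)}$, I obtain
$$
e^{K_\alpha u(tx+(1-t)y)}\le (1-t)\,e^{K_\alpha u(y)}+t\,e^{K_\alpha u(x)}.
$$
This is precisely the statement that $w=e^{K_\alpha u}$ is convex in $\overline{\Omega}$.

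Next I would use that $w$ is convex, finite and continuous on the open convex set $\Omega$ (continuity and finiteness follow from the Lipschitz bound of Theorem~\ref{teo.cont}), so that at the given point $z_0\in\Omega$ the convex function $w$ admits a subgradient $q\in\partial w(z_0)$, i.e. an affine supporting function $\ell(z)=w(z_0)+\langle q,z-z_0\rangle$ with $\ell\le w$ and $\ell(z_0)=w(z_0)$. It then remains to pull this support of $w$ back to the $u$-level. Set $\pi_\alpha(z):=\frac{1}{K_\alpha}\ln\ell(z)$, which is well defined where $\ell>0$, in particular near $z_0$ since $\ell(z_0)=w(z_0)>0$. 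Because $\ln$ is increasing and $\ell\le w=e^{K_\alpha u}$, we get $\pi_\alpha\le u$ with equality at $z_0$.

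The one genuine thing to check is that $\pi_\alpha$ is an $\alpha$-hyperplane in the precise sense of Definition~\ref{def:alpha-hyperplane}. If $q\neq 0$, set $\nu=q/|q|$ and $s=\langle z-z_0,\nu\rangle$; then $\ell$ depends on $z$ only through $s$, namely $\ell=w(z_0)+|q|\,s$, so $\pi_\alpha(z)=\frac{1}{K_\alpha}\ln(w(z_0)+|q|s)$. Matching this against the solution form $\frac{1}{K_\alpha}\ln(C_1+K_\alpha s)+C_2$ of Lemma~\ref{lemma-L-1d} (take $C_1=K_\alpha w(z_0)/|q|$) shows that $s\mapsto\pi_\alpha$ is an increasing $L_\alpha$-solution defined precisely on the half-line $(-\delta,+\infty)$ with $\delta=w(z_0)/|q|>0$, exactly the admissible domain required in Definition~\ref{def:alpha-hyperplane} (cf. Remark~\ref{rem:delta}). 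In the degenerate case $q=0$, the point $z_0$ is a global minimizer of $w$, hence of $u$, and the constant function $\pi_\alpha\equiv u(z_0)$ serves as a (degenerate) supporting $\alpha$-hyperplane.

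I expect essentially no hard obstacle on this route: the substitution carries all the weight, and the only care needed is the bookkeeping that the pulled-back affine support is genuinely of the prescribed $\alpha$-hyperplane form, together with the separate treatment of $q=0$. An alternative, more self-contained route --- the one for which the surrounding Lemmas~\ref{Lem:touchingFromBelow} and \ref{lemma.touching.below.diff} are set up --- is to argue directly: take the comparison solution on a short segment issuing from $z_0$ in a direction $\nu$, extend it backwards where Remark~\ref{rem:outside} places it below $u$, and let the segment shrink; for $u$ differentiable at $z_0$ one chooses $\nu=Du(z_0)/|Du(z_0)|$ and slope $|Du(z_0)|$ so that the limiting solution is tangent, and then removes the differentiability assumption by approximation. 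On that route the main difficulty is upgrading control along a single line to control in all directions near $z_0$, which is exactly what differentiability (and then a limiting argument over points of differentiability, available almost everywhere by the Lipschitz regularity of Theorem~\ref{teo.cont}) is used to supply.
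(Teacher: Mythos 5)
Your proof is correct, and it takes a genuinely different route from the paper. The paper proceeds intrinsically: Lemma~\ref{Lem:touchingFromBelow} produces a one-dimensional solution touching $u$ from below along a single line by shrinking secants; Lemma~\ref{lemma.touching.below.diff} upgrades this to a full supporting $\alpha$-hyperplane at points of differentiability by choosing $\nu=Du(z_0)/|Du(z_0)|$ and running a comparison argument in every direction of the half-space; and the theorem is then obtained by sup-convolution approximation through points of differentiability (available a.e.\ by the Lipschitz bound). Your route instead observes that for $\alpha\in(0,1)$ the explicit formula of Lemma~\ref{lemma-L-1d} makes $s\mapsto e^{K_\alpha s}$ an exact conjugacy between $L_\alpha$-solutions and affine functions, hence between $\alpha$-convexity of $u$ and classical convexity of $w=e^{K_\alpha u}$; the classical subgradient of $w$ at $z_0$ then pulls back, via $\frac{1}{K_\alpha}\ln(\cdot)$, to exactly an $\alpha$-hyperplane in the sense of Definition~\ref{def:alpha-hyperplane}, with the correct maximal half-space $\{\langle z-z_0,\nu\rangle>-\delta\}$, $\delta=w(z_0)/|q|$ (this is consistent with Remark~\ref{rem:delta}), and with the constant $\alpha$-hyperplane handling $q=0$. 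Your argument is shorter and avoids both the directional comparison step and the sup-convolution approximation, at the price of being tied to the specific family $L_\alpha$ through its explicit integrability; the paper's argument is structurally closer to what one would attempt for a general operator $L$ admitting only a comparison principle. Two small points you should add: the substitution degenerates at $\alpha=1$ (where $K_\alpha=0$ and $w\equiv 1$), so that endpoint must be dispatched separately as the classical supporting-hyperplane theorem, which is harmless since $1$-convexity is ordinary convexity; and you should note explicitly that $\Omega$ is convex (as assumed throughout the paper), so that the classical subdifferential of the finite convex function $w$ is nonempty at every interior point.
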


In the case of quasiconvex functions we have that the level sets are convex. 
So given $z_0$ we can consider the set $\{x:u(x)<u(z_0)\}$ and we get that there exists $\nu$ such that 
\[
\{x:u(x)<u(z_0)\}\subset \{x:\langle x-z_0, \nu \rangle<0\}.
\]
Therefore if we consider $\pi$ the constant function $u(z_0)$ we have that $u\geq \pi$ in the half-space $\{x:\langle x-z_0, \nu \rangle\geq 0\}$.
We can see the existence of supporting $\alpha$-hyperplanes for $\alpha$-convex functions interpolating between what can be obtained for convex and quasiconvex functions.

We begin our way to the proof of Theorem~\ref{chess} with the following lemma.
\begin{lemma}
\label{Lem:touchingFromBelow}
Let $u:\Omega\to\R$ be an $\alpha$-convex function, $z_0\in\Omega$ and $\nu\in\R^N$.
Then there exists $v$ such that $L_\alpha v (t) =0$,  $v(0)=u(z_0)$ and
\[
v(t)\leq u(z_0+\nu t)
\]
for every $t$ such that both functions are defined.
\end{lemma}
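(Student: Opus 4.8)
The plan is to realize the desired $\alpha$-hyperplane as a limit of the one-dimensional comparison solutions appearing in the definition of $\alpha$-convexity, letting the two anchoring points coalesce at $z_0$. Write $\phi(t) = u(z_0 + \nu t)$, which is defined on a neighborhood of $0$ and, by Theorem~\ref{teo.cont}, is Lipschitz there with some constant $L$. For each small $s>0$ let $v_s$ be the solution of $L_\alpha v_s = 0$ on $[0,s]$ with $v_s(0) = u(z_0)$ and $v_s(s) = \phi(s)$; this is exactly the comparison solution obtained by applying $\alpha$-convexity to the points $x = z_0 + \nu s$ and $y = z_0$ in the rescaled form of Remark~\ref{rem:scaling}. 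Consequently $\phi(t) \le v_s(t)$ for $t \in (0,s)$, while Remark~\ref{rem:outside} gives the reverse inequality $\phi(t) \ge v_s(t)$ for every $t$ lying outside $(0,s)$ in the common interval of definition. The aim is to show that $v_s$ converges, as $s \to 0^+$, to a solution $v$ of $L_\alpha v = 0$ with $v(0) = u(z_0)$: once this is known, fixing any $t \ne 0$ and sending $s \to 0$ so that eventually $t \notin (0,s)$, the inequality $\phi(t) \ge v_s(t)$ passes to the limit and yields $v \le \phi$ on the whole common domain, with equality at $t = 0$.

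Since a solution of $L_\alpha v = 0$ is determined by $v(0)$ and $v'(0)$, and $v_s(0) = u(z_0)$ is fixed, everything reduces to controlling the initial slopes $v_s'(0)$. I would first show these slopes are monotone in $s$: given $s_1 < s_2$, both $v_{s_1}$ and $v_{s_2}$ solve $L_\alpha = 0$ on $[0,s_1]$, they agree at $0$, and at $s_1$ we have $v_{s_2}(s_1) \ge \phi(s_1) = v_{s_1}(s_1)$ because $s_1 \in (0,s_2)$ and $\phi \le v_{s_2}$ there. The comparison principle of Lemma~\ref{lem:1dcomp} then forces $v_{s_2} \ge v_{s_1}$ on $[0,s_1]$, and since they coincide at $0$ this gives $v_{s_2}'(0) \ge v_{s_1}'(0)$; hence $s \mapsto v_s'(0)$ is non-decreasing. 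Next I would bound these slopes from below: solutions of $L_\alpha v = 0$ are concave, since the equation gives $v_s'' = -K_\alpha |v_s'|^2 \le 0$, so $v_s'(0) \ge \frac{v_s(s) - v_s(0)}{s} = \frac{\phi(s) - \phi(0)}{s} \ge -L$. Monotonicity together with this lower bound forces $v_s'(0)$ to converge to a finite limit $\ell \ge -L$ as $s \to 0^+$.

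To conclude I would take $v$ to be the solution of $L_\alpha v = 0$ with initial data $v(0) = u(z_0)$ and $v'(0) = \ell$, defined on its maximal interval of the form $(-\delta, +\infty)$ as in Remark~\ref{rem:delta}. By continuous dependence on the data, explicit from the formula of Lemma~\ref{lemma-L-1d} (cf. Remark~\ref{rem:continuity1-d}), $v_s \to v$ pointwise on every compact subset of the common domain, which is exactly what is needed to pass to the limit in $\phi \ge v_s$ and obtain $v \le \phi$ with $v(0) = \phi(0) = u(z_0)$.

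The main obstacle is the second step: establishing that the initial slopes $v_s'(0)$ genuinely converge rather than oscillate or diverge as the two anchor points merge. The monotonicity supplied by the comparison principle, combined with the concavity-based lower bound, is what pins this down, and it is the crux of the argument. Once the slopes are controlled, the remainder is continuous dependence on initial conditions and a routine passage to the limit, with only mild care needed to ensure that the maximal intervals of the $v_s$ eventually contain any fixed compact set so that the limiting inequality holds throughout the common domain.
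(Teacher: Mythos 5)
Your proposal is correct and follows essentially the same route as the paper: both realize $v$ as the limit of the one-dimensional comparison solutions anchored at $z_0$ and $z_0+\nu s$ (the paper uses $s=1/n$), control the initial slopes via the Lipschitz bound on $u$ and the explicit formula, and conclude by passing to the limit in the inequality from Remark~\ref{rem:outside}. The only difference is that you obtain genuine convergence of $v_s'(0)$ through a monotonicity-plus-concavity argument, whereas the paper settles for uniform boundedness and extracts a convergent subsequence; your refinement is sound but not needed for the statement.
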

\begin{proof}
We consider $v_n$ the solution to the equation $L_\alpha v_n=0$ with $v_n(0)=u(z_0)$ and $v_n(1/n)=u(z_0+\nu /n)$.
Since $u$ is Lipschitz, we get that $v_n'(0)$ is uniformly bounded by looking at the explicit formula.
Therefore, there exists $\delta>0$ such that $v_n$ is defined in $[-\delta,\delta]$ for every $n$.
Also, we can take a subsequence such that it converges uniformly, that is, there exists $v$ such that $v_{n_k}\to v$ uniformly in $[-\delta,\delta]$. By stability of viscosity solutions, $v$ is a solution to the equation and for the limit it holds that  $v(t)\leq u(z_0+\nu t)$ recalling  Remark~\ref{rem:outside}.
\end{proof}

	Now, let us show that at every point where an $\alpha$-convex function $u$ is differentiable there is a supporting $\alpha$-hyperplane that touches $u$ from below (and next we will extend this property to every point in $\Omega$ by an approximation argument). 
	
	\begin{lemma} \label{lemma.touching.below.diff}
	For $\alpha \neq 0$ let $u$ be a bounded $\alpha$-convex function. Assume that
	$u$ is differentiable at $z_0 \in \Omega$, then there exists an {\it $\alpha$-hyperplane} 
	$\pi_\alpha$ that touches $u$ from below at $z_0$. 
\end{lemma}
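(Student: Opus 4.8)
The plan is to build the $\alpha$-hyperplane by matching the value and the gradient of $u$ at $z_0$ and then to verify it lies below $u$ everywhere by a one-dimensional comparison carried out along each segment issuing from $z_0$. Write $p=Du(z_0)$. In the main case $p\neq 0$, I set $\nu=p/|p|$ and let $v$ be the solution of $L_\alpha v=0$ supplied by Lemma~\ref{Lem:touchingFromBelow} for this direction, so that $v(0)=u(z_0)$ and $v(t)\le u(z_0+\nu t)$ on the whole line. Since $u$ is differentiable at $z_0$, the function $t\mapsto u(z_0+t\nu)$ is differentiable at $0$ with derivative $\langle p,\nu\rangle=|p|$; squeezing $v$ below it with equality at $t=0$ (one-sided derivatives from the left and from the right) forces $v'(0)=|p|$. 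I then define $\pi_\alpha(z)=v(\langle z-z_0,\nu\rangle)$, which by Remark~\ref{1-d--2-d} is an $\alpha$-hyperplane (Definition~\ref{def:alpha-hyperplane}) with $\pi_\alpha(z_0)=u(z_0)$ and $D\pi_\alpha(z_0)=|p|\nu=p$. In the degenerate case $p=0$ I simply take $v\equiv u(z_0)$, the constant solution, so that $\pi_\alpha\equiv u(z_0)$.

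Lemma~\ref{Lem:touchingFromBelow} only guarantees $\pi_\alpha\le u$ on the single line through $z_0$ in direction $\nu$, and the main obstacle is to upgrade this to $\pi_\alpha(z)\le u(z)$ for \emph{every} $z\in\Omega$, that is, to control $u$ in the directions orthogonal to $\nu$, where $\pi_\alpha$ is pinned down by no one-dimensional inequality. I would argue by contradiction, assuming that $u(z_*)<\pi_\alpha(z_*)=v(s_*)$ for some $z_*\in\Omega$, where $s_*=\langle z_*-z_0,\nu\rangle$. Recalling from Lemma~\ref{lemma-L-1d} and Remark~\ref{rem:delta} that $v$ is strictly increasing with domain $(-\delta,+\infty)$, and that $\pi_\alpha(z_*)$ is only meaningful when $s_*>-\delta$, this places us in a regime where all the objects below are defined.

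The key idea that removes the obstacle is to transport the one-dimensional supporting profile to the segment joining $z_0$ to $z_*$ using the scaling invariance. Consider $\gamma(t)=z_0+t(z_*-z_0)$ for $t\in[0,1]$; the $\alpha$-convexity of $u$ along this segment gives $u(\gamma(t))\le V(t)$, where $V$ solves $L_\alpha V=0$ with $V(0)=u(z_0)$ and $V(1)=u(z_*)$. Differentiability at $z_0$ yields $(u\circ\gamma)'(0^+)=\langle p,z_*-z_0\rangle=|p|\,s_*$, and since $u\circ\gamma\le V$ with equality at $t=0$ we get $V'(0)\ge |p|\,s_*$. On the other hand, by Remark~\ref{rem:scaling} the rescaled profile $\hat v(t):=v(s_* t)$ again solves $L_\alpha\hat v=0$, with $\hat v(0)=u(z_0)=V(0)$, $\hat v'(0)=s_*\,v'(0)=|p|\,s_*$ and $\hat v(1)=v(s_*)$. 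Thus $V$ and $\hat v$ are two solutions agreeing at $t=0$ with $V'(0)\ge\hat v'(0)$; since two distinct solutions of $L_\alpha(\cdot)=0$ can meet at most once (a consequence of the explicit formula, equivalently of the strong comparison principle, Lemma~\ref{lem:1dcomp}), it follows that $V\ge\hat v$ on $[0,1]$. Hence $u(z_*)=V(1)\ge\hat v(1)=v(s_*)$, contradicting the assumption, and therefore $\pi_\alpha\le u$ on $\Omega$.

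Finally, the degenerate case $p=0$ is the same argument with $\hat v\equiv u(z_0)$: one gets $V'(0)\ge 0$, and since every solution of $L_\alpha(\cdot)=0$ is monotone this forces $V(1)\ge u(z_0)$, so $z_0$ is a global minimum of $u$ and the constant $\pi_\alpha$ touches from below. I expect the heart of the proof to be exactly the comparison in the third paragraph, namely passing from a supporting line to a full supporting $\alpha$-hyperplane; the mechanism that makes it work is the interplay between the scaling invariance of $L_\alpha$, which lets the one-dimensional profile be reparametrized onto each segment, and the slope inequality $V'(0)\ge\hat v'(0)$, which is precisely the first-order information that differentiability of $u$ at $z_0$ provides.
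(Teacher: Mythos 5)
Your proposal is correct and follows essentially the same route as the paper: build $\pi_\alpha$ from the one-dimensional touching profile of Lemma~\ref{Lem:touchingFromBelow} in the direction of $Du(z_0)$, use differentiability to match $D\pi_\alpha(z_0)=Du(z_0)$, and then propagate the inequality $\pi_\alpha\le u$ to every point by a one-dimensional comparison along the segment from $z_0$, exploiting that the two ODE solutions agree in value and are ordered in slope at $t=0$. The only (cosmetic) differences are that you run the final comparison directly between $V$ and the rescaled profile $\hat v$ via the ``two distinct solutions meet at most once'' fact rather than the paper's strict-inequality-of-derivatives contradiction, and that you explicitly treat the case $Du(z_0)=0$, which the paper leaves implicit.
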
	

\begin{proof} First, assume that $ D  u (z_0) \neq 0$.
Let $\nu = \frac{ D  u}{| D  u|} (z_0)$.
By Lemma~\ref{Lem:touchingFromBelow} we know that there is a
solution to $L_\alpha v(t) =0$ in $(-\delta,+\infty)$ that touches $u$ from below in the line
$z_0 + t \nu$.
 We have $u(z_0) = v(0)$ and $ u (z_0 + t \nu) 
	\geq v(t)$ for $t\in (- \delta,+\infty)$ such that $u$ is defined. Take as $\pi_\alpha (z)$ the $\alpha$-hyperplane 
	associated with this solution and the direction $\nu$, that is, 
	$$
	\pi_\alpha (z) = v(\langle z-z_0, \nu \rangle).
	$$
We have that $$\pi_\alpha (z_0) = v(z_0) = u(z_0).$$ 
Moreover, we have that $t \mapsto (u-\pi_\alpha) (z_0 + t \nu)$
attains a minimum at $t=0$. Therefore,
since $u$ is differentiable at $z_0$ 
we have 
$$
\frac{\partial }{\partial t} (u-\pi_\alpha) (z_0 + t \nu) |_{t=0} = 0.
$$
Hence, since  $\nu$ is the direction of the gradient of $u$  we get
$$
\frac{\partial }{\partial t} \pi_\alpha (z_0 + t \nu)|_{t=0}
= \langle  D  \pi_\alpha (z_0), \nu \rangle = \langle  D  u (z_0), \nu \rangle = | D  u (z_0)|. 
$$
Therefore, we have
$$
 D  \pi_\alpha (z_0) = | D  u (z_0)| \nu =  D  u(z_0). 
$$

Let us show that $$\pi_\alpha (z) \leq u(z)$$ for every $z \in \Omega$ with $-\delta < \langle z-z_0, \nu \rangle $.
To this end take any such $z$ and consider $w$ the direction of $z-z_0$. 
Since we have $ D  \pi_\alpha (z_0) =  D  u(z_0) $ if we compute derivatives 
in the direction $w$ we have 
\begin{equation}
\label{derivatives}
\frac{\partial }{\partial t} u (z_0 + t w)|_{t=0} = \langle  D  u(z_0), w \rangle =
\langle  D  \pi_\alpha (z_0), w \rangle
=
\frac{\partial }{\partial t} \pi_\alpha (z_0 + t w)|_{t=0} . 
\end{equation}
Hence, the functions
$$
t \mapsto u (z_0 + t w) \qquad \mbox{ and }\qquad t\mapsto \pi_\alpha (z_0 + t w)
$$ 
are a subsolution and a solution to $L_\alpha v =0$ that coincide at $t=0$ and has the same derivative at $t=0$.
We claim that $u (z_0 + t w) < \pi_\alpha (z_0 + t w)$.
In fact, for the sake of contradiction, suppose that $u (z_0 + t_0 w) < \pi_\alpha (z_0 + t_0 w)$ and consider 
$$
\left\{
\begin{array}{l}
\displaystyle L_\alpha \tilde v (t) =0, \qquad t \in (0,t_0), \\
\displaystyle \tilde v(0) = u(z_0), \\ 
\tilde v(t_0) = u(z_0 + t_0 w).
\end{array}
\right.
	$$
Since $t\mapsto \pi_\alpha (z_0 + t w)$ solves the equation, $u (z_0) = \pi_\alpha (z_0)$ and $u (z_0 + t_0 w) < \pi_\alpha (z_0 + t_0 w)$, from Lemma~\ref{lem:1dcomp} we get $\tilde v(t)\leq \pi_\alpha (z_0 + t w)$ and therefore
$
\tilde v' (0) \leq
\frac{\partial }{\partial t} \pi_\alpha (z_0 + t w)|_{t=0}.
$
If $\tilde v' (0) = \frac{\partial }{\partial t} \pi_\alpha (z_0 + t w)|_{t=0}$ by the uniqueness of solution to the 1-dimensional problem we get that the functions coincide everywhere, which does not hold, so we conclude that
$$
\tilde v' (0) <
\frac{\partial }{\partial t} \pi_\alpha (z_0 + t w)|_{t=0}.
$$
But since $u$ is $\alpha$-convex, it is is a subsolution to the PDE. Then, from a comparison argument we get $u\leq \tilde v$ and therefore
$$
\frac{\partial }{\partial t} u (z_0 + t w)|_{t=0} \leq \tilde v' (0)
$$
which contradicts \eqref{derivatives}.
Hence we have shown that $u (z_0 + t w) \geq \pi_\alpha (z_0 + t w)$ and we conclude that $u(z) \geq \pi_\alpha (z)$.
\end{proof}

	Now, let us remove the assumption that $u$ is differentiable at the point by 
	using an approximation argument.

	\begin{proof}[Proof of Theorem~\ref{chess}] Take the supremal convolution of $u$,
		$$
		u_\gamma (x) = \sup_{y \in \Omega } \Big(  u(y) - \frac{1}{2\gamma} 
		|x-y|^2 \Big).
		$$
		Since $u$ is continuous by Theorem~\ref{teo.1.intro}, these functions $u_\gamma$ verify that
		$$
		\lim_{\gamma \to 0} u_\gamma = u 
		$$
		uniformly in $\overline{\Omega}$. Moreover, $u_\gamma$ is semiconvex 
		(and hence differentiable a.e. in $\Omega$) and, since $u$ is a subsolution to 
		 \begin{equation} \label{form-alter.99}
	 \begin{array}{l}
\displaystyle \mathcal{L}_\alpha  u (z) := \inf_{
	\substack{x,y \in \overline{\Omega}  \\
		z=t_0 x+ (1-t_0)y} } L_\alpha (u) (t_0)  \\[6pt]
\qquad \qquad \displaystyle =  \inf_{|v|=1} 
		\alpha \langle D^2 u(z) v, v \rangle + (1-\alpha)  |\langle D u(z), v \rangle |^2 \geq 0,
		\end{array}
		\end{equation}
		then $u_\gamma$ is also a subsolution in a slightly smaller domain,
		see \cite{CIL}. Therefore, $u_\gamma$ is also $\alpha$-convex. 
		
		Choose $z_\gamma$  a sequence of points such that $z_\gamma$ 
		is a differentiability point of $u_\gamma$ and 
		$$
		\lim_{\gamma \to 0} z_\gamma = z_0. 
		$$
		Using our previous result we have that there exist directions $\nu_\gamma$ and 
		solutions $v_\gamma$ of $L_\alpha (v_\gamma) (t) =0$ in $(-\delta, + \infty)$
		such that the corresponding 
		$\alpha-$hyperplanes given by 
		$$
		\pi_{\alpha, \gamma} (z) = v_\gamma (\langle z-z_\gamma, \nu_\gamma \rangle)
	$$
		touch $u_\gamma$ from below at $z_\gamma$, that is, 
		\begin{equation} \label{king}
		\pi_{\alpha, \gamma} (z_\gamma ) = u_\gamma (z_\gamma) 
		\end{equation}
		and 
		\begin{equation} \label{queen}
		\pi_{\alpha, \gamma} (z ) \leq u_\gamma (z) , \qquad \mbox{for } \langle z-z_\gamma, \nu_\gamma \rangle 
		> -\delta.
		\end{equation}
		Now, from compactness of $\{\nu : |\nu|=1\}$ we can extract a subsequence  $\gamma_j \to 0$
		such that $\nu_{\gamma_j} \to \nu_0$.
		Moreover, since $u$ is bounded so is $u_\gamma$ and $v_\gamma$, then	
		from the explicit formula for the solutions
		to $L_\alpha v_\gamma =0$, we can also extract a subsequence such that 
		$$
		v_{\gamma_j} \to v_0
		$$ 
		locally uniformly in $(-\delta, +\infty)$ with $v_0$ also a solution to $L_\alpha v_0=0$ by stability of viscosity solutions.
		Therefore, we have that 
		$$
		\pi_{\alpha, \gamma} (z) = v_\gamma (\langle z-z_\gamma, \nu_\gamma \rangle)
		\to \pi_{\alpha,0} (z) = v_0 (\langle z-z_0, \nu_0 \rangle)
	$$
	locally uniformly in $\{z : \langle z-z_\gamma, \nu_0 \rangle 
		> -\delta \}$.
		
		Now, we can pass to the limit in \eqref{king} and \eqref{queen} and obtain 
$$
		\pi_{\alpha, 0} (z_0 ) =  v_0 (0) = u (z_0) 
		$$
		and 
		$$
		\pi_{\alpha, 0} (z ) = v_0 (\langle z-z_0, \nu_0 \rangle)\leq u 
		(z) , \qquad \mbox{for } \langle z-z_0, \nu_0 \rangle 
		> -\delta,
		$$
		that is, $\pi_{\alpha,0}$ is an $\alpha$-hyperplane that touches $u$ from below at $z_0$.
	\end{proof}

		\subsection{Other properties}
		
	Convexity and quasiconvexity differ in how 
	they behave under monotone transformations.
	In fact, whether
	or not a function is convex depends on the numbers which the function assigns
	to its level sets, not just on the shape of these level sets. Then, a
	monotone transformation of a convex function need not be convex. That is,
	if $u$ is convex and $f:\mathbb{R} \to \mathbb{R}$ is increasing then 
	$f\circ u$ may fail to be convex (however, it is convex when $f$ is also convex). 
	For instance, $u(x)=x^2$ is convex and 
	$f(x)=\arctan(x)$ is increasing but $f\circ u(x)$ is not convex.  
	However, the weaker notion of  quasiconvexity keeps this property under monotonic
	transformations. Moreover, every monotonic transformation of a convex function
	is quasiconvex, although it is not true that every quasiconvex function
	can be written as a monotonic transformation of a convex function. 
	
	Concerning the composition of an $\alpha$-convex function with a smooth
	function $f:\mathbb{R} \to \mathbb{R}$ we have the following result.
	
	\begin{proposition} \label{prop.compos} For $0<\alpha<1$,
	let $u:\overline{\Omega} \to \mathbb{R}$ be an $\alpha$-convex function
	and $f:\mathbb{R} \to \mathbb{R}$ be such that
	$$
	f'(s) \geq 0, \qquad \mbox{and} \qquad \alpha f''(s) + (1-\alpha) [ (f'(s))^2 - f'(s) ] \geq 0.
	$$
	Then, the composition
	$$
	f \circ u :\overline{\Omega} \to \mathbb{R}
	$$
	is also $\alpha$-convex. 
	\end{proposition}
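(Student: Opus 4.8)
The plan is to reduce everything to the one-dimensional picture and to exploit the explicit smoothness of the comparison functions from Lemma~\ref{lemma-L-1d}, rather than attempting to compose viscosity subsolutions directly. Fix $x,y\in\overline{\Omega}$ and let $v$ be the solution to $L_\alpha v=0$ on $(0,1)$ with $v(0)=u(y)$ and $v(1)=u(x)$. Since $u$ is $\alpha$-convex, Definition~\ref{L-convex-defi} gives $u(tx+(1-t)y)\le v(t)$, and because $f$ is nondecreasing this yields $f(u(tx+(1-t)y))\le f(v(t))$ for all $t\in(0,1)$. Let $\tilde v$ be the solution to $L_\alpha\tilde v=0$ with $\tilde v(0)=f(u(y))$ and $\tilde v(1)=f(u(x))$. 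Then $f(v(0))=f(u(y))=\tilde v(0)$ and $f(v(1))=f(u(x))=\tilde v(1)$, so $f\circ v$ and $\tilde v$ agree at the endpoints; consequently it suffices to prove $f(v(t))\le\tilde v(t)$ on $(0,1)$, since combining this with the previous inequality gives exactly the defining inequality of $\alpha$-convexity for $f\circ u$ along the segment from $y$ to $x$.

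The heart of the argument is to show that $f\circ v$ is a classical subsolution, i.e.\ $L_\alpha(f\circ v)\ge 0$ on $(0,1)$. By Lemma~\ref{lemma-L-1d} the function $v$ is explicitly a logarithm, hence smooth, and $f$ is smooth by hypothesis, so $h:=f\circ v$ is smooth and the chain rule gives $h'=f'(v)v'$ and $h''=f''(v)(v')^2+f'(v)v''$. Substituting into $L_\alpha h=\alpha h''+(1-\alpha)(h')^2$ and using the equation $\alpha v''=-(1-\alpha)(v')^2$ satisfied by $v$, the term $\alpha f'(v)v''$ becomes $-(1-\alpha)f'(v)(v')^2$ and everything collapses to
\[
L_\alpha(f\circ v)=(v')^2\Big[\alpha f''(v)+(1-\alpha)\big((f'(v))^2-f'(v)\big)\Big].
\]
Here the hypothesis $f'\ge 0$ is exactly what lets us turn the identity $\alpha v''+(1-\alpha)(v')^2=0$ into the sign-controlled substitution above, while the second hypothesis $\alpha f''+(1-\alpha)[(f')^2-f']\ge 0$ makes the bracket nonnegative. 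Since $(v')^2\ge 0$ we conclude $L_\alpha(f\circ v)\ge 0$.

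Finally, I would invoke the comparison principle for $L_\alpha$ from Lemma~\ref{lem:1dcomp}: as $f\circ v$ is a (classical) subsolution, $\tilde v$ is a solution, and the two share their endpoint values, comparison yields $f(v(t))\le\tilde v(t)$ for $t\in(0,1)$, which completes the proof. The point that needs care is that comparison is being applied between a subsolution and a solution, rather than between two solutions as Lemma~\ref{lem:1dcomp} is literally phrased; this is standard for the explicit one-dimensional operator, since a smooth function with $L_\alpha\ge 0$ is $\alpha$-convex in one variable (Theorem~\ref{teo.1.intro}) and hence lies below the $L_\alpha$-solution carrying the same boundary data. I expect this monotone-composition step to be the only genuine obstacle: a direct viscosity argument would require composing a one-dimensional viscosity subsolution with $f$, which is delicate when $f$ is not strictly increasing and thus not locally invertible, so reducing to the smooth explicit solution $v$ and comparing $f\circ v$ with $\tilde v$ is precisely what keeps the argument clean.
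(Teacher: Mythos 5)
Your proposal is correct and follows essentially the same route as the paper: reduce to the segment, use $f'\geq 0$ to get $f(u(tx+(1-t)y))\leq f(v(t))$, compute $L_\alpha(f\circ v)$ via the chain rule and the identity $\alpha v''=-(1-\alpha)|v'|^2$ to see that $f\circ v$ is a classical subsolution under the stated hypothesis on $f$, and then compare with the $L_\alpha$-solution carrying the boundary data $f(u(y))$, $f(u(x))$. Your extra remark justifying the subsolution-versus-solution comparison is a welcome precision that the paper glosses over; the only tiny slip is attributing the substitution $\alpha f'(v)v''=-(1-\alpha)f'(v)|v'|^2$ to the hypothesis $f'\geq 0$ (it is an exact identity obtained by multiplying the ODE by $f'(v)$, with $f'\geq 0$ only needed for the earlier monotonicity step), which does not affect the validity of the argument.
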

	
	\begin{proof}
	Since $u$ is $\alpha$-convex we have that for every $x,y \in \overline{\Omega}$,
	$$
	u(tx+(1-t)y) \leq v(t),\qquad t \in (0,1),
	$$
	with $v$ the solution to 
	$$
		\left\{
	\begin{array}{l}
\displaystyle L_\alpha v (t) =0, \qquad t \in (0,1), \\
\displaystyle v(0) = u(y_0), \\ 
v(1) = u(x_0).
\end{array}
\right.
	$$
	Now, as $f' (s) \geq 0$, we get that
	$$
	f(u)(tx+(1-t)y) \leq  f(v)(t),\qquad t \in (0,1).
	$$
	Take 
	$$
	\tilde{v} (t) = f(v)(t).
	$$
	We have that
	$$
	(1-\alpha ) |\tilde{v}' (t) |^2 = (1-\alpha ) ( f' (v(t)) )^2 |v'(t)|^2
	$$
	and
	$$
	\alpha \tilde{v}'' (t) = \alpha f''(v(t) |v'(t)|^2 + \alpha f'(v(t)) v'' (t).
	$$
	Therefore, we get
	$$
	\begin{array}{l}
	\displaystyle \alpha \tilde{v}'' (t)  + (1-\alpha)  |\tilde{v}' (t) |^2
	\\[7pt]
	\displaystyle = \alpha f''(v(t)) |v'(t)|^2 + (1-\alpha) ( f' (v(t)) )^2 |v'(t)|^2 -
	(1-\alpha) f' (v(t))  |v'(t)|^2 \\[7pt]
	=|v'(t)|^2 \left[ 
	 \alpha f''(v(t)) + (1-\alpha) ( f' (v(t)) )^2  - (1-\alpha) f' (v(t))
	\right] \geq 0,
	\end{array}
	$$
	provided $f$ verifies
	$$
	\alpha f''(s) + (1-\alpha) [ (f'(s))^2 - f'(s) ] \geq 0.
	$$
	Then, $\tilde{v}$ is a subsolution to 
	\begin{equation} \label{sauna}
		\left\{
	\begin{array}{l}
\displaystyle L_\alpha \hat{v} (t) =0, \qquad t \in (0,1), \\
\displaystyle \hat{v}(0) = f(u)(y_0), \\ 
\hat{v}(1) = f(u)(x_0),
\end{array}
\right.
	\end{equation}
	and hence we get
	$$
	f(u)(tx+(1-t)y) \leq  f(v)(t) \leq \hat{v} (t)\qquad t \in (0,1),
	$$
with $\hat{v}$ the solution to \eqref{sauna}. This shows that $f\circ u$ is
$\alpha$-convex.
	\end{proof}
	
	We also have the desirable property that an $\alpha$-convex function attains its minimum
	inside ${\Omega}$ when the $\inf_\Omega u$ can be localized inside some compact subset
	(as for usual convex functions some coercivity is needed).
	
	\begin{proposition} \label{alc.mim} Let $u:\overline{\Omega} \to \mathbb{R}$
	be an $\alpha$-convex function such that there exists a compact $K \subset \Omega$ such that
	$$\inf_K u = \inf_{\Omega} u.$$ 
	Then $u$
	attains its minimum
	in ${\Omega}$.
	If, moreover, $u$ is strictly $\alpha$-convex
	(the inequality in Definition \ref{L-convex-defi} is strict), then 
	there is a unique minimum point in $\overline{\Omega}$ unless $u$ is constant.
	\end{proposition}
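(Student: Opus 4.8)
The plan is to obtain existence of the minimizer from the interior continuity of $u$ together with the compactness hypothesis, and then to prove uniqueness under strict $\alpha$-convexity by a one-dimensional comparison along the segment joining two putative minimizers.

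First I would establish existence. Invoking Theorem~\ref{teo.cont} (valid for $\alpha\neq0$), the function $u$ is Lipschitz continuous in $\Omega$, hence continuous on the compact set $K\subset\Omega$, so it attains its infimum there: there is $z_0\in K$ with $u(z_0)=\inf_K u$. By the hypothesis $\inf_K u=\inf_\Omega u=:m$, the point $z_0\in K\subset\Omega$ realizes the infimum, which shows that $u$ attains its minimum at an interior point. At this stage I would also record that $u\ge m$ on all of $\overline\Omega$, boundary included: if some $z_b\in\partial\Omega$ had $u(z_b)<m$, then $\alpha$-convexity along the segment from $z_0$ to $z_b$, combined with the fact that the one-dimensional solution $v$ with $v(0)=u(z_b)<m=v(1)$ is increasing (this is immediate from the explicit formula in Lemma~\ref{lemma-L-1d}), would force $u<m$ at interior points arbitrarily close to $z_0$, contradicting $m=\inf_\Omega u$. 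Thus $m=\min_{\overline\Omega}u$.

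Next I would prove uniqueness assuming $u$ is strictly $\alpha$-convex. Suppose, for contradiction, that $z_1\neq z_2$ are two points of $\overline\Omega$ with $u(z_1)=u(z_2)=m$. The crucial observation is that the solution $v$ of $L_\alpha v=0$ on $(0,1)$ with $v(0)=v(1)=m$ is the constant $v\equiv m$ (the case $a=b$ in Lemma~\ref{lemma-L-1d}). Strict $\alpha$-convexity then forces
\[
u\big(t z_1+(1-t)z_2\big)<v(t)=m,\qquad t\in(0,1).
\]
Since $\overline\Omega$ is convex, each point $t z_1+(1-t)z_2$ lies in $\overline\Omega$, where we have just shown $u\ge m$. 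This contradiction shows the minimizer is unique.

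The only delicate point is the degenerate alternative in the statement: a constant function is $\alpha$-convex but has every point as a minimizer. However, for a constant $u$ the associated one-dimensional comparison function is itself constant and equality holds in Definition~\ref{L-convex-defi}, so a constant function is never strictly $\alpha$-convex; hence under the strictness hypothesis the exceptional case does not arise and uniqueness always holds. I expect the main (albeit mild) obstacle to be the bookkeeping that keeps the contradiction valid when the two minimizers sit on $\partial\Omega$ in a non-strictly-convex domain—this is settled exactly by the preliminary identity $m=\min_{\overline\Omega}u$ and by the convexity of $\overline\Omega$, which keeps the connecting segment inside the region where $u\ge m$.
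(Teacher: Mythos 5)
Your proof is correct and follows essentially the same route as the paper: continuity on the compact set $K$ gives existence, and for uniqueness the constant solution $v\equiv m$ of the one-dimensional problem with equal endpoint data combined with the strict inequality yields the contradiction. Your additional bookkeeping (showing $u\ge m$ on all of $\overline{\Omega}$ so that boundary minimizers are also covered, and noting that a constant function is never strictly $\alpha$-convex) is a harmless refinement of the paper's argument, which only treats interior minimizers explicitly.
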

	\begin{proof}
	Since there exists a compact $K \subset \Omega$ such that
	$$\inf_K u = \inf_{\Omega} u,$$ 
 and $u$ is continuous 
 the existence of a minimum point in $K \subset \Omega$ follows.
	
	Now, assume that $u$ is strictly $\alpha$-convex
	and that there are two different points $x,y \in {\Omega}$ with $u(x) = u(y)=A$ that attain the minimum value $A$.
	Then, since $u$ is strictly $\alpha$-convex we have
	\begin{equation} \label{uuu}
	u(tx+(1-t)y) < v(t) \equiv A.
	\end{equation}
	Here we used that the solution to 
	$$
	\left\{
	\begin{array}{l}
\displaystyle L_\alpha v (t) =0, \qquad t \in (0,1), \\
\displaystyle v(0) = A, \\ v(1) = A,
\end{array}
\right.
	$$
	 is just the constant
	 $$
	 v (t) \equiv A.
	 $$
	Now (\ref{uuu}) contradicts the fact that $A$ was the minimum.
	\end{proof}
	
	\begin{remark} The condition that there exists a compact $K \subset \Omega$ such that
	$$\inf_K u = \inf_{\Omega} u,$$ 
	is needed since the function 
	$$
	u(t) = \left\{
	\begin{array}{ll}
	1 \qquad & t=0, \\[7pt]
	t \qquad & t \in (0,1],
	\end{array}
	\right.
	$$
	is convex in $[0,1]$ and also $\alpha$-convex for all $\alpha$ but it does not attain 
	a minimum in $[0,1]$.
	\end{remark}

	\section{The $\alpha$-convex envelope} \label{sect-envelope}
	
	In this section we deal with the $\alpha$-convex envelope of a continuous boundary datum $g$ in a strictly convex domain $\Omega \subset \mathbb{R}^N$ (with $N\geq 2$) as in Definition \ref{def:convex-env}:
\begin{equation} \label{convex-envelope-X.99}
	u^* (z) = \sup \Big\{w(z) : w \mbox{ is $\alpha$-convex and verifies } w|_{\partial \Omega} \leq g \Big\}.
		\end{equation}
		 Recall that $\Omega$ is strictly convex if $xt+y(1-t)\in \Omega$ for every $x,y\in \overline \Omega$ and $t\in (0,1)$.
We begin by proving that these functions are continuous (up to the boundary).
Then we obtain a comparison principle for $\mathcal{L}_\alpha$ (with viscosity solutions understood in the classical sense, testing with $N$-dimensional test functions).
Finally, we characterize the $\alpha$-convex envelope as the unique solution to the equation $\mathcal{L}_\alpha u=0$ inside $\Omega$ with boundary datum $g$ on $\partial \Omega$.

\subsection{Proof of Theorem~\ref{thm:cont}: equicontinuity of the $\alpha$-convex envelope.}

We begin with two preliminary lemmas.

\begin{lemma}
\label{starstar}
Let $V\subset\Omega$ be strictly convex, and $g:\partial \Omega \to \mathbb{R}$ a continuous function.
Let $u^*$ be the $\alpha$-convex envelope of $g$, then  the $\alpha$-convex envelope of $u^*|_{\partial V}$ in $V$ is just $u^*|_{V}$.
\end{lemma}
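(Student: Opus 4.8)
The plan is to write $w^*$ for the $\alpha$-convex envelope of $u^*|_{\partial V}$ in $V$ and to prove the two inequalities $w^*\ge u^*$ and $w^*\le u^*$ on $V$. The lower bound is immediate: $u^*$ is a supremum of $\alpha$-convex functions, hence $\alpha$-convex in $\overline\Omega$ by Lemma~\ref{lema.sup}; since $V$ is convex, any segment joining two points of $\overline V$ stays in $\overline V$, so the restriction $u^*|_{\overline V}$ is again $\alpha$-convex in $\overline V$, with boundary trace $u^*|_{\partial V}$. It is therefore an admissible competitor in the supremum defining $w^*$, whence $w^*\ge u^*$ on $\overline V$, in particular on $V$.

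For the reverse inequality, which is the heart of the lemma, I would glue $w^*$ and $u^*$ and invoke the maximality of $u^*$. Set
\[
U(z)=\begin{cases}\max\{w^*(z),u^*(z)\}, & z\in\overline V,\\ u^*(z), & z\in\overline\Omega\setminus V.\end{cases}
\]
Since every competitor for $w^*$ is $\le u^*|_{\partial V}$ on $\partial V$, we have $w^*\le u^*$ there, so the two branches agree on $\partial V$ and $U\ge u^*$ throughout $\overline\Omega$. The key step is that $U$ is $\alpha$-convex. Fix $x,y\in\overline\Omega$, put $z=tx+(1-t)y$, and let $v$ solve $L_\alpha v=0$ on the segment $[y,x]$ with endpoint values $U(y),U(x)$; the goal is $U(z)\le v(z)$. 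If $U(z)=u^*(z)$ -- which is automatic when $z\notin V$ -- then the $\alpha$-convexity of $u^*$, together with $U(x)\ge u^*(x)$, $U(y)\ge u^*(y)$ and the monotone dependence of one-dimensional solutions on their boundary data (Lemma~\ref{lem:1dcomp}), gives $U(z)=u^*(z)\le v(z)$. In the remaining case $z\in V$ and $U(z)=w^*(z)$: using the strict convexity of $V$, the segment meets $\overline V$ in a single subsegment $[p,q]\ni z$, whose endpoints either lie on $\partial V$ or coincide with $x$ or $y$; in all cases $w^*(p)\le v(p)$ and $w^*(q)\le v(q)$ (from $w^*\le u^*$ on $\partial V$ and the case already treated, or from $w^*\le U$ at the endpoints). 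Since $w^*$ is $\alpha$-convex on $\overline V$ and the restriction of $v$ to $[p,q]$ solves the same equation, Lemma~\ref{lem:1dcomp} once more yields $w^*(z)\le v(z)$.

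To conclude, note that $V\subset\Omega$ forces $\partial\Omega\cap V=\emptyset$, while on $\partial V$ we have $U=u^*$; hence $U|_{\partial\Omega}=u^*|_{\partial\Omega}\le g$, so $U$ is admissible in the definition of $u^*$ and maximality yields $U\le u^*$. As $U\ge w^*$ on $V$, this gives $w^*\le u^*$ on $V$, and combined with the lower bound we obtain $w^*=u^*$ on $V$, as claimed. I expect the only genuinely delicate point to be the $\alpha$-convexity of the glued function $U$ across the interface $\partial V$ in the crossing-segment case; it is exactly here that the strict convexity of $V$ (ensuring the segment's trace on $\overline V$ is one subsegment) and the one-dimensional comparison principle are indispensable.
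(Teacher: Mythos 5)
Your proposal is correct and follows essentially the same route as the paper: both prove $w^*\ge u^*$ by observing that $u^*|_{\overline V}$ is an admissible competitor, and both obtain the reverse inequality by gluing $w^*$ on $V$ with $u^*$ outside $V$, verifying $\alpha$-convexity of the glued function segment by segment via the one-dimensional comparison principle, and invoking the maximality of $u^*$. Your write-up is if anything slightly more explicit than the paper's about the interface case (the subsegment $[p,q]$ and the ordering of its endpoint values), which is the genuinely delicate point you correctly identified.
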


\begin{proof}
We consider 
\[
	u^{**} (z) = \sup \Big\{w(z) : w:V\to\R \mbox{ is $\alpha-$convex and verifies } w|_{\partial V} \leq u^*|_{\partial V} \Big\}.
\]
Since $u^*|_{V}$ is $\alpha$-convex we have $u^{**}\geq u^*$ in $V$.

We consider $u$ defined in $\Omega$ given by
\[
u(x)=
\begin{cases}
u^{**}(x) & \text{ if } x\in V\\
u^{*}(x) & \text{ if } x\not\in V.
\end{cases}
\]
We claim that this function is $\alpha$-convex.
Then $u\leq u^*$ and we conclude $u^{**}\leq u^*$ in $V$ as desired.

It remains to prove the claim.
Let $x,y\in\Omega$ and $t \in (0,1)$, we have to show that	$u(tx+(1-t)y) \leq v(t)$ where $v$ is the solution to 
	$$
	\left\{
	\begin{array}{l}
\displaystyle L v (t) =0, \qquad t \in (0,1), \\
\displaystyle v(0) = u(y), \\ 
v(1) = u(x).
\end{array}
\right.
	$$
Since $u^{**}\ge u^{*}$, we always have 
\begin{align*}
v(0)\ge u^{*}(y),\quad v(1)\ge u^{*}(x),
\end{align*}
and thus $v(t)\ge u^{*}(tx+(1-t)y)$ always.
Since on $\partial V$ it holds that $u^{*}=u^{**}$, we deduce that
\begin{align*}
v(t)\ge u^{**}(tx+(1-t)y)
\end{align*}
on the part that is contained in $V$. Thus by combining the facts, it holds that $v(t)\ge u(tx+(1-t)y)$ i.e.\ the claim follows.
\end{proof}

\begin{lemma}
\label{unifcont}
Let $\{u_\alpha\}_{\alpha\in A}$ be a family of functions defined in $V$.
Assume that the family is equicontinuous in a compact set $K$.
Then, given $\eps>0$, there exists $\delta>0$ such that for every $x,y\in V$ with
$|x-y|<\delta$ and $\dist(x,K)< \delta$ and $\dist(y,K)<\delta$ we have $|u_\alpha(x)-u_\alpha(y)|<\eps$ for every $\alpha\in A$.
\end{lemma}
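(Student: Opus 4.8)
The plan is to treat this as a standard compactness argument that upgrades the pointwise equicontinuity on the compact set $K$ to a uniform modulus valid on a thin neighborhood of $K$ inside $V$. I read equicontinuity of $\{u_\alpha\}_{\alpha\in A}$ in $K$ in the usual way: at each $x_0\in K$, and for each prescribed tolerance, there is a radius $\delta_{x_0}>0$ so that $|u_\alpha(y)-u_\alpha(x_0)|$ stays below the tolerance for all $y\in V$ with $|y-x_0|<\delta_{x_0}$ and all $\alpha\in A$. The essential point is that the perturbed point $y$ is allowed to lie anywhere in $V$, not just in $K$; this is exactly what will let us reach points off $K$.

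First, given $\eps>0$, for each $x_0\in K$ I would invoke equicontinuity at $x_0$ with tolerance $\eps/2$ to obtain $\delta_{x_0}>0$ such that $|u_\alpha(y)-u_\alpha(x_0)|<\eps/2$ whenever $y\in V$ and $|y-x_0|<\delta_{x_0}$, uniformly in $\alpha$. The balls $\{B(x_0,\delta_{x_0}/2)\}_{x_0\in K}$ cover $K$, so by compactness I extract a finite subcover centred at $x_1,\dots,x_n\in K$ with associated radii $\delta_i:=\delta_{x_i}$, and then set $\delta:=\tfrac14\min_{1\le i\le n}\delta_i$.

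Next I would verify the conclusion for this $\delta$. Take $x,y\in V$ with $|x-y|<\delta$, $\dist(x,K)<\delta$ and $\dist(y,K)<\delta$. Choose $p\in K$ with $|x-p|<\delta$; since the balls $B(x_i,\delta_i/2)$ cover $K$, there is an index $i$ with $|p-x_i|<\delta_i/2$. The triangle inequality then gives $|x-x_i|<\delta+\delta_i/2<\delta_i$ and $|y-x_i|\le|y-x|+|x-x_i|<2\delta+\delta_i/2\le\delta_i$, by the choice of $\delta$ (recall $\delta_i\ge 4\delta$). Thus $x$ and $y$ both lie in $V$ within distance $\delta_i$ of the single centre $x_i\in K$, so the equicontinuity estimate at $x_i$ applies to each, yielding $|u_\alpha(x)-u_\alpha(x_i)|<\eps/2$ and $|u_\alpha(y)-u_\alpha(x_i)|<\eps/2$ for every $\alpha$. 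A final triangle inequality gives $|u_\alpha(x)-u_\alpha(y)|<\eps$ for all $\alpha\in A$, as required.

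The only genuine content lies in the constant bookkeeping of that last step: one must arrange that $x$ and $y$ can be compared through a \emph{common} centre $x_i\in K$, and this is precisely where the three hypotheses $|x-y|<\delta$, $\dist(x,K)<\delta$ and $\dist(y,K)<\delta$ are used together, rather than merely comparing each of $x,y$ to its own nearest point of $K$. I expect no serious obstacle beyond choosing $\delta$ small enough to absorb the two triangle-inequality detours, for which the factor $\tfrac14\min_i\delta_i$ suffices.
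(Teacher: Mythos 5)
Your proof is correct and follows essentially the same route as the paper's: pointwise equicontinuity at each point of $K$ gives radii, compactness gives a finite subcover with shrunken balls, and a suitably small $\delta$ (you use $\tfrac14\min_i\delta_i$ where the paper uses $\tfrac13$) lets both $x$ and $y$ be compared to a common centre $x_i\in K$ via the triangle inequality. The constant bookkeeping checks out, so nothing further is needed.
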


\begin{proof}
Since $u_\alpha$ is equicontinuous, for every $x\in K$, there exists $\delta_x>0$ such that $|u_\alpha(x)-u_\alpha(y)|<\eps/2$ for every $\alpha\in A$ and $y\in V$ with $|x-y|<\delta_x$.

We consider the following cover of $K$,
\[
K\subset \bigcup_{x\in K} B_{\tilde\delta_x}(x)
\]
where $\tilde \delta_x=\delta_x/3$.
Since $K$ is compact there exists a finite number of points $\{x_i\}_{i=1}^N$ such that 
\begin{equation}
\label{cover}
K\subset \bigcup_{i=1}^N B_{\tilde \delta_{x_i}}(x_i).
\end{equation}
We take $\delta=\min_{i=1,\dots,N}\tilde \delta_i$.

Let $x,y\in V$ with $\dist(x,K),\dist(y,K)<\delta$ and $|x-y|<\delta$, we want to prove that $|u_\alpha(x)-u_\alpha(y)|<\eps$.
Since $\dist(x,K)<\delta$ there exists $\tilde x\in K$ such that $|x-\tilde x|<\delta$.
Then, since we have \eqref{cover}, there exists $i\in\{1,\dots,N\}$ such that $|\tilde x- x_i|<\tilde \delta_{x_i}$.
We get 
\[
|x-x_i|\leq |x-\tilde x|+|\tilde x-x_i|\leq \delta+\tilde \delta_{x_i}\leq 2\tilde \delta_{x_i}\leq \delta_{x_i},
\]
therefore $|u_\alpha(x)-u_\alpha(x_i)|<\eps/2$ for every $\alpha\in A$.

Since $|x-y|<\delta$, we have
\[
|y-x_i|\leq |y-x|+|x-\tilde x|+|\tilde x-x_i|\leq \delta+\delta+\tilde \delta_{x_i}\leq 3\tilde \delta_{x_i}\leq \delta_{x_i},
\]
so $|u_\alpha(y)-u_\alpha(x_i)|<\eps/2$ and we conclude that 
\[
|u_\alpha(x)-u_\alpha(y)|\leq |u_\alpha(x)-u_\alpha(x_i)|+|u_\alpha(x_i)-u_\alpha(y)|<\eps.
\]
for every $\alpha\in A$
\end{proof}

\begin{proof}[Proof of Theorem \ref{thm:cont}]
{
The first step is to obtain the equicontinuity on the boundary of the domain.
We begin by observing that since $\Omega$ is strictly convex, given $y \in \partial\Omega$ there exists a unitary vector $w\in\R^N$, $|w|=1$, such that for every $\delta >0$ there exists $\theta >0$ such that
\[
\{x\in\Omega : \langle w,x-y\rangle <\theta\} \subset B_\delta(y)
\]
see Lemma 6 in \cite{BCMR}.
Then given a continuous boundary datum $g$, a boundary point $y \in \partial\Omega$, and $\eps>0$, we can consider $\delta>0$ arising from the continuity of $g$ at $y$ so that 
\[
\eps+\frac{\langle w,x-y\rangle}{\theta}\max g
\quad \text{and} \quad
-\eps+\frac{\langle w,x-y\rangle}{\theta}\min g
\]
are barriers.
Since these functions are affine they are both convex and quasiconvex, we get that both $u_0^*$ and $u_1^*$ are continuous at the boundary.
Since $\partial \Omega$ is compact both $u_0^*$ and $u_1^*$ are equicontinuous on $\partial \Omega$.
Finally, since $u_1^*\leq u_\alpha^*\leq u_0^*$ we conclude that the family $\{u_\alpha^*\}_{\alpha\in[0,1]}$ is equicontinuous on the boundary.
}

Now, let $\eps>0$, since $\partial\Omega$ is compact and $\{u_\alpha^*\}_{\alpha\in[0,1]}$ is equicontinuous at the points of that set, by Lemma~\ref{unifcont}, we have that there exists $\delta>0$ so that
\begin{equation}
\label{estimate}
|u^*_\alpha (x) - u^*_\alpha(y)| < \eps
\end{equation}
whenever $\abs{x-y}<\delta$ and $\dist(x,\partial\Omega),\dist(y,\partial\Omega)<\delta$ for every $\alpha\in[0,1]$.

Now we show that in fact 
\begin{equation}
|u^*_\alpha (x) - u^*_\alpha(y)| < \eps
\end{equation}
whenever $\abs{x-y}<\delta/2$ for every $\alpha\in[0,1]$.
We consider a slightly smaller domain
\[
\tilde\Om =\{z\in \Om\,:\,\dist(z,\partial \Om)> \delta/2\}
\]
and
\[
\Gamma=\overline{\Omega}\setminus\tilde\Omega =\{z\in \overline\Om\,:\,\dist(z,\partial \Om)\leq \delta/2\}.
\]
Suppose that $x,y \in \Om$ with $\abs{x-y}<\delta/2$.
If $x$ or $y$ are in $\Gamma$ then we have $\dist(x,\partial\Omega) < \delta$ and $\dist(y,\partial\Omega)<\delta$ and hence we can employ \eqref{estimate}.
Finally, let $x,y\in\tilde\Om $ such that $\abs{x-y}<\delta/2$, we want to prove that $u^*_\alpha(y)\leq u^*_\alpha(x)+\eps$.
We consider 
\[
u(z)=u^*_\alpha(z-x+y)-\eps,
\]
it is defined in $\tilde\Omega$ and it is $\alpha$-convex.
Since $u^*_\alpha$ is the $\alpha$-convex envelope in $\tilde\Omega$ of it self restricted to $\partial\tilde\Omega$ (see Lemma~\ref{starstar}, observe that $\tilde\Omega$ is strictly convex), and $u\leq u^*_\alpha$ on $\partial\tilde\Omega$, we get $u\leq u^*_\alpha$ in $\tilde\Omega$.
In particular, evaluating at $x$, we get $u^*_\alpha(y)-\eps=u(z)\leq u^*_\alpha(x)$ as we wanted to prove.
\end{proof}

	\subsection{Comparison principle for $\mathcal{L}_\alpha$}
	
	Here our goal is to prove that the equation 
	$$
	\displaystyle 
	 \mathcal{L}_\alpha  u (z) =  \inf_{|v|=1} 
		\{\alpha \langle D^2 u(z) v, v \rangle + (1-\alpha)  |\langle D u(z), v \rangle |^2\} =0.
		$$
		has a comparison principle between classical sub and supersolutions
		(testing with $N$-dimensional functions). This subsection does not assume convexity of the domain.
		
		Let us now state a second definition of viscosity sub/supersolution to the equation
(notice that here we test with $N$-dimensional test functions).
	\begin{definition}[Viscosity solutions using $N$-dimensional test functions] \label{defi-visc.N} \
		An upper semicontinuous function $u\colon\Om\to\mathbb{R}$  is a 
		\textit{viscosity subsolution}  of \eqref{eq:Lu}
				if  for any $z\in \Omega,$ 
				 any test function $\phi\in C^{2}(\Om)$
		such that 
		$$
		\phi(z)= u(z)\text{ and }\phi(y)\ge u(y) \text{ in } \Omega,
		$$
		we have
		\[
		\mathcal{L}_{\alpha} \phi (z)\geq 0.
		\]

		A lower semicontinuous function $u\colon\Om \to\mathbb{R}$  is a 
		\textit{viscosity supersolution}  of 
		\eqref{eq:Lu} 
		if for any 
		$z\in \Omega,$ 
		any test function $\phi\in C^{2}(\Om)$ 
		such that $$\phi(z)= u(z) \text{ and }\phi(y)\le u(y) \text{ in } \Omega,$$ 
we have		
		\[
			\mathcal{L}_{\alpha} \phi (z)\leq 0 .
		\]

		Finally, we say that $u$ is a viscosity solution of \eqref{eq:Lu} when it is both a 
		viscosity subsolution and a viscosity supersolution of	\eqref{eq:Lu}.
	\end{definition}

		First, we show that we can perturb a subsolution in order to make it a 
		strict subsolution. This is one of the key steps in the proof of the comparison principle. 
		
		\begin{lemma} \label{lema.perturbacion.estrcto}
		Let $\alpha\neq 0$ and $\underline{u}$ be a subsolution to $ \mathcal{L}_\alpha  u (z)= 0$, 
		that is, it holds that
	$$
	\displaystyle 
	 \mathcal{L}_\alpha \underline{u} (z) =  \inf_{ |v|=1} \Big\{
		\alpha \langle D^2 \underline{u}(z) v, v \rangle + (1-\alpha)  |\langle D \underline{u}(z), v \rangle |^2 \Big\} \geq 0,		
		$$
		in the viscosity sense (testing with $N$-dimensional test functions). 
		Then, for $k>1$, $\delta>0$ with $\delta << k-1$ the function
		$$
		u_{k,\delta} (z) = k \underline{u} (z) + \delta |z|^2
		$$
		is a strict subsolution, that is, there exists $c>0$ such that
		$$
	\displaystyle 
	 \mathcal{L}_\alpha  u_{k,\delta}  (z) =  \inf_{ |v|=1} \Big\{
		\alpha \langle D^2 u_{k,\delta}  (z) v, v \rangle + (1-\alpha) |\langle D u_{k,\delta}  (z), v \rangle |^2 \Big\}\geq c > 0,	
		$$
		in the viscosity sense (testing with $N$-dimensional test functions). 
		\end{lemma}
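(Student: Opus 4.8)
The plan is to verify the strict subsolution inequality directly against an arbitrary $N$-dimensional test function, reducing everything to the ordinary (non-strict) subsolution property of $\underline{u}$. Suppose $\phi\in C^2(\Omega)$ touches $u_{k,\delta}$ from above at a point $z_0\in\Omega$, i.e.\ $\phi(z_0)=u_{k,\delta}(z_0)$ and $\phi\ge u_{k,\delta}$. The first step is to transfer this to $\underline{u}$: the function $\psi(z):=\frac1k\big(\phi(z)-\delta|z|^2\big)$ is $C^2$ and touches $\underline{u}$ from above at $z_0$. Since $\underline{u}$ is a subsolution, $\mathcal{L}_\alpha\psi(z_0)\ge 0$; as a nonnegative infimum forces every term to be nonnegative, we obtain the pointwise (in direction) inequality
$$
\alpha\langle D^2\psi(z_0)v,v\rangle+(1-\alpha)\langle D\psi(z_0),v\rangle^2\ge 0
$$
for every unit vector $v$. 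This is the only information about $\underline{u}$ that the argument uses.

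Next I would expand the operator applied to $\phi=k\psi+\delta|z|^2$, using $D\phi=kD\psi+2\delta z$ and $D^2\phi=kD^2\psi+2\delta I$. Writing $a=\langle D\psi(z_0),v\rangle$ and $b=2\delta\langle z_0,v\rangle$, a direct computation gives, for each unit $v$,
$$
\alpha\langle D^2\phi\,v,v\rangle+(1-\alpha)\langle D\phi,v\rangle^2 = k\alpha\langle D^2\psi\,v,v\rangle+2\alpha\delta+(1-\alpha)(ka+b)^2 .
$$
Invoking the pointwise inequality in the form $k\alpha\langle D^2\psi\,v,v\rangle\ge -k(1-\alpha)a^2$, this is bounded below by $2\alpha\delta+(1-\alpha)\big[k(k-1)a^2+2kab+b^2\big]$.

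The heart of the argument is then to minimize the quadratic $q(a)=k(k-1)a^2+2kab+b^2$ over $a\in\R$; since $k>1$ its leading coefficient is positive and completing the square gives the minimum value $-b^2/(k-1)$. Hence the whole expression is at least $2\alpha\delta-(1-\alpha)b^2/(k-1)$. Bounding $b^2=4\delta^2\langle z_0,v\rangle^2\le 4\delta^2 R^2$, where $R:=\sup_{z\in\Omega}|z|<\infty$ by boundedness of $\Omega$, and factoring out $2\delta$ yields the uniform lower bound
$$
\mathcal{L}_\alpha\phi(z_0)\ge 2\delta\Big(\alpha-\frac{2(1-\alpha)\delta R^2}{k-1}\Big).
$$
Finally, the hypothesis $\delta\ll k-1$ (precisely $2(1-\alpha)\delta R^2/(k-1)\le\alpha/2$) makes the bracket at least $\alpha/2$, so that $\mathcal{L}_\alpha\phi(z_0)\ge \alpha\delta=:c>0$, a constant independent of the test function and the direction.

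The main obstacle is the algebra in the last two steps. The cross term $2kab$, produced by the gradient of the perturbation $\delta|z|^2$, could a priori overwhelm the favorable terms; it is precisely the completion-of-square estimate $q(a)\ge -b^2/(k-1)$, whose error is controlled by the smallness of $\delta$ relative to $k-1$, that keeps the expression strictly positive. Everything else—the transfer of the test function to $\underline{u}$ and the expansion of the derivatives—is routine.
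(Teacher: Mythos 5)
Your proof is correct and follows essentially the same route as the paper: expand the operator on $k\underline{u}+\delta|z|^2$, absorb the Hessian term using the subsolution inequality, and control the cross term by completing the square, which leaves exactly the error $-(1-\alpha)4\delta^2\langle z_0,v\rangle^2/(k-1)$ that the smallness of $\delta$ relative to $k-1$ compensates. The only (welcome) difference is that you make explicit the transfer of the $N$-dimensional test function via $\psi=\tfrac1k(\phi-\delta|z|^2)$, a step the paper only indicates parenthetically.
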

		
		\begin{proof}
		We have that 
		$$
		D u_{k,\delta}  (z) = k D \underline{u} (z) + 2\delta z
		$$
		and
		$$
		D^2 u_{k,\delta}  (z) = k D^2 \underline{u} (z) + 2\delta I.
		$$
		Therefore, we have (the following computations can be justified in the viscosity sense)
		$$
		\begin{array}{l}
\displaystyle
		 \mathcal{L}_\alpha  u_{k,\delta}  (z) =  \inf_{|v|=1} \{
		\alpha \langle D^2 u_{k,\delta}  (z) v, v \rangle + (1- \alpha)  |\langle D u_{k,\delta}  (z), v \rangle |^2\} \\[7pt]
		\displaystyle
		= \inf_{ |v|=1} \{
		\alpha \langle  ( k D^2 \underline{u} (z) + 2\delta I ) v, v \rangle 
		+ (1-\alpha)  |\langle k D \underline{u} (z) + 2\delta z, v \rangle |^2\} \\[7pt]
		\displaystyle =
		\inf_{ |v|=1} 
		\{\alpha k \langle D^2 \underline{u} (z)  v, v \rangle 
		+ (1-\alpha)  k^2 |\langle D \underline{u} (z),v \rangle |^2  \\[7pt]
		\displaystyle \qquad 
		+ (1-\alpha) 4 k \delta \langle D \underline{u} (z),v \rangle \langle  z, v \rangle
		+ (1-\alpha) 4 \delta^2 |\langle  z, v \rangle |^2 
		+ 2\delta \alpha\} \\[7pt]
		\displaystyle 
		\geq 
				\inf_{ |v|=1} 
		\{(1-\alpha)  (k^2-k) |\langle D \underline{u} (z),v \rangle |^2  \\[7pt]
		\displaystyle \qquad 
		+ (1-\alpha) 4 k \delta \langle D \underline{u} (z),v \rangle \langle  z, v \rangle
		+ (1-\alpha) 4 \delta^2 |\langle  z, v \rangle |^2 
		+ 2\delta \alpha\} \\[7pt]
		\displaystyle 
		=
		\inf_{ |v|=1} \{
		(1-\alpha)   \Big| \sqrt{k^2-k} \langle D \underline{u} (z),v \rangle
		+ 2 \delta \frac{k}{ \sqrt{k^2-k}} \langle  z, v \rangle \Big|^2  
		\\[7pt] \displaystyle \qquad + (1-\alpha) 4 \delta^2 \Big(1- \frac{k}{ k-1 } \Big) |\langle  z, v \rangle |^2 
		+ 2\delta \alpha\} \\[7pt]
		\geq \delta \alpha >0
		\end{array}
		$$
		provided that 
		\[
		(1-\alpha) 4 \delta^2 \Big(1- \frac{k}{ k-1 } \Big) |\langle  z, v \rangle |^2 
		+ \delta \alpha\geq 0.
		\]
		This is immediate for $\alpha=1$, while for $\alpha\neq 1$ we take $\delta$ such that
				$$
		\frac{1}{\displaystyle 4  \max_{z \in \overline{\Omega}} 
		|\langle  z, v \rangle |^2 } (k-1) \Big(\frac{\alpha}{1-\alpha} \Big) \geq \delta.
		$$
		This ends the proof.
		\end{proof}
		
		With this result at hand we can prove a comparison lemma. 
		
		\begin{lemma} \label{teo.compar}
		Let $\alpha\neq 0$, $\overline{u}:\overline{\Omega} \to \mathbb{R}$ and $\underline{u}
		:\overline{\Omega} \to \mathbb{R}$ be a supersolution and a  subsolution to 
		$$
	\displaystyle 
	 \mathcal{L}_\alpha  u (z) =  \inf_{ |v|=1} \Big\{
		\alpha \langle D^2 u(z) v, v \rangle + (1-\alpha)  |\langle D u(z), v \rangle |^2\Big\} =0.		
		$$
		Then, if $\overline{u} \geq \underline{u}$ on $\partial \Omega$ we get that
		$$
		\overline{u} \geq \underline{u}, \qquad \mbox{ in } \Omega.
		$$
		\end{lemma}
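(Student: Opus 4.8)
The plan is to run the standard viscosity comparison argument---doubling of variables together with the Crandall--Ishii lemma---using the strict subsolution produced in Lemma~\ref{lema.perturbacion.estrcto} to close the contradiction. The key structural observation is that $\mathcal{L}_\alpha$ is degenerate elliptic: since $\alpha\neq 0$ the second-order part $\alpha\langle Xv,v\rangle$ is monotone in the matrix variable, so $\mathcal{L}_\alpha(p,X)\le \mathcal{L}_\alpha(p,Y)$ whenever $X\le Y$, while the first-order term $(1-\alpha)|\langle p,v\rangle|^2$ depends only on $p$ and is therefore identical on both sides.

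First I would fix $k>1$ and $\delta>0$ small and set $w=u_{k,\delta}=k\underline{u}+\delta|z|^2$, which by Lemma~\ref{lema.perturbacion.estrcto} is a \emph{strict} subsolution, $\mathcal{L}_\alpha w\ge c>0$. It suffices to show that $w-\overline{u}$ attains its maximum over $\overline{\Omega}$ on $\partial\Omega$; then $w-\overline{u}\le\max_{\partial\Omega}(w-\overline{u})$ everywhere, and letting $k\to1$, $\delta\to0$ (so that $w\to\underline{u}$ uniformly, $\overline{\Omega}$ being bounded) together with $\underline{u}\le\overline{u}$ on $\partial\Omega$ yields $\underline{u}\le\overline{u}$ in $\Omega$.

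Arguing by contradiction, I would suppose the maximum of $w-\overline{u}$ is attained at an interior point and strictly exceeds the boundary maximum, and then double variables by maximizing $\Phi(x,y)=w(x)-\overline{u}(y)-\frac{1}{2\eps}|x-y|^2$ over $\overline{\Omega}\times\overline{\Omega}$ at some $(x_\eps,y_\eps)$. The usual penalization estimates force $x_\eps,y_\eps$ to converge to an interior maximizer, so both lie in $\Omega$ for $\eps$ small and $\frac{1}{\eps}|x_\eps-y_\eps|^2\to0$. The Crandall--Ishii lemma (see \cite{CIL}) then provides symmetric matrices $X\le Y$ and the common vector $p_\eps=\frac{x_\eps-y_\eps}{\eps}$ such that $(p_\eps,X)$ lies in the closure of the second-order superjet of $w$ at $x_\eps$ and $(p_\eps,Y)$ in the closure of the second-order subjet of $\overline{u}$ at $y_\eps$.

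Evaluating the strict subsolution and supersolution inequalities then gives
\[
c\le \inf_{|v|=1}\Big\{\alpha\langle X v,v\rangle+(1-\alpha)|\langle p_\eps,v\rangle|^2\Big\},
\qquad
\inf_{|v|=1}\Big\{\alpha\langle Y v,v\rangle+(1-\alpha)|\langle p_\eps,v\rangle|^2\Big\}\le 0.
\]
Since $X\le Y$ and $\alpha>0$, for every unit $v$ we have $\alpha\langle Xv,v\rangle+(1-\alpha)|\langle p_\eps,v\rangle|^2\le \alpha\langle Yv,v\rangle+(1-\alpha)|\langle p_\eps,v\rangle|^2$, and taking infima preserves the inequality, so the two displays combine to $c\le 0$, contradicting $c>0$. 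Because the first-order term depends only on the \emph{common} gradient $p_\eps$, it cancels exactly and no bound on its size is needed. The main obstacle is the correct invocation of the Crandall--Ishii lemma and the verification that the maximizers are interior (so that the sub/super tests legitimately apply); once $X\le Y$ is in hand, the degenerate ellipticity of $\mathcal{L}_\alpha$ makes the contradiction immediate.
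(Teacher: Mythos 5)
Your proof is correct, and it reaches the same contradiction as the paper's but by a technically different route. Both arguments begin with the same reduction: replace $\underline{u}$ by the strict subsolution $k\underline{u}+\delta|z|^2$ from Lemma~\ref{lema.perturbacion.estrcto} and suppose the difference with $\overline{u}$ has an interior maximum exceeding the boundary maximum. From there the paper regularizes by sup-/inf-convolution and invokes a Jensen-type lemma (Lemma~5.2 of \cite{BGJ13}) producing jets $(p_k,A_k)$ and $(q_k,B_k)$ at nearby points with $A_k-B_k\le-\tfrac{K}{k}I$ but with only $|p_k-q_k|\to0$; consequently the first-order terms do \emph{not} cancel, and the paper must estimate $|\langle p_k,v_k\rangle|^2-|\langle q_k,v_k\rangle|^2$ and pass to the limit to kill this difference (which tacitly uses a uniform gradient bound coming from the Lipschitz constants of the convolutions). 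You instead double variables and apply the Crandall--Ishii theorem on sums of \cite{CIL} directly, which yields a \emph{common} gradient $p_\eps=(x_\eps-y_\eps)/\eps$ in both jets; the first-order term then cancels identically, and the contradiction $c\le 0$ follows from $X\le Y$ and $\alpha>0$ alone, with no limiting estimate on the gradient terms. Your version is arguably cleaner for this particular operator. The only points worth making explicit are (i) that the sub-/supersolution inequalities extend to the closures $\overline{J}^{2,\pm}$ because $\mathcal{L}_\alpha$ is continuous in $(p,X)$, and (ii) that letting $k\to1$ gives uniform convergence $k\underline{u}\to\underline{u}$ only because $\underline{u}$ is bounded (true in all applications here, and equally implicit in the paper's argument).
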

		\begin{proof} We argue by contradiction.
		Assume that  
		$$
		\max_{z \in \Omega}\{ \underline{u} (z) - \overline{u} (z)\} \geq \theta >0.
		$$
		From Lemma \ref{lema.perturbacion.estrcto} we can assume that $ \underline{u}$ is a strict subsolution. 
		
		By taking the supremal convolution of $\underline{u}$,
		$$
		\underline{u}_\gamma (z) = \sup_{y \in \Omega } \Big(  \underline {u} (y) - \frac{1}{2\gamma} 
		|z-y|^2 \Big),
		$$
		and the infimal convolution of $\overline{u}$, 
		$$
		\overline{u}_\gamma (z) = \inf_{y \in \Omega } \Big(  \overline {u} (y) + \frac{1}{2\gamma}
		|z-y|^2 \Big),
		$$
		we obtain 
		a pair of a strict subsolution and a supersolution that are semiconvex and semiconceve and such that
		the difference attains a positive maximum inside $\Omega$, 
		$$
		\max_{z \in \Omega}\{ \underline{u}_\gamma (z) - \overline{u}_\gamma (z) \}\geq \theta >0.
		$$
		
		Now we need the following statement (see Lemma 5.2 in \cite{BGJ13}).
Let $W$ be semiconvex and $V$ be semiconcave. Suppose that $z_0 \in \Omega$ is
a maximum point of $W - V$ such that
$$
W(z_0) -V (z_0) - \max_{\partial \Omega}
(W - V ) > 0.
$$
Then there exists a sequence of points $z_k \to z_0$,  
and  $(p_k, A_k) \in J^{2,+}W(z_k)$, and
$(q_k, B_k) \in J^{2,-}V (z_k)$ and a constant $K > $0, which may depend on the semiconvexity
constant of $W - V$, such that
$$
 |p_k - q_k| \to 0 \qquad \mbox{as }, k \to \infty,
 $$
 $$
 A_k - B_k \leq - \frac{K}{k} I .
 $$
Here $J^{2,+}$ ad $J^{2,-}$ denote the semijets.

Using this statement with $W= \underline{u}_\gamma $ and $V=  \overline{u}_\gamma$ 
we obtain a sequence $z_k \to z_0$, $(p_k,A_k)$ and $(q_k, B_k)$, such that
the previous conditions hold and 
$$
\min_{|v|=1} \{\alpha \langle A_k v,v \rangle + (1-\alpha)| \langle p_k ,v \rangle |^2 \}\geq c >0, 
$$
and 
$$
\min_{|v|=1}\{ \alpha \langle B_k v,v \rangle + (1-\alpha)| \langle q_k ,v \rangle |^2 \}\leq 0.
$$
Take $v_k$ such that the minimum is achieved in the second inequality, so that
$$
 \alpha \langle B_k v_k,v_k \rangle + (1-\alpha)| \langle q_k ,v_k \rangle |^2 \leq 0
$$
and it always also holds that
$$
 \alpha \langle A_k v_k,v_k \rangle + (1-\alpha)| \langle p_k ,v_k \rangle |^2 \geq  c > 0.
$$
Now, using
$$
 A_k - B_k \leq - \frac{K}{k} I ,
 $$
  we get
  $$
  \begin{array}{l}
  \displaystyle 
  \alpha \langle  \Big( A_k + \frac{K}{k} I  \Big) v_k,v_k \rangle
  + (1-\alpha)| \langle p_k ,v_k \rangle |^2 \\[7pt]
  \displaystyle 
  = \alpha \langle A_k v_k,v_k \rangle + \frac{K}{k}  + (1-\alpha)| \langle p_k ,v_k \rangle |^2
  \\[7pt]
  \displaystyle 
  \leq  \alpha \langle B_k v_k,v_k \rangle + (1-\alpha)| \langle q_k ,v_k \rangle |^2
  + (1-\alpha) \Big( | \langle p_k ,v_k \rangle |^2 -  |\langle q_k ,v_k \rangle |^2  \Big).
  \end{array}
  $$

Hence, we obtain
$$
\begin{array}{l}
\displaystyle 
c + \frac{K}{k} \leq \alpha \langle A_k v_k,v_k \rangle + \frac{K}{k}  + (1-\alpha)| \langle p_k ,v_k \rangle |^2
\\[7pt]
\displaystyle
\qquad \leq (1-\alpha) \Big( | \langle p_k ,v_k \rangle |^2 -  |\langle q_k ,v_k \rangle |^2  \Big).
\end{array}
$$
In general the constant of semiconvexity $K = K(\gamma) \to 0$ as $\gamma \to 0$. Therefore, sending 
$\gamma\to 0$ and $k\to \infty$ we reach a contradiction. 	
		\end{proof}
		
		Using the comparison lemma we get uniqueness of solutions.

\begin{corollary} \label{teo.uni} Let $\alpha\neq 0$.
		There exists a unique viscosity solution (testing with $N$-dimensional test functions) to 
		\begin{equation} \label{ooo}
		\left\{
		\begin{array}{l}
	\displaystyle 
	 \mathcal{L}_\alpha  u (z) =  \inf_{ |v|=1} \Big\{
		\alpha \langle D^2 u(z) v, v \rangle + (1-\alpha)  |\langle D u(z), v \rangle |^2
		\Big\} =0, 
		\mbox{ in } \Omega, \\[7pt]
		u(z)=g (z), \qquad \mbox{ on } \partial \Omega.
		\end{array}
		\right.	
		\end{equation}
		\end{corollary}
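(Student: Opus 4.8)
The plan is to read off uniqueness from the comparison principle of Lemma~\ref{teo.compar} and to obtain existence by exhibiting the $\alpha$-convex envelope as the solution. Uniqueness is immediate: if $u_1$ and $u_2$ both solve \eqref{ooo}, then each is simultaneously a sub- and a supersolution agreeing with $g$ on $\partial\Omega$; viewing $u_1$ as supersolution and $u_2$ as subsolution, Lemma~\ref{teo.compar} gives $u_1 \geq u_2$ in $\Omega$, and interchanging their roles gives $u_1 = u_2$.

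For existence I would take the $\alpha$-convex envelope $u^*_\alpha$ of Definition~\ref{def:convex-env} as the candidate. The barrier construction in the proof of Theorem~\ref{thm:cont} shows that $u^*_\alpha$ is continuous up to $\partial\Omega$ and attains the datum $g$ there. Being a supremum of $\alpha$-convex functions, $u^*_\alpha$ is $\alpha$-convex by Lemma~\ref{lema.sup}, hence a viscosity subsolution of $\mathcal{L}_\alpha u = 0$ by Theorem~\ref{teo.1.intro}; by Remark~\ref{rem:conincide} this agrees with the subsolution notion of Definition~\ref{defi-visc.N} used in the comparison lemma. It remains only to verify that $u^*_\alpha$ is a supersolution.

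For the supersolution property I would argue by a Perron-type bump. Suppose it failed at an interior point $z_0$, so that some $\phi \in C^2$ touches $u^*_\alpha$ strictly from below at $z_0$ with $\mathcal{L}_\alpha \phi(z_0) > 0$. Since adding a constant alters neither $D\phi$ nor $D^2\phi$, and the coefficients are continuous, $\phi + \delta$ still satisfies $\mathcal{L}_\alpha(\phi + \delta) > 0$ on a small ball $B_r(z_0)$ while $\phi + \delta < u^*_\alpha$ on $\partial B_r(z_0)$ for $\delta$ small. Setting $U = \max\{u^*_\alpha, \phi + \delta\}$ on $B_r(z_0)$ and $U = u^*_\alpha$ elsewhere yields a function that coincides with $u^*_\alpha$ near $\partial B_r(z_0)$, glues to a global viscosity subsolution, yet strictly exceeds $u^*_\alpha$ at $z_0$ while staying $\leq g$ on $\partial\Omega$. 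The hard part will be turning $U$ back into an admissible competitor in \eqref{convex-envelope-X.99}: one invokes that the maximum of two subsolutions is again a subsolution on $B_r(z_0)$, that the seamless overlap near $\partial B_r(z_0)$ makes the gluing valid, and that Theorem~\ref{teo.1.intro} then identifies the global subsolution $U$ with an $\alpha$-convex function. This $U$ contradicts the maximality defining $u^*_\alpha$, completing the proof.
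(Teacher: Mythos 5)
Your proposal is correct and follows essentially the same route as the paper: uniqueness is read off from the comparison principle of Lemma~\ref{teo.compar}, and existence is supplied by the $\alpha$-convex envelope, whose subsolution property comes from Lemma~\ref{lema.sup} and Theorem~\ref{teo.1.intro} and whose supersolution property is verified by exactly the bump-and-maximum competitor argument the paper uses in the proof of Theorem~\ref{teo.2.intro}. The only cosmetic point is that the implication you need from Remark~\ref{rem:conincide} (a subsolution for one-dimensional test functions is a subsolution for $N$-dimensional test functions) is the elementary direction already recorded before Definition~\ref{defi-visc}, so no circularity with the corollary arises.
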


\begin{remark}
\label{rem:conincide} At this point we highlight that both notions of solution to the PDE coincide. 
	Remark that, as we already explained before Definition \ref{defi-visc}, if $u$ is a viscosity solution testing with $1$-dimensional from above and $N$-dimensional from below then it is a viscosity solution testing with $N$-dimensional functions from both sides. Since we have uniqueness of viscosity solutions testing with $N$-dimensional functions from both sides we obtain that both notions of viscosity solutions
	are equivalent whenever $\alpha\neq 0$. 
		
	{ In fact, assume that we have a continuous solution $u$ to the PDE testing with $N$-dimensional functions. Take a ball $B_R$ inside $\Omega$ and look for the convex envelope inside the ball of $u|_{\partial B_R}$ (call this convex envelope $v$). Then $v$, as the ball is strictly convex, $v$ is a solution of the PDE inside $B_R$ testing $1$-dimensional functions (this follows from our results since $v$ is the $\alpha-$convex envelope of $u|_{\partial B_R}$ in $B_R$, see below). Then, $v$ is also a solution testing with 
	$N$-dimensional functions and, from the comparison result (that implies uniqueness) we obtain that $u=v$ in $B_R$. Therefore, it turns out that $u$ is also a solution to the PDE in $B_R$ testing with 1-dimensional functions. Hence, both notions of solution coincide, a continuous function is a solution testing with $N$-d functions if and only if it is a solution testing with $1$-d functions.	
	
	Notice that we are using that the domain is strictly convex just to obtain continuity
	up to the boundary of the $\alpha$-convex envelope of a continuous boundary datum, however the fact that the $\alpha$-convex envelope is a solution to the PDE inside $\Omega$ 
	is a local property that holds testing both with $N$-d or with $1$-d functions regardless of the strict convexity of $\Omega$. 
	} 
\end{remark}

%
Now, we show that the $\alpha$-convex envelope is in fact a solution to the PDE
\begin{equation} \label{convex-envelope-X.5567999}
	\mathcal{L}_\alpha  u (z) := 
	 \inf_{
	\substack{x,y \in \overline{\Omega}  \\
		z=t_0 x+ (1-t_0)y} } L_\alpha (u) (t_0) = 0, \qquad z \in \Omega,
		\end{equation}
		in the viscosity sense given in Definition \ref{defi-visc}.

\subsection{Proof of Theorem \ref{teo.2.intro}: viscosity solutions and the $\alpha$-convex envelope}

\begin{proof}[Proof of Theorem \ref{teo.2.intro}]
The statement of continuity in $\overline{\Omega}$ follows from Theorem~\ref{thm:cont}.
Then we recall that the supremum of $\alpha$-convex functions is $\alpha$-convex
as shown in Lemma \ref{lema.sup}. 
By Theorem \ref{teo.1.intro} we obtain
	that $u_\alpha^*$ verifies 
	\begin{equation} \label{convex-envelope-X.5567}
	\mathcal{L}_\alpha  u (z) := 
	 \inf_{
	\substack{x,y \in \overline{\Omega}  \\
		z=t_0 x+ (1-t_0)y} } L_\alpha (u) (t_0) \geq 0, \qquad z \in \Omega,
		\end{equation}
	touching with $1$-dimensional test functions from above.

	To show the reverse inequality we argue by contradiction. Assume that there exists 
	a point $z_0 \in \Omega$ and a smooth $N-$dimensional test function $\psi$ such that
	$u_\alpha^* - \psi$ has a strict minimum at $z_0$ with 
	\begin{equation} \label{convex-envelope-X.55678} 
	 \inf_{
	\substack{x,y \in \overline{\Omega}  \\
		z_0=t_0 x+ (1-t_0)y} } L_\alpha (\psi) (t_0) \geq c > 0.
		\end{equation}
		As $\psi$ is smooth there exists $r>0$ small such that 
		\begin{equation} \label{convex-envelope-X.556789}
	 \inf_{
	\substack{x,y \in \overline{\Omega}  \\
		z=t_0 x+ (1-t_0)y} } L_\alpha (\psi) (t_0) \geq \frac{c}{2} > 0, \qquad \text{ for all } z \in B_r (x_0) \subset \Omega.
		\end{equation}
		Now we take 
		$$
		\hat{u} (x) = \max \big\{ u_\alpha^*(x) ; \psi (x) + \delta \big\}
		$$
		with $\delta >0 $ small such that $u_\alpha^*(x) > \psi (x) + \delta$ for every $x \in \Omega \setminus B_r (z_0)$.
		This function $\hat{u}$ is $\alpha$-convex since it coincides with $u_\alpha^*$ in $\Omega \setminus B_r (z_0)$ and both $u_\alpha^*$ and $\psi$ are $\alpha$-convex in $B_r (z_0)$. Then, we have that $\hat{u}$ is $\alpha$-convex with $\hat{u}|_{\partial \Omega}
		\leq g$  and
		$$
		\hat{u} (z_0) = \max \big\{ u_\alpha^*(z_0) ; \psi (z_0) + \delta \big\} =\psi (z_0) + \delta > u_\alpha^* (z_0).
		$$
This contradicts the fact that $u_\alpha^*$ is defined as the supremum in \eqref{convex-envelope-X.99}. 

Finally the theorem follows from the uniqueness of solution for the equation with the Dirichlet boundary condition.
\end{proof}

\subsection{Proof of Theorem \ref{th.continu}: A bridge between convex and quasiconvex envelopes}
	\label{sect-bridge}

	In this section we show that the $\alpha$-convex envelopes $u^*_\alpha$  in Definition \ref{def:convex-env} 	constitute a non-increasing and continuous (with respect to $\sup$-norm) curve $\alpha \mapsto u^*_\alpha$. Moreover, it goes from quasiconvexity ($u^*_0$ is the quasiconvex envelope) to convexity ($u^*_1$ is the convex 
	envelope). 
	
	\begin{proof}[Proof of Theorem \ref{th.continu}]
	First, let us show that the $\alpha$-convex envelopes are non-increasing with $\alpha$. 
	
	Take $u_\alpha$ an $\alpha$-convex function. Then by Proposition \ref{prop:bigger-alpha-stronger}, $u_{\alpha}$ is also  $\hat \alpha$-convex function for $\hat \alpha<\alpha$. 
	From this fact it follows that
	$$
	\begin{array}{l}
	\displaystyle 
	u^*_\alpha (x) = \sup \Big\{v(x) : v \mbox{ is $\alpha-$convex and verifies } v|_{\partial \Omega} \leq g \Big\}
	 \\[7pt]
	 \displaystyle \quad \qquad  \leq \sup \Big\{v(x) : v \mbox{ is $\hat \alpha-$convex and verifies } v|_{\partial \Omega} \leq g \Big\} 
	=u^*_{\hat{\alpha}} (x)
	\end{array}
	$$
	and then we conclude that $\alpha \mapsto u^*_\alpha$ is non-increasing. By using Theorem \ref{thm:cont} together with Arzel\`a-Ascoli's theorem, and passing to a subsequence if necessary, we get that 
	 $$
	\hat{w} (z) = \lim_{j\to \infty }  u^*_{\alpha_j} (z)
	$$
	uniformly to a continuous limit on $\overline \Om$ where $\alpha_j\to\hat\alpha$.	Moreover, 	as $u^*_{\alpha_j} $ is $\alpha_j$-convex we have that
	\begin{equation} \label{late}
	u^*_{\alpha_j} (tx+(1-t)y) \leq v_j (t)
	\end{equation}
	with $v_j$ the solution to 
	\begin{equation} \label{coffe.33}
	\left\{
	\begin{array}{l}
\displaystyle L_{\alpha_j} (v_j) (t) =\alpha_j v_{j} '' (t) + (1-\alpha_j) |v_j'(t)|^2 =0, \qquad t \in (0,1), \\
\displaystyle v_j (0) = u_{\alpha_j} (y), \\ 
v_j (1) = u_{\alpha_j} (x),
\end{array}
\right.
\end{equation}
	and since the equation is stable by  Remark~\ref{rem:continuity1-d} we get
	$$
\hat{w} (tx+ (1-t)y) \leq \hat{v} (t)
$$
{where $\hat{v}$ satisfies
\begin{equation} \label{coffe.335}
	\left\{
	\begin{array}{l}
\displaystyle L_{\hat \alpha} (\hat v) (t)=0, \qquad t \in (0,1), \\
\displaystyle \hat v (0) = \hat{w} (y), \\ 
\hat v (1) = \hat{w}(x).
\end{array}
\right.
\end{equation}}
Thus the limit is $\hat{w}$ is $\hat{\alpha}$-convex.

	{\bf First case: $\alpha_j$ increasing.} 		Now, we just observe that $u^*_{\hat{\alpha}} \leq u^*_{\alpha_j}$
		and passing to the limit as $\alpha_j \to \hat{\alpha}$ we obtain 
		$$
		u^*_{\hat{\alpha}} (z) \leq \hat{w}(z)  \qquad z \in \overline{\Omega}.
		$$
		Since $u^*_{\hat{\alpha}}$ is defined as a supremum of $\hat{\alpha}$-convex functions, we conclude that $$u^*_{\hat{\alpha}} (z) \equiv \hat{w}(z)$$ and hence
		$$
		\lim_{\alpha_j \to \hat{\alpha}} u^*_{\alpha_j} (z) = u^*_{\hat{\alpha}} (z).
		$$
		
		{\bf Second case: $\alpha_j$ decreasing with limit $ \hat{\alpha}\neq 0$.} 
	Since now $u^*_{\alpha_j}\le u^*_{\hat \alpha}$, we immediately have
	\begin{align}
\label{eq:dec-a-from-above}
	 \lim_{j\to \infty }  u^*_{\alpha_j} (z) = \hat{w} (z)  \leq u^*_{\hat{\alpha}} (z), \qquad z \in \overline{\Omega}, 
\end{align}
and it remains to show that $\hat{w} \geq u^*_{\hat{\alpha}}$.
Let $\psi$ be a smooth $N-$dimensional function that touches $\hat{w}$
from below at $z_0 \in \Omega$. 
From the 
uniform convergence of $ u_{\alpha_n}^*$ to $\hat{w}$ we obtain the existence of a sequence of points with
$z_n \to z_0$ and such that $\hat{w} - \psi$ has a minimum at $z_n$. 
Next we denote
 $$
 (L_{\alpha_n})_v (u_{\alpha_n}) (z):=\alpha_n \langle D^2 u_{\alpha_n}(z) v, v \rangle \! + \! (1-\alpha_n)  |\langle D u_{\alpha_n}(z) ,v \rangle |^2. 
 $$
Now, using that $u^*_{\alpha_n}$ is  by Theorem \ref{teo.2.intro} a 
viscosity solution to
\begin{equation} \label{convex-envelope-X.5562288}
	\inf_{|v|=1} (L_{\alpha_n})_v (u_{\alpha_n}) (z) =0, \qquad z \in \Omega,
		\end{equation}
we obtain that  
\begin{equation} \label{convex-envelope-X.556228899}
	\inf_{|v|=1} (L_{\alpha_n})_v (\psi) (z_n) \leq 0.
		\end{equation}
		Since $\psi$ is smooth, the infimum is attained.
	This means that there is a sequence of directions $v_n$, $|v_n | =1$, such that
	$$
	(L_{\alpha_n})_{v_n} (\psi) (z_n) \leq 0.
	$$
	We can assume (extracting a subsequence if necessary) that $v_n \to v_\infty$ with $|v_\infty|=1$.
	Passing to the limit we obtain that 
	$$
	(L_{\hat \alpha})_{v_\infty} (\psi) (z_0) \leq 0,
	$$
	and hence we get 
	\begin{equation} \label{convex-envelope-X.556228899.77}
	\inf_{|v|=1} (L_{\hat \alpha})_v (\psi) (z_0) \leq 0.
		\end{equation}
		Then, we use the comparison principle Lemma \ref{teo.compar} to get $\hat{w} \ge u^*_{\hat{\alpha}}$. Altogether, we have shown
		$$
		\hat{w} \equiv u^*_{\hat{\alpha}}.
		$$

	{\bf Third case: $\alpha_j$ decreasing with $\alpha_j \to 0$.}
As before, we consider
	$$
	\hat{w} (z) = \lim_{j\to \infty }  u^*_{\alpha_j} (z). 
	$$

Since $u^*_{\alpha_j}\le u^*_{0}$, it follows that $\hat w\le u^*_{0}$. 
To establish the reverse inequality, we use the comparison principle for quasiconvex viscosity sub-/supersolution, Corollary 5.6 in \cite{BGJ13} instead of Lemma~\ref{teo.compar}.
Let $\psi$ be a smooth $N-$dimensional function that touches $\hat{w}$
from below at $z_0 \in \Omega$. 
From the 
uniform convergence of $ u_{\alpha_n}^*$ to $\hat{w}$ we obtain the existence of a sequence of points with
$z_n \to z_0$ and such that $\hat{w} - \psi$ has a minimum at $z_n$. 
Next we denote $(L_{\alpha_n})_v (u_{\alpha_n}) (z):=\alpha_n \langle D^2 u_{\alpha_n}(z) v, v \rangle \! + \! (1-\alpha_n)  |\langle D u_{\alpha_n}(z) ,v \rangle |^2 $.
Now, using that $u^*_{\alpha_n}$ is  by Theorem \ref{teo.2.intro} a 
viscosity solution to
\begin{equation} 
	\inf_{|v|=1} (L_{\alpha_n})_v (u_{\alpha_n}) (z) =0, \qquad z \in \Omega,
		\end{equation}
we obtain 
\begin{equation} 
	\inf_{|v|=1} \alpha_n \langle D^2\psi(z_n) v, v \rangle+\! (1-\alpha_n)  |\langle D \psi(z_n) ,v \rangle |^2 \leq 0.
		\end{equation}
			Since $\psi$ is smooth, the infimum is attained.
	This means that there is a sequence of directions $v_n$, $|v_n | =1$, such that
	$$
	(L_{\alpha_n})_{v_n} (\psi) (z_n) \leq 0.
	$$
	We can assume (extracting a subsequence if necessary) that $v_n \to v_\infty$ with $|v_\infty|=1$.
	Passing to the limit we obtain that 
	$$
	|\langle D \psi(z_0) ,v_{\infty} \rangle|^2=(L_{0})_{v_\infty} (\psi) (z_0) \leq 0.
	$$
	 Since
	 \begin{align*}
 0\ge \alpha_n \langle D^2\psi(z_n) v_n, v_n \rangle+\! (1-\alpha_n)  |\langle D \psi(z_n) ,v_n \rangle |^2 \ge \langle D^2\psi(z_n) v_n, v_n \rangle
\end{align*}
and $\psi$ is smooth, 
it also follows that 
\begin{align*}
\langle D^2\psi(z_0) v_{\infty}, v_{\infty} \rangle\le 0.
\end{align*}	
Thus we have shown
\begin{align*}
 \min_{ \substack{v \colon |v|=1, \\
		\langle v,  D  \psi(z_0) \rangle =0}} 
		\langle D^2 \psi(z_0) v, v \rangle \le 0,
\end{align*}
i.e.\  $\hat w$ is a quasiconvex viscosity supersolution with boundary values $g$. 
		Since $u^*_0$ is the quasiconvex solution to the same equation with the boundary values $g$, the comparison principle, Corollary 5.6 in \cite{BGJ13}, implies $\hat{w} \ge u^*_{0}$. Altogether, we have shown
		$$
		\hat{w} \equiv u^*_{0}. \qedhere
		$$
		
			\end{proof}

\section{Regularity of the $\alpha$-convex envelope} \label{sect-regul}

Now our goal is to prove that the $\alpha$-convex envelope is $C^1$ for $\alpha\neq 0$ in a strictly convex domain a suitable boundary data.  
This result could be compared to the $C^{1,\alpha}$-result of Oberman and Silvestre \cite{OS} for the convex envelope. We follow a similar path but our result is different and we make a weaker assumption on the boundary data.

First, we introduce the following definition to state our assumptions on the boundary data. The idea of this assumption is to prevent a wedge-like behavior propagating from the boundary into the domain: we call the desired behaviour, i.e.\ that the boundary function cannot be touched from below by a V-shaped function, by NV (Not V).
\begin{definition}[V-shaped touching]
\label{eq:touch-v}
Given $g:\partial\Omega\to\R$ we say that we can touch $g$ from below at $x_0$ with a V shaped function if there exist two smooth functions $\pi$ and $\tilde \pi$ such that $g(x_0)=\pi(x_0)=\tilde\pi(x_0)$, $ D \pi(x_0)\neq D \tilde\pi(x_0)$, and for some $r>0$ we have 
\[
\max\{\pi(x),\tilde\pi(x)\}\leq g(x)
\]
for every $x\in B_r(x_0)\cap \partial\Omega$ and $B_r(x_0)\cap\{x\in \Omega:\pi(x)=\tilde\pi(x)\}=L\cap B_r(x_0)\cap\Omega\neq \emptyset$ for some hyperplane $L$.
\end{definition}
\begin{definition}[NV-property] \label{NV-property}
We say that $g:\partial\Omega\to\R$ is NV or has the NV-property if it cannot be touched from below at any point with a V shaped function according to Definition~\ref{eq:touch-v}. 
\end{definition}

We illustrate the need of the last condition in the definition of V-shaped touching in the following example.

\begin{example} {\rm
We consider $\Omega=B_1\subset \R^2 $.
The function $g(x,y)=x-1$ is NV (this will follow from Lemma~\ref{difNV}).
Consider $\pi(x)=x-1$ and $\tilde \pi(x)=2-x$.
This pair does not constitute a V-shaped touching since $\{x\in \Omega:\pi(x)=\tilde\pi(x)\}=\emptyset$.}
\end{example}

A differentiable function defined in an open set cannot be touched from below at any point with a V shaped function. Since $g$ is only defined in $\partial\Omega$, the definition of NV does not only involve the regularity of $g$ but also the regularity of the boundary of the domain. We illustrate the undesirable behavior in the following examples.

\begin{example} {\rm 
We consider $\Omega=B_1\subset \R^2$.
The function $g(x,y)=|x|$ it is not NV since we can touch it with a V shape function.
We can verify this, according to the Definition~\ref{eq:touch-v}, by considering $\pi(x)=x$ and $\tilde \pi(x)=-x$.}
\end{example}

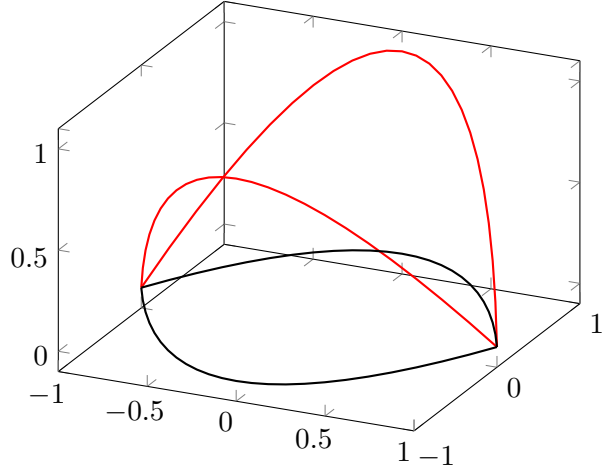
\begin{figure}
\begin{tikzpicture}
\begin{axis}
\addplot3[thick,red,variable=x,domain=-1:1,samples y=0] (x,1-x^2,1-x^2);
\addplot3[thick,red,variable=x,domain=-1:1,samples y=0] (x,x^2-1,1-x^2);
\addplot3[thick,black,variable=x,domain=-1:1,samples y=0] (x,1-x^2,0);
\addplot3[thick,black,variable=x,domain=-1:1,samples y=0] (x,x^2-1,0);
\end{axis}
\end{tikzpicture}
\caption{The function in Example~\ref{ex}.}
\label{fig:ex}
\end{figure}

\begin{example}
\label{ex}
{\rm
We consider $\Omega=\{(x,y)\in\R^2:x^2-1\leq y\leq 1-x^2\}$ and $g(x,y)=1-x^2$, see Figure~\ref{fig:ex}.
Even though $g$ is smooth it is not NV.
This follows immediately by observing  that $g(x,y)=|y|$ on $\partial\Omega$.}
\end{example}

\begin{definition}
We say that $g$ is \textit{upper differentiable} at $x_0$ if there exists a linear transformation $T$ such that
\[
g(x)-g(x_0)-T(x-x_0)\leq o(|x-x_0|).
\] 
\end{definition}

Observe that for a differentiable function we have that there exists $T$ such that
\[
|g(x)-g(x_0)-T(x-x_0)|\leq o(|x-x_0|).
\] 
So, in particular it is upper differentiable.

Also observe that an upper differentiable function may not be differentiable.
For example $g(x)=-|x|$ is upper differentiable at 0 since the inequality holds for $T\equiv 0$.

\begin{lemma}
\label{comp}
If $f$ and $h$ are differentiable at $x_0$, $g$ is upper differentiable at $h(x_0)$, $f(x_0)= g(h(x_0))$ and 
\[
f\leq g \circ h
\]
in a neighborhood of $x_0$, then $Df(x_0)= T Dh(x_0)$ where
$T$ is the linear transformation given by the definition of upper differentiable for $g$.
\end{lemma}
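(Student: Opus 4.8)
The plan is to chain the three hypotheses into a single one-sided estimate for the linear functional $A := Df(x_0) - T\,Dh(x_0)$ and then to show that a linear functional bounded above by $o(|x-x_0|)$ throughout a full neighborhood must vanish. Write $y_0 = h(x_0)$. First I would record the three first-order expansions coming from the hypotheses: differentiability of $f$ at $x_0$ gives $f(x) - f(x_0) = Df(x_0)(x-x_0) + o(|x-x_0|)$; differentiability of $h$ at $x_0$ gives $h(x)-h(x_0) = Dh(x_0)(x-x_0)+o(|x-x_0|)$, and in particular the linear rate $|h(x)-h(x_0)| \le C|x-x_0|$ for $x$ near $x_0$; and upper differentiability of $g$ at $y_0$ gives $g(y) - g(y_0) \le T(y-y_0) + o(|y-y_0|)$.

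Next I would substitute $y = h(x)$ into the last inequality. Because $|h(x)-h(x_0)| \le C|x-x_0|$, the remainder $o(|h(x)-h(x_0)|)$ is itself $o(|x-x_0|)$; and because $T$ is linear and bounded, $T$ applied to the remainder in the expansion of $h$ is again $o(|x-x_0|)$. Combining these yields $g(h(x)) - g(h(x_0)) \le T\,Dh(x_0)(x-x_0) + o(|x-x_0|)$. Now I would invoke the hypotheses $f\le g\circ h$ near $x_0$ and $f(x_0)=g(h(x_0))$ to get $f(x)-f(x_0) \le T\,Dh(x_0)(x-x_0)+o(|x-x_0|)$, and subtract the expansion of $f$ to reach the central estimate
\[
A(x-x_0) \le o(|x-x_0|), \qquad A := Df(x_0) - T\,Dh(x_0),
\]
valid for all $x$ in a neighborhood of $x_0$.

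Finally I would conclude that $A=0$ by testing this estimate along rays. Fixing a unit vector $u$ and setting $x = x_0 + tu$ with $t\to 0^+$ gives $t\,A(u) \le o(t)$, whence $A(u)\le 0$; applying the same with $-u$ in place of $u$ gives $A(u)\ge 0$, so $A(u)=0$. As $u$ is an arbitrary direction, $A\equiv 0$, that is, $Df(x_0) = T\,Dh(x_0)$.

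The only genuinely delicate point is the bookkeeping of the error terms in the second paragraph: one must use differentiability of $h$ (not merely its continuity) to guarantee both the linear rate $|h(x)-h(x_0)|=O(|x-x_0|)$, which converts $o(|h(x)-h(x_0)|)$ into $o(|x-x_0|)$, and the boundedness of $T$, which keeps $T$ of an $o(|x-x_0|)$ term of order $o(|x-x_0|)$. Everything else is a direct combination of inequalities, and it is the two-sided ray argument that upgrades the one-sided bound inherited from upper differentiability into the exact identity.
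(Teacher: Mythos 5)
Your argument is correct and follows essentially the same route as the paper: substitute $y=h(x)$ into the upper-differentiability inequality, use the linear rate of $h$ and boundedness of $T$ to keep all remainders of order $o(|x-x_0|)$, combine with $f\le g\circ h$ and the expansion of $f$ to get the one-sided bound on $Df(x_0)-T\,Dh(x_0)$, and conclude. Your explicit two-sided ray argument at the end is in fact slightly more careful than the paper's terse final step, which passes from the one-sided inequality to the identity without comment.
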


\begin{proof}
On the one hand, by the definition of diferentiable and upper differentiable, we have
\[
|h(x)-h(x_0)+Dh(x_0)(x-x_0)|\leq o(|x-x_0|)
\]
and
\[
g(y)\leq g(h(x_0))+T(y-h(x_0))+o(|y-h(x_0)|).
\]
So we get, choosing $y=h(x)$,
\[
\begin{split}
g(h(x))&\leq  g(h(x_0))+T((Dh(x_0)(x-x_0)+o(|x-x_0|))\\
&\quad\quad\quad+o(|Dh(x_0)(x-x_0)|+ o(|x-x_0|)|)\\
&=  g(h(x_0))+T Dh(x_0)(x-x_0)+o(|x-x_0|).
\end{split}
\]

On the other hand we have
\[
f(x)\geq f(x_0)+Df(x_0)(x-x_0)+o(|x-x_0|),
\]
and then we get
\[
\begin{split}
f(x)&\leq g \circ h(x)\\
f(x_0)+Df(x_0)(x-x_0)&\leq g(h(x_0))+T Dh(x_0)(x-x_0)+o(|x-x_0|).
\end{split}
\]
Since $f(x_0)= g(h(x_0))$ we obtain
\[
Df(x_0)(x-x_0)\leq T Dh(x_0)(x-x_0)+o(|x-x_0|)
\]
(here we interpret $T$ as the matrix corresponding to the linear transformation so that $T Dh(x_0)$ immediately makes sense)
and we conclude that $$Df(x_0)= T Dh(x_0)$$ as desired. 
\end{proof}

\begin{lemma}
\label{difNV}
Let $\Omega$ be strictly convex with a differentiable boundary and $g$ upper differentiable, then $g$ is NV.
\end{lemma}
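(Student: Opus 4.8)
The plan is to argue by contradiction: suppose $g$ is upper differentiable but fails to be NV, so that $g$ can be touched from below at some $x_0\in\partial\Omega$ by a V-shaped pair $(\pi,\tilde\pi)$ with associated hyperplane $L$ as in Definition~\ref{eq:touch-v}. I will extract a contradiction from the interplay between the differentiability forced by Lemma~\ref{comp} and the strict convexity of $\Omega$.

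First I would record the gradient information at $x_0$. Since $\partial\Omega$ is differentiable, near $x_0$ it is the graph of a differentiable map $h$ over the tangent hyperplane $T_{x_0}\partial\Omega$, with $h(s_0)=x_0$ and $Dh(s_0)$ injective with image $T_{x_0}\partial\Omega$. The boundary inequality gives $\pi\circ h\le g\circ h$ and $\tilde\pi\circ h\le g\circ h$ near $s_0$, with equality at $s_0$ because $\pi(x_0)=\tilde\pi(x_0)=g(x_0)$. Applying Lemma~\ref{comp} to each of $f=\pi\circ h$ and $f=\tilde\pi\circ h$ (here $\pi,\tilde\pi$ are smooth, $h$ is differentiable, and $g$ is upper differentiable at $h(s_0)=x_0$) yields $D(\pi\circ h)(s_0)=T\,Dh(s_0)=D(\tilde\pi\circ h)(s_0)$. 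Subtracting and using the chain rule, $(D\pi(x_0)-D\tilde\pi(x_0))\,Dh(s_0)=0$; since the image of $Dh(s_0)$ is the whole tangent hyperplane, the nonzero vector $D\pi(x_0)-D\tilde\pi(x_0)$ must be orthogonal to $T_{x_0}\partial\Omega$, i.e.\ parallel to the outer normal $\nu(x_0)$.

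Next I would identify $L$ with the tangent hyperplane. Set $\psi=\pi-\tilde\pi$; then $\psi(x_0)=0$ and $D\psi(x_0)\ne 0$, so near $x_0$ the coincidence set $S=\{\psi=0\}$ is a smooth hypersurface whose normal at $x_0$ is $D\psi(x_0)\parallel\nu(x_0)$; hence $S$ is tangent to $\partial\Omega$ at $x_0$. Inside $\Omega$ the V-shape condition forces $S$ to coincide with the flat piece $L\cap\Omega\cap B_r$, so on that piece $D\psi\parallel n_L$, the normal of $L$. Letting points of $L\cap\Omega$ approach $x_0$ and using continuity of $D\psi$ together with $D\psi(x_0)\ne 0$ gives $n_L\parallel\nu(x_0)$; since $x_0\in\overline{L}=L$, the hyperplane $L$ is exactly $T_{x_0}\partial\Omega$. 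Finally, strict convexity enters decisively: the tangent hyperplane at a boundary point of a strictly convex domain is a supporting hyperplane and meets $\overline{\Omega}$ only at $x_0$ (if it met $\overline{\Omega}$ at a second point, the midpoint would lie in $\Omega$ by strict convexity yet on the hyperplane, impossible for a supporting hyperplane of an open set). Thus $L\cap\Omega=\emptyset$, contradicting $L\cap B_r(x_0)\cap\Omega\ne\emptyset$.

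The main obstacle is the middle assertion that the flat coincidence set $L\cap\Omega$ actually accumulates at $x_0$, so that $x_0\in L$ and the limiting-normal argument applies. The delicate point is that $\{\psi=0\}$ is only tangent to $\partial\Omega$ at $x_0$, so a priori it could stay outside $\Omega$ near $x_0$ and dip into $\Omega$ only farther along; ruling this out is where one must combine the flatness of $S\cap\Omega$ with the strict concavity of the boundary graph (from strict convexity) and the global upper differentiability of $g$ — a crease of $\max\{\pi,\tilde\pi\}$ at a point where $S$ transversally crosses $\partial\Omega$ would otherwise contradict upper differentiability of $g$ there. Once $x_0\in\overline{L\cap\Omega}$ is secured, the remaining steps are immediate.
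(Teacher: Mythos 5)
Your argument follows essentially the same route as the paper's proof: parametrize $\partial\Omega$ near $x_0$ by a regular differentiable map $h$, apply Lemma~\ref{comp} to $\pi\circ h$ and $\tilde\pi\circ h$ to conclude that $D\pi(x_0)-D\tilde\pi(x_0)$ annihilates the tangent space and is therefore normal to $\partial\Omega$ at $x_0$, identify $L$ with the tangent hyperplane at $x_0$, and invoke strict convexity to conclude $L\cap\Omega=\emptyset$, contradicting $L\cap B_r(x_0)\cap\Omega\neq\emptyset$. The ``main obstacle'' you flag --- that one must know $x_0\in\overline{L\cap\Omega}$ before concluding that $L$ is the tangent hyperplane --- is a genuine subtlety of the literal wording of Definition~\ref{eq:touch-v}, but it is not addressed in the paper's proof either: the paper simply treats $L$ as the hyperplane through $x_0$ orthogonal to $D\pi(x_0)-D\tilde\pi(x_0)$. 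In the only situation where the lemma is invoked (Lemma~\ref{lema-supporting-unique}) the crease $L$ contains $x_0$ and $L\cap\Omega$ contains a segment from $x_0$ to an interior point, so the accumulation is automatic and your proof is complete under that reading. One caution: your sketched repair for the general case --- that a crease of $\max\{\pi,\tilde\pi\}$ at a point where the coincidence set crosses $\partial\Omega$ transversally would violate upper differentiability of $g$ --- does not work as stated, because away from $x_0$ one only has $\max\{\pi,\tilde\pi\}\le g$ with no equality, so no crease of $g$ itself is produced at such a crossing point; if the accumulation at $x_0$ is to be enforced, it must come from the definition rather than from this argument.
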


\begin{proof}
Suppose not. Then there exists $x_0 \in \partial\Omega$ and two differentiable functions $\pi$ and $\tilde \pi$ such that $g(x_0)=\pi(x_0)=\tilde\pi(x_0)$, $ D \pi(x_0)\neq D \tilde\pi(x_0)$ and for some $r>0$ we have 
\[
\max\{\pi(x),\tilde\pi(x)\}\leq g(x)
\]
for every $x\in B_r(x_0)\cap \partial\Omega$ and $B_r(x_0)\cap\{x\in \Omega:\pi(x)=\tilde\pi(x)\}=L\cap B_r(x_0)\cap\Omega\neq \emptyset$ for some hyperplane $L$.

Since $\partial\Omega$ is differentiable there exists a domain $V\subset\R^{N-1}$ and a regular differentiable function $h:V\to\R^N$ (we consider smaller $r$ if needed) that parametrize $B_r(x_0)\cap \partial\Omega$.
We have 
$$
\pi\circ h,\tilde\pi \circ h\leq g\circ h.
$$
If $T$ is the linear transformation given by the definition of upper differentiability for $g$, by Lemma~\ref{comp} the inequalities imply that at $x_0$ $$ D  \pi Dh=T Dh= D  \tilde \pi Dh.$$ We have that at $x_0$ $$ (D\pi -D\tilde \pi)  Dh= 0.$$
Since $h$ parametrices the boundary and is regular, $Dh$ spans the tangent to the boundary,  so $D\pi -D\tilde \pi$ is normal to the boundary.
The hyperplane $L$  (the set where $\pi$ and $\tilde \pi$ coincide) is normal to $ D \pi-\tilde  D  \pi$, so we get that $L$ has to be tangent to the boundary. 
Since $\Omega$ is strictly convex the tangent plane is outside $\Omega$.
We have reached a contradiction since $L\cap \Omega\neq \emptyset$.
\end{proof}

As an example where this result can be applied we mention the following. 

\begin{example} {\rm
By Lemma~\ref{difNV}, the function $g(x,y)=-|x|$ on $\partial B_1$ is NV even though it is not smooth.}
\end{example}

The NV-property can hold for non smooth domains as illustrated by the following example.

\begin{example}
\label{exampleASD}
{\rm
We consider $\Omega=\{(x,y)\in\R^2:x^2-1\leq y\leq 1-x^2\}$ and $g(x,y)=-\sqrt{|y|}$. 
Here we have the same domain as in Example~\ref{ex} but the function $g$ is different.

Let us show that this function is NV. We do that at (1,0), the point (-1,0) is analogous and in the rest of the points the domain is smooth.
Suppose not. Then we have the functions $\pi$, $\tilde \pi$ and the line $L$ given by Definition~\ref{eq:touch-v},but on the other hand it is impossible that such functions $\pi$, $\tilde \pi$ touch $g$ from below  at $(1,0)$ since derivative of $g$ is infinity there.

To be more precise, in a neighborhood of (1,0) we have that $\pi$ or $\tilde \pi$ is larger than the other for the points of the boundary with a positive $y$, suppose $\pi$ is. 
Then $\pi(x,1-x^2)\leq -\sqrt{1-x^2}$ with equality for $x=1$, but this is impossible since the derivative of the RHS at 1 is infinity and the LHS is smooth.

}
\end{example}

This shows that the concept of NV functions is quite rich as it contains
not only smooth functions in smooth domains (there are also other curious examples).

Observe that the singularity of $g$ plays an important role in Example~\ref{exampleASD}. A similar shaped function may fail. 
For example $g(x,y)=-|y|$ is not NV. This can be proved by considering $\pi(x,y)=4(x-1)+y$ and $\tilde \pi(x,y)=4(x-1)-y$ .

The next lemma states existence of a unique supporting $\alpha$-hyperplane for $\alpha$-convex envelope under suitable assumptions. For the definition of  an $\alpha$-hyperplane, see Definition~\ref{def:alpha-hyperplane}.

\begin{lemma} \label{lema-supporting-unique} Let $\alpha\neq 0$.
If $\Omega$ is strictly convex and the boundary data $g$ is continuous and NV then the $\alpha$-convex envelope $u^*_\alpha$ has a unique supporting $\alpha$-hyperplane at every point.
\end{lemma}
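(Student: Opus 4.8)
The plan is to argue the contrapositive: assuming that $u^*_\alpha$ has two distinct supporting $\alpha$-hyperplanes at some interior point $z_0\in\Omega$, I will manufacture a V-shaped touching of $g$ from below and thereby contradict the NV-property (Definition~\ref{NV-property}). Existence of at least one supporting $\alpha$-hyperplane is Theorem~\ref{chess}, so the whole content is uniqueness. The first reduction uses the observation recorded right after Definition~\ref{def:alpha-hyperplane}, namely that an $\alpha$-hyperplane touching $u^*_\alpha$ from below at $z_0$ is completely determined by the pair $\big(u^*_\alpha(z_0),\,D\pi_\alpha(z_0)\big)$. Hence two distinct supporting $\alpha$-hyperplanes $\pi_1\neq\pi_2$ at $z_0$ are distinguished by their gradients $p_1:=D\pi_1(z_0)\neq D\pi_2(z_0)=:p_2$, and I may form $w:=\max\{\pi_1,\pi_2\}$, which is $\alpha$-convex by Lemma~\ref{lema.sup} and Remark~\ref{1-d--2-d}, satisfies $w\le u^*_\alpha$, and touches $u^*_\alpha$ at $z_0$.

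The second ingredient is a lifting principle: if an $\alpha$-convex $\psi$ satisfies $\psi\le u^*_\alpha$ and touches it at an interior point, then the contact set $\{\psi=u^*_\alpha\}$ must reach $\partial\Omega$. Indeed, otherwise $\psi$ would lie below $g$ with a uniform gap on $\partial\Omega$, so $\psi+c$ would still be $\alpha$-convex and $\le g$ on $\partial\Omega$ for small $c>0$, contradicting the maximality in the definition of $u^*_\alpha$. The technical point that each $\pi_i$ is defined only on a half-space and tends to $-\infty$ at the edge of its domain is handled by replacing $\pi_i$ with $\max\{\pi_i,M\}$ for a constant $M<\min g$, which is globally defined and $\alpha$-convex. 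Applying this to $w$ produces a boundary contact point.

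The crucial geometric step is to locate a boundary contact point of $w$ that lies on the ridge $R:=\{\pi_1=\pi_2\}$, since only there do \emph{both} hyperplanes simultaneously meet $g$, which is exactly what creates the V. The ridge is a hypersurface through the interior point $z_0$, tangent there to $\{\langle z-z_0,\,p_1-p_2\rangle=0\}$, and it must cross $\partial\Omega$; here strict convexity enters, via the fact that a hyperplane tangent to $\partial\Omega$ meets $\overline{\Omega}$ only at the point of tangency, to select a crossing point $x_0\in R\cap\partial\Omega$ at which $R$ is transversal to $\partial\Omega$ and $\pi_1(x_0)=\pi_2(x_0)=g(x_0)$. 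At such $x_0$ I build the touching functions: writing $A_i$ for the affine tangent of $\pi_i$ at $x_0$, concavity of each $\pi_i$ (which holds because $v_i''\le 0$ along its direction, from $\alpha v''=-(1-\alpha)|v'|^2$) gives $A_i\ge\pi_i$ with $A_i-\pi_i=O(|\cdot-x_0|^2)$. Thus for $C$ large the functions $\pi:=A_1-C|\cdot-x_0|^2$ and $\tilde\pi:=A_2-C|\cdot-x_0|^2$ satisfy $\pi\le\pi_1\le g$ and $\tilde\pi\le\pi_2\le g$ on $\partial\Omega\cap B_r(x_0)$, coincide exactly on the genuine hyperplane $L=\{A_1=A_2\}=\{\langle\,\cdot-x_0,\,D\pi_1(x_0)-D\pi_2(x_0)\rangle=0\}$, have distinct gradients at $x_0$, and (by the transversality arranged above) satisfy $L\cap B_r(x_0)\cap\Omega\neq\emptyset$. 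This is precisely a V-shaped touching in the sense of Definition~\ref{eq:touch-v}, contradicting NV.

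I expect the main obstacle to be the third step: guaranteeing a boundary contact point on the ridge at which the coincidence hyperplane is transversal to $\partial\Omega$. The dangerous degenerate configuration is $D\pi_1(x_0)-D\pi_2(x_0)$ being normal to $\partial\Omega$, which forces $L$ to be tangent and hence, by strict convexity, disjoint from $\Omega$, so that the condition $L\cap\Omega\neq\emptyset$ fails. Ruling this out — in the same spirit as the computation in Lemma~\ref{comp} and Lemma~\ref{difNV}, where the normality of the gradient difference is what makes $L$ tangent — is the technical heart of the argument, and is exactly where strict convexity of $\Omega$ is indispensable.
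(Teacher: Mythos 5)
Your overall strategy --- reduce two distinct supporting $\alpha$-hyperplanes $\pi_1\neq\pi_2$ at $z_0$ to a V-shaped touching of $g$ along the ridge $R=\{\pi_1=\pi_2\}$ --- matches one half of the paper's argument, and your handling of the tangency/transversality issue for the coincidence hyperplane is on target. But there is a genuine gap at your third step: nothing you establish guarantees a point $x_0\in R\cap\partial\Omega$ with $\pi_1(x_0)=\pi_2(x_0)=g(x_0)$. Your lifting principle, applied to $w=\max\{\pi_1,\pi_2\}$, only produces \emph{some} boundary point where $w=g$, i.e.\ where at least one of the two hyperplanes touches $g$; that point has no reason to lie on the ridge, and a contact point off the ridge does not create a V. It is perfectly consistent with everything you have proved that $g>\pi_1=\pi_2$ strictly at every point of $R\cap\partial\Omega$ (on the common domain of definition), in which case no V-shaped touching arises and NV is never contradicted.

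The paper closes exactly this case with a construction your proposal is missing: a \emph{third} $\alpha$-hyperplane $\pi$, with direction chosen as $k\rho=\tfrac12(C\nu+\tilde C\tilde\nu)$, so that $\pi=\pi_1=\pi_2$ on $R$ and $\pi<\max\{\pi_1,\pi_2\}$ strictly off $R$. If there is no ridge contact point, then $g>\pi$ on $R\cap\partial\Omega$ by assumption and $g\ge\max\{\pi_1,\pi_2\}>\pi$ off $R$, so by compactness of $\partial\Omega$ one gets $\pi+\eps\le g$ on all of $\partial\Omega$ for some $\eps>0$; then $\max\{\min g,\ \eps+\pi,\ \pi_1,\ \pi_2\}$ is an admissible $\alpha$-convex competitor whose value at $z_0$ is $u^*_\alpha(z_0)+\eps$, contradicting the maximality defining the envelope. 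In short, the correct proof is a dichotomy --- either a ridge contact point exists, contradicting NV, or it does not, contradicting maximality of $u^*_\alpha$ via the intermediate hyperplane --- and your proposal supplies only the first horn. Your lifting principle is the right mechanism for the second horn, but it must be applied to the intermediate hyperplane $\pi$, which does have a uniform gap below $g$ on $\partial\Omega$, and not to $w$, which does not.
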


\begin{proof}
The idea of the proof is as follows. 
If there are two supporting $\alpha$-hyperplanes, by the NV-property, we may find another $\alpha$-hyperplane such that the supremum of this $\alpha$-hyperplane and the two supporting $\alpha$-hyperplanes would give a strictly larger $\alpha$-convex envelope, a contradiction.

Observe that since $u^*_\alpha$ is $\alpha$-convex we know that there is at least one supporting $\alpha$-hyperplane by Theorem \ref{chess}.
For the sake of contradiction, suppose that there are two different ones at $z_0$.
That is we have 
\[
\pi_\alpha (z) = v(\langle z-z_0, \nu \rangle) 
\quad \text{and}\quad
\tilde\pi_\alpha (z) = \tilde v(\langle z-z_0, \tilde \nu \rangle)
\]
where $\nu,\tilde\nu\in \R^N$, $L_\alpha v= L_\alpha \tilde v=0$, $v(0)=\tilde v(0)=u^*_\alpha(z_0)$ and $\pi_\alpha, \tilde \pi_\alpha\leq u^*_\alpha$.

Using the explicit representation found in Lemma \ref{lemma-L-1d}, we write
\[
v(t)=\frac{1}{K_\alpha}\ln(1+Ct)+u^*_\alpha(z_0)
 \quad \text{and}\quad
\tilde v(t)=\frac{1}{K_\alpha}\ln(1+\tilde Ct)+u^*_\alpha(z_0).
\]
We can select $\nu$ and $\tilde\nu$ (possible by changing them for $-\nu$ and $-\tilde\nu$) such that $C,\tilde C\geq 0$.

Then, we can compute the set where $\pi_\alpha$ and $\tilde \pi_\alpha$ coincide, we have
\[
\begin{split}
\frac{1}{K_\alpha}\ln(1+C\langle z-z_0, \nu \rangle)+u^*_\alpha(z_0)
&=
\frac{1}{K_\alpha}\ln(1+\tilde C\langle z-z_0, \tilde \nu \rangle)+u^*_\alpha(z_0),
\\
C\langle z-z_0, \nu \rangle
&=
\tilde C\langle z-z_0, \tilde \nu \rangle,
\\
\langle z-z_0, C\nu-\tilde C\tilde \nu \rangle
&=0.
\end{split}
\]
Observe that $C\nu\neq\tilde C\tilde \nu$ (otherwise the functions $\pi_\alpha$ and $\tilde \pi_\alpha$ would be the same).

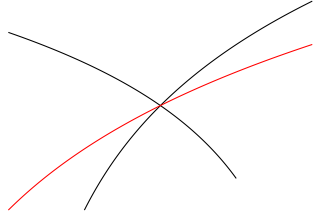
\begin{figure}
     \centering
         \centering
\begin{tikzpicture}[scale=2]
\draw plot [domain=0.5:2,samples=100] (\x,{ln(\x)});
\draw plot [domain=0:1.5,samples=100] (\x,{0.7*ln(2-\x)});
\draw [draw=red] plot [domain=0:2,samples=100] (\x,{ln(\x+1)-ln(2)});
\end{tikzpicture}
	\caption{The function $\pi$ in red.}
	\label{fig:pi}
\end{figure}

Now we construct a third $\alpha$-hyperplane $\pi$, see Figure~\ref{fig:pi}.
We want $\pi=\pi_\alpha=\tilde\pi_\alpha$ in $L$ and 
\[
\pi\leq\max\{\pi_\alpha,\tilde\pi_\alpha\}
\]
and we want the equality to hold only on $L$.

We consider 
\[
\pi(z)=V(\langle z-z_0, \rho \rangle)
\]
where $V(t)=\frac{1}{K_\alpha}\ln(1+kt)+u^*_\alpha(z_0)$, $k>0$ and $\rho$ is a unitary vector.

Observe that, since $\ln (\cdot)$ is a monotone function, the set where $\pi_\alpha$ is larger than $\tilde\pi_\alpha$ is given by 
\[
C\langle z-z_0, \nu \rangle\geq 
\tilde C\langle z-z_0, \tilde\nu\rangle\]
that is, 
\[
\langle z-z_0, C\nu-\tilde C\tilde \nu \rangle
\geq 
0.
\]
There we want to have that $\pi_\alpha$ is larger than $\pi$, and this set is given by $\langle z-z_0, C\nu-k \rho \rangle\geq 0$.
So we need to impose that $C\nu-k \rho=p(C\nu-\tilde C\tilde \nu)$ for some $p>0$.
In a similar way we get $\tilde C\tilde \nu-k \rho=q(\tilde C\tilde \nu-C\nu)$ for some $q>0$.
Thus we take $k \rho=\frac{\tilde C\tilde \nu+C\nu}{2}$ and the equalities hold for $p=q=1/2$.
Observe that since $C\nu\neq\tilde C\tilde \nu$ we have $k \rho \neq \tilde C\tilde \nu$ and $k \rho\neq \tilde C\tilde \nu$.
Therefore $\pi$ only coincide with the other two $\alpha$-hyperplanes in $L$ and we get that $\pi<\max\{\pi_\alpha,\tilde\pi_\alpha\}$ outside $L$. 

Let $x_0\in L\cap \partial\Omega$ such that both $\pi_\alpha$ and $\tilde \pi_\alpha$ are defined there.
If $g(x_0)=\pi_\alpha(x_0)=\tilde\pi_\alpha(x_0)$ this contradicts the NV-property: This contradiction can be obtained by observing that since the segment between $z_0$ and $x_0$ is contained in $\Omega$, the last part of the definition of V-shape touching is verified, $L\cap\Omega$ contains points arbitrarily close to $x_0$.
Thus, we have $g(x_0)>\pi_\alpha(x_0)=\tilde\pi_\alpha(x_0)$ and we can take $\eps>0$ small enough such that $\pi+\eps\leq g$ in a neighborhood of $x_0$.
Since $\partial\Omega$ is compact we can take $\eps>0$ small enough such that $\pi+\eps\leq g$ everywhere.

Finally, we consider $v=\max\{\min g,\eps+\pi,\pi_\alpha,\tilde\pi_\alpha\}$.
By the above construction we have $v\leq g$ everywhere and we reach a contradiction since $v(z_0)=\eps+u_\alpha^*(z_0)>u_\alpha^*(z_0)$.
\end{proof}

Next we prove Theorem~\ref{teo.C1.intro} stating that 
if the boundary datum $g$ is continuous and NV then the $\alpha$-convex envelope, $u^*_\alpha$, is $C^1(\Om)$.

\begin{proof}[Proof of Theorem~\ref{teo.C1.intro}.]
Let us start showing that $u^*_\alpha$ is differentiable inside
$\Omega$. From our previous result, 
Lemma \ref{lema-supporting-unique}, we know that the $\alpha$-convex envelope $u^*_\alpha$ has a unique supporting $\alpha$-hyperplane at every point.
Then, the natural candidate to be the tangent plane for $u^*_\alpha$
at $z \in \Omega$ is $u^*_\alpha (z) + \langle z-z_0,  D  \pi_z (z) \rangle$
with $\pi_z$ is the supporting $\alpha$-hyperplane at $z$. 
Since $\pi_z$ is differentiable at $z$ we have
$$
\begin{array}{l}
\displaystyle 
u^*_\alpha (x) - u^*_\alpha (z) - \langle x-z,  D  \pi_z (z) \rangle
\\[7pt]
\displaystyle \qquad \geq \pi_z (x) - \pi_z (z) - \langle x-z,  D  \pi_z (z) \rangle
= o (|x-z|)
\end{array}
$$
as $x \to z$.

To show that 
$$
\begin{array}{l}
\displaystyle 
u^*_\alpha (x) - u^*_\alpha (z) - \langle x-z,  D  \pi_z (z) \rangle
\leq o (|x-z|)
\end{array}
$$
as $x \to z$, we argue by contradiction. 
Assume that $\{x_n\}$ is such that $x_n\to z$ and
\begin{equation} \label{kkk}
u^*_\alpha (x_n) - u^*_\alpha (z) - \langle x_n-z,  D  \pi_z (z) \rangle
\geq c |x_n-z|.
\end{equation}
Then, take $\{\pi_{x_n}\}$ the sequence of supporting $\alpha$-hyperplanes at $x_n$.
 We have that
 \begin{equation} \label{eq.jy}
 u^*_\alpha (y) \geq \pi_{x_n} (y)\qquad \mbox{ and } \qquad
 u^*_\alpha (x_n) = \pi_{x_n} (x_n).
 \end{equation}
 Since $u^*_\alpha$ is Lipschitz we can extract a subsequence 
that we still denote by $\{\pi_{x_n}\}$ that converges uniformly in a neigbourhood
of $z$ to an $\alpha$-hyperplane
$\pi_0$ as $n \to \infty$. Passing to the limit in \eqref{eq.jy} we get
$$
u^*_\alpha (y) \geq \pi_{0} (y)\qquad \mbox{ and } \qquad
 u^*_\alpha (z) = \pi_{0} (z).
$$
Hence, the limit $\pi_0$ is a supporting $\alpha$-hyperplane at $z$.
But now we observe that, using \eqref{kkk}, we have $\pi_0 \neq \pi_z$
a contradiction with the fact that the supporting $\alpha$-hyperplane at $z$
is unique. 
This proves that
$$
\begin{array}{l}
\displaystyle 
u^*_\alpha (x) - u^*_\alpha (z) - \langle x-z,  D  \pi_z (z) \rangle
= o (|x-z|)
\end{array}
$$
as $x \to z$ and then $u^*_\alpha $ is differentiable at $z$ (and its 
gradient is just the gradient of the supporting $\alpha$-hyperplane at $z$). 

To end the proof we observe that the supporting $\alpha$-hyperplane 
changes continuously. 
To see this fact we can argue as before, take a convergent sequence $x_n \to z$ and let
$\{\pi_{x_n}\}$ be the sequence of supporting $\alpha$-hyperplanes at $x_n$. As before, we have that $\pi_{x_n}\to \pi_0$ (first along a subsequence and since the limit is unique eventually for all) and we conclude that the limit
$\pi_0 $ must be the supporting $\alpha$-hyperplane at $z$, $\pi_z$ (here we use again
the uniqueness result, Lemma  \ref{lema-supporting-unique}). This shows that the
supporting $\alpha$-hyperplane varies continuously: in particular observe that by the explicit form of supporting hyperplanes, $\pi_{x_n}\to \pi_0$ implies that also the gradients converge.

Now, we observe that the gradient of the $\alpha$-convex envelope
$u^*_\alpha$ coincides with the gradient of the supporting $\alpha$-hyperplane at $z$
that varies continuously and we conclude that $u^*_\alpha\in C^1(\Omega)$.
\end{proof}

\end{document}